\documentclass[a4paper, 11pt, reqno]{amsart}
\linespread{1.1}	

\usepackage[margin=2.3cm,vmargin= 4.3cm]{geometry}
\usepackage{graphicx}
\usepackage{amsmath}
\usepackage{amsthm,amsfonts,amssymb,mathrsfs,amscd,amstext,amsbsy}
\usepackage[all]{xy}
\usepackage{tikz}
\usepackage[english]{babel}
\usepackage{enumitem}
\usepackage{hyperref}
\usepackage{amsthm,leftidx}
\usepackage{subcaption}
\usepackage{arydshln}
\usepackage{multirow}
\usepackage{mathtools}

\hypersetup{
    pdftitle=   {Some techniques to compute a galois cohomology of an algebraic tori},
   pdfauthor=  {Seung Kyun Park}
}

\renewcommand{\theequation}{\thesection.\arabic{equation}}
\newcommand{\MyQuote}[1]{\vspace{.4cm}\refstepcounter{equation}%
    \hspace{0.2cm} \parbox{14.4cm}{#1}\hfill (\theequation)\vspace{.4cm}}
\newcommand{\norm}[3]{\text{N}_{#1 / #2}#3}
\newcommand{\brau}[2]{\text{Br}(#1|#2)}
\newcommand{\coho}[3]{H^{#1}(#2,#3)}
\newcommand{\spec}[1]{\text{Spec}\, #1}
\newcommand{\ind}[3]{\text{Ind}_{#1}^{#2}(#3)}

\newtheorem{thm}{Theorem}[section]
\newtheorem{prop}[thm]{Proposition}
\newtheorem{lem}[thm]{Lemma}
\newtheorem{exa}[thm]{Example}
\newtheorem{cor}[thm]{Corollary}
\newtheorem{remark}[thm]{Remark}

\newtheorem{defn}[thm]{Definition}
\numberwithin{equation}{section}

%\address{Department of Mathematical Sciences, KAIST, 291 Daehak-ro
%  Yuseong-gu Daejeon, 34141 South Korea}
%\email{phcmas@kaist.ac.kr}

%\providecommand{\keywords}[1]{\textbf{\textit{Index terms---}} #1}
\title{An algorithm for the classification of twisted forms of toric varieties}
\author{Seungkyun Park}
\address{Department of Mathematical Sciences, KAIST, 291 Daehak-ro Yuseong-gu Daejeon, 34141 South Korea}
\email{phcmas@kaist.ac.kr}

\begin{document}

\begin{abstract}
Let $K/k$ be a finite Galois extension, $G=\text{Gal}(K/k)$, $\Sigma$ be a fan in a lattice $N$ and $X_{\Sigma}$ be an associated toric variety over $k$. It is well known that the set of $K/k$-forms of $X_{\Sigma}$ is in bijection with $\coho{1}{G}{\text{Aut}_{\Sigma}^T}$, where $\text{Aut}_{\Sigma}^T$ is an algebraic group of toric automorphisms of $X_{\Sigma}$. In this paper, we suggest an algorithm to compute $\coho{1}{G}{\text{Aut}_{\Sigma}^T}$ and find that followings can be classified via this algorithm : $K/k$-forms of all toric surfaces, $K/k$-forms of all 3-dimensional affine toric varieties with no torus factor, $K/k$-forms of all 3-dimensional quasi-projective toric varieties when $K/k$ is cyclic.

\end{abstract}
\maketitle

\section{Introduction}
Let $k$ be an arbitrary field. In this paper, an algebraic variety over $k$ means a separated, geometrically integral scheme of finite type over $k$. Throughout this paper, let $K/k$ be a finite Galois extension, $G=\text{Gal}(K/k)$ be a Galois group and $X_K = X\otimes_{k} K$ be a scalar extension of a variety $X$. The following problem has been of interest for a long time.
\begin{quote}
Let $X$ be an algebraic variety over $k$. Can we classify all isomorphism classes of varieties $Y$ over $k$ such that $Y_K\simeq X_K$?
\end{quote}

The vareity $Y$ is called a $K/k$-form of $X$ or a twisted form of $X$. When $X$ is quasi-projective, the set of $K/k$-forms of $X$ is in bijection with $\coho{1}{G}{\text{Aut}(X_K)}$(\cite{serre}-III or Section 2.5). Hence, classifying $K/k$-forms of $X$ is reduced to compute $\coho{1}{G}{\text{Aut}(X_K)}$. Without any restriction of $X$, it is very difficult to compute $\coho{1}{G}{\text{Aut}(X_K)}$. We begin to observe when $X$ is a toric variety.

Let $\mathcal{T}$ be an algebraic torus over $k$ that splits over $K$. All $n$-dimensional algebraic tori can be classified by conjugacy classes of group homomorphisms $\varphi: G \rightarrow \text{Aut}_{\Sigma}\subset \text{GL}(n,\mathbb{Z})$. Denote by ${}_{\varphi}\mathcal{T}$ the torus corresponding to $\varphi$ and denote by ${}_{\varphi}\mathcal{T}(K)$ its scalar extension ${}_{\varphi}\mathcal{T}\otimes_k K$. A pair $(Y,\mathcal{T})$ is called an arithmetic toric variety over $k$ if $Y$ is a normal variety over $k$ and it has faithful $\mathcal{T}$-action which has a dense open orbits. When $\mathcal{T}$ is split, $X$ is called a split toric variety. All split toric varieties are classified by a fan $\Sigma$. Denote by $X_{\Sigma}$ the split toric variety over $k$ associated to $\Sigma$. Since any arithmetic toric variety is a twisted form of a split toric variety, the study of an arithmetic toric variety is reduced to the study of a twisted form of a split toric variety. 

In 2013, Elizondo, Lima-Filho, Sottile and Teitler studied twisted forms of split toric varieties \cite{elizondo}. Let $\text{Aut}_{\Sigma}^T$ be an algebraic group of toric automorphisms of $X_{\Sigma}$. With a result of Wlodarczyk \cite{wlodarczyk}, they proved that when $\Sigma$ is quasi-projective or $K/k$ is quadratic, the set of $K/k$-forms of $X_{\Sigma}$ is in bijection with $\coho{1}{G}{\text{Aut}_{\Sigma}^T}$. They succeed to write this Galois cohomology set in terms of $\coho{1}{G}{{}_{\varphi}\mathcal{T}(K)}$ and computed $\coho{1}{G}{{}_{\varphi}\mathcal{T}(K)}$ for all 2-dimensional torus ${}_{\varphi}\mathcal{T}$. Based on this computation, they classified real forms of all toric surfaces. Huruguen recently studied\cite{huruguen} the compactification of spherical orbits which is more general than arithmetic toric varieties. A spherical orbit of a connected reductive algebraic group $G$ over $k$ is a pair $(X_0,x_0)$, where $X_0$ is a homogeneous space for $G$ on which a Borel subgroup of $G$ has a dense orbit and $x_0$ is a $k$-rational point of $X_0$. Huruguen characterized situations where spherical embeddings over $k$ are classified by a combinatorial object called a Galois-stable colored fan. Moreover, he constructed an example of a smooth toric variety over $k$ under a 3-dimensional nonsplit torus, which admits no twisted form. Arithmetic toric varieties also play an important rule to study nonsplit tori via smooth projective compactifications. This work begans with Brylinski\cite{brylinski} who suggested the construction of a complete projective fan $\Sigma$ in a lattice $N$ that is invariant under the action of $G$ on $N$. In 1982, Voskresenski\u\i\cite{voskresenskii} showed there exist a smooth toric variety $Y$ over $k$ such that $Y_K \simeq X_{\Sigma}$ associated to that fan. Using this, Batryrev and Tschinkel\cite{batyrev} studied the distribution of rational points of bounded height on compactifications of nonsplit tori.

In this paper, we generalize the techniques of \cite{elizondo}. We suggest an algorithm to compute $H^1(G,$ $\text{Aut}_{\Sigma}^T)$ which is applicable regardless of the dimension of $X_{\Sigma}$. Whether this algorithm is applicable depends on the conjugacy class of $\text{Aut}_{\Sigma} \subset \text{GL}(n,\mathbb{Z})$, where $\text{Aut}_{\Sigma}$ is a subgroup of $\text{GL}(n,\mathbb{Z})$ preserving a fan $\Sigma$. For $n=2$ and $n=3$, Appendix A summarizes conjugacy classes of finite subgroups of $\text{GL}(n,\mathbb{Z})$ that can be applied this algorithm. All conjugacy classes of $\text{GL}(2,\mathbb{Z})$ and 61 cases among 73 conjugacy classes of $\text{GL}(3,\mathbb{Z})$ belongs to Appendix A. 

It is well-known that $\coho{1}{G}{\text{Aut}_{\Sigma}^T}$ is the same as a disjoint union of the quotients of $\coho{1}{G}{{}_{\varphi}\mathcal{T}(K)}$ by some subgroup of $\text{GL}(n,\mathbb{Z})$. For a more detailed description, see (\cite{elizondo},Theorem 3.4) or Theorem \ref{decent of toric} of this paper. Computing $\coho{1}{G}{{}_{\varphi}\mathcal{T}(K)}$ is thus the first step to obtain $\coho{1}{G}{\text{Aut}_{\Sigma}^T}$. In Section 3, we introduce theorems to compute $\coho{1}{G}{{}_{\varphi}\mathcal{T}(K)}$ and to find its generators. There are three main theorems of Section 3: Theorems \ref{induced 1}, \ref{induced 2} and \ref{main thm of 3.3}. These transform the problem of computing $\coho{1}{G}{{}_{\varphi}\mathcal{T}(K)}$ into the problem of checking the similarity of matrices over $\mathbb{Z}$. 

In Section 4, we construct an algorithm to compute $\coho{1}{G}{\text{Aut}_{\Sigma}^T}$. As an example, we show the application of the algorithm for the split toric variety $X_{\Sigma}$ with $\text{Aut}_{\Sigma}\simeq D_6$. And we compute the number of $K/k$-forms of $X_{\Sigma}$ for various field extension $K/k$. Finally, we discuss which $K/k$-forms of which toric varieties can be classified via this algorithm. We find that the followings can be classified via this algorithm. Besides the following lists, we expect that there exists other cases that can be applied this algorithm.
\begin{itemize}
\item[$\cdot$] $K/k$-forms of all toric surfaces for any finite Galois extension $K/k$
\item[$\cdot$] $K/k$-forms of all 3-dimensional quasi-projective toric varieties when $K/k$ is a cyclic extension 
\item[$\cdot$] $K/k$-forms of all 3-dimensional affine toric varieties with no torus factor for any finite Galois extension $K/k$.
\end{itemize}

\textbf{Acknowledgement.} I would like to thank my advisor Sanghoon Baek for his valuable comments. This work has been supported by a TJ Park Science Fellowship of POSCO TJ Park Foundation and a grant from the National Research Foundation of Korea(NRF) funded by the Ministry of Science ICT and Future Planning(2016R1C1B2010037).

\section{Preliminaries}

\subsection{Nonabelian Cohomology}
Let $G$ be a group and $A$ be a $G$-group. The 0-th nonabelian cohomology group is defined by $\coho{0}{G}{A} = A^{G} = \{a \in A \, | \, g\cdot a = a \,\,\text{for all}\,\, g\in G\}$, where $\cdot$ denotes the $G$-action on $A$. A 1-coycle is a map $g \mapsto c_g$ from $G$ to $A$ satisfying $c_{gh}=c_g \, (g\cdot c_h)$. Two 1-cocycles $c$ and $c^{\prime}$ are called cohomologous if there exists $a \in A$ such that $c^{\prime}_g = a^{-1}c_g (g \cdot a)$ for all $g \in G$. This is an equivalence relation on cocycles and we write $\coho{1}{G}{A}$ for the set of equivalence classes. The first nonabelian cohomology is a pointed set $\coho{1}{G}{A}$ with a distingushed element $\mathbf{1}_{g}=1$ for all $g \in G$. Let $B$ be a $G$-group and $A$ be a normal subgroup of $B$. If the $G$-action on $B$ preserves a normal subgroup $A$, then we have a long exact sequence as follows. 
\begin{equation}\label{les module}
1 \rightarrow \coho{0}{G}{A} \rightarrow \coho{0}{G}{B} \rightarrow \coho{0}{G}{B/A} \overset{\delta}\rightarrow
\coho{1}{G}{A} \rightarrow \coho{1}{G}{B} \rightarrow \coho{1}{G}{B/A}
\end{equation}
For $c \in (B/A)^G$, choose $b \in B$ wih $c=bA$. We define $\delta(c)$ as a 1-cocycle that sends $g$ to $b^{-1}(g\cdot b)$. When $A$ is abelian, the nonabelian cohomology is same as the usual group cohomology. Let $\cdots \rightarrow F_{i} \overset{d_{i+1}}\rightarrow \cdots \rightarrow F_0 \overset{d_1} \rightarrow \mathbb{Z}\rightarrow 0$ be the bar resolution as described in (\cite{weibel}, Chapter 6), here $F_i = \mathbb{Z}G \otimes \cdots \otimes \mathbb{Z}G$ ($i+1$ times). By taking $\text{Hom}(,A)$, we have a chain
$0 \rightarrow \text{Hom}_{\mathbb{Z}G}(F_0,A) \overset{d_1\,}\rightarrow \text{Hom}_{\mathbb{Z}G}(F_1,A) \overset{d_2\,}\rightarrow \cdots$. The $n$-th cohomology of this chain is denoted by $\coho{n}{G}{A}$. If $N$ is a normal subgroup of $G$, we get an exact sequence as follows.
\begin{equation}\label{les group}
1\longrightarrow \coho{1}{G/N}{A^N} \overset{\text{inf}}\longrightarrow \coho{1}{G}{A} \overset{\text{res}}\longrightarrow \coho{1}{N}{A}^{G/N} \overset{\text{tg}}\longrightarrow \coho{2}{G/N}{A^N} \overset{\text{inf}}\longrightarrow \coho{2}{G}{A}
\end{equation}

Consider when $G\simeq \mathbb{Z}/2\mathbb{Z}=\{e,g\}$ where $e$ is an identity. Let $\mathcal{U}\simeq \mathbb{Z}$ be an alternating $\mathbb{Z}/2\mathbb{Z}$-module. For any $\mathbb{Z}/2\mathbb{Z}$-module $A$, we have
\begin{align} \label{alternating}
\coho{i}{\mathbb{Z}/2\mathbb{Z}}{\mathcal{U} \otimes_{\mathbb{Z}} A} \simeq \coho{i+1}{\mathbb{Z}/2\mathbb{Z}}{A} \quad \text{for all $i\geq 0$}
\end{align}
This can be proved by applying $\text{Hom}(\cdot,\mathcal{U}\otimes A) \simeq \text{Hom}(\cdot \otimes \mathcal{U},A)$ to the free resolution 
\[\cdots \overset{e-g} \longrightarrow \mathbb{Z}[\mathbb{Z}/2\mathbb{Z}]  \overset{e+g} \longrightarrow \mathbb{Z}[\mathbb{Z}/2\mathbb{Z}] \overset{e-g}\longrightarrow \mathbb{Z}[\mathbb{Z}/2\mathbb{Z}] \longrightarrow \mathbb{Z} \longrightarrow 0\]

Now, we concentrate on when $G = \text{Gal}(K/k)$ where $K$ is a finite Galois extension over $k$. Suppose $G$ acts on $K^\times$ as a Galois group. Then, $\coho{1}{G}{K^\times}$ is trivial and it is called Hilbert's Theorem 90. And we have $\coho{2}{G}{K^\times} \simeq \brau{k}{K}$ (See \cite{central}, Section 4.4), where $\brau{k}{K}$ is a group of similarity classes of central simple $k$-algebras that splits over $K$. When $K/k$ is a cyclic extension, this relation gives $ \brau{k}{K} \simeq k^{\times}/\text{Im}\,\norm{K}{k}$, where $\norm{K}{k}$ is a Galois norm of $K/k$.

\subsection{Split Toric Scheme and Toric Automorphism}
This subsection is devoted to introduce the definition of toric scheme and toric automorphism refered to (\cite{elizondo}, Section 2.1, 2.2). Let $N$ be a lattice of finite rank with dual lattice $M = \text{Hom}(N,\mathbb{Z})$. A semigroup of the form $\mathbb{N}\mathcal{A}$ for some subset $\mathcal{A} \subset N$ is called an affine semigroup of $N$. A cone $\sigma$ is a finitely generated saturated affine  semigroup of $N$. The dual of $\sigma$ is defined by
$\sigma^{\vee} = \{ u \in M \, | \, u(v) \geq 0 \,\, \text{for all} \, v \in \sigma \}$. When 0 is a face of a cone $\sigma$, it is called strongly convex.

Each cone $\sigma \subset N$ gives an affine toric scheme $U_{\sigma} = \spec{\mathbb{Z}[\sigma^{\vee}]}$ where $\mathbb{Z}[\sigma^{\vee}]$ is a semigroup ring. A fan $\Sigma$ in $ N$ is a finite collection of strongly convex cones satisfying follows.
\begin{itemize}
\item[1.] For any $\sigma \in \Sigma$, each face of $\sigma$ is also in $\Sigma$.
\item[2.] For all $\sigma_{1},\sigma_{2} \in \Sigma$, the intersection $\sigma_{1} \cap \sigma_{2}$ is a face of each.
\end{itemize}
Under the condition of the fan, we can glue $U_{\sigma}$'s for all $\sigma \in \Sigma$ and get a scheme $X$ over $\mathbb{Z}$. Let $k$ be an arbitrary base field. Denote by $X_{\Sigma}$ the scheme $X\otimes_{\mathbb{Z}}k$. An affine algebraic group $\mathbb{T}_N = \spec{k[M]}$ is called a split torus over $k$. The split torus $\mathbb{T}_N$ acts on $U_{\sigma,k} = U_{\sigma}\otimes_{\mathbb{Z}} k$ by the ring map $k[\sigma^{\vee}]\rightarrow k[\sigma^{\vee}] \otimes_{k} k[M]$ that sends $u$ to $u\otimes u$. By glueing these morphisms, $\mathbb{T}_{N}$-action can be extended to $X_{\Sigma}$.

A toric automorphism of $X_{\Sigma}$ is a pair $(\varphi,\alpha)$ where $\varphi$ is a group automorphism of $\mathbb{T}_{N}$, $\alpha$ is an automorphism of $X_{\Sigma}$ that makes following diagram commutes. Since $\alpha$ maps torus orbits to torus orbits, $\varphi$ belongs to $\text{Aut}_{\Sigma}$ which is a subgroup of $\text{Aut}(N)$ that preserves a fan $\Sigma$.
\[\xymatrix{\mathbb{T}_{N} \times_{\text{Spec}\,k} X_{\Sigma} \ar[d]_{(\varphi,\alpha)}\ar[r]& X_{\Sigma} \ar[d]_{\alpha}\\
\mathbb{T}_{N} \times_{\text{Spec}\,k} X_{\Sigma} \ar[r]& X_{\Sigma}}
\]
Let $\mathbb{T}_N(k)$ be a set of $k$-valued points. Now, define an algebraic group $\text{Aut}_{\Sigma}^T$ whose $k$-valued point is $\text{Aut}_{\Sigma}^T(k) = \mathbb{T}_N(k) \rtimes \text{Aut}_{\Sigma}$ with a multiplication $(t_1,\psi)(t_2,\varphi)=(t_1 \psi(t_2), \psi \, \circ \, \varphi)$. The $G$-action on $K$-valued points $\text{Aut}_{\Sigma}^T(K)$ is defined as $g \cdot (t,\varphi) = (g(t),\varphi)$ for all $g \in G$ and $\varphi \in \text{Aut}_{\Sigma}$.

\subsection{Descent Theory}
An arithmetic toric variety over $k$ is a pair $(Y,\mathcal{T})$, where $\mathcal{T}$ is a torus over $k$ and $Y$ is a normal variety over $k$ with a faithful action of $\mathcal{T}$ which has a dense orbits. A morphism from $(Y_1,\mathcal{T}_1)$ to $(Y_2,\mathcal{T}_2)$ is defined as a pair $(\varphi,\alpha)$, where $\varphi:\mathcal{T}_1 \rightarrow \mathcal{T}_2$ is a group morphism and $\alpha :Y_1 \rightarrow Y_2$ is a morphism of varieties that preserves a torus action. Let $K$ be a finite Galois extension of $k$ such that $\mathcal{T}$ splits over $K$. Then, $(Y_K,\mathcal{T}_K)$ is isomorphic to $(X_{\Sigma,K},\mathbb{T}_{N,K})$ for some fan $\Sigma$ in $N$. Therefore, the study of arithmetic toric varieties over $k$ reduced to the study of $K/k$-forms of split toric varieties. In this subsection, we keep the notation :  $G=\text{Gal}(K/k)$, $N$ is a lattice of rank $n$ and $\Sigma$ is a fan in $N$.

The set of isomorphism classes of $K/k$-forms of an $n$-dimensional split torus is in bijection with the set of conjugacy classes of group homomorphisms $\varphi: G \rightarrow \text{GL}(n,\mathbb{Z})$. Now, let ${}_{\varphi}\mathcal{T}$ denotes an algebraic torus over $k$ corresponds to a group homomorphism $\varphi:G \rightarrow \text{GL}(n,\mathbb{Z})$. As a $G$-module, its scalar extension ${}_{\varphi}\mathcal{T}(K)={}_{\varphi}\mathcal{T}\otimes_{\mathbb{Z}} K$ is isomorphic to $K^{\times n}$. For $g \in G$, $(k_1,\cdots,k_n) \in {}_{\varphi}\mathcal{T}(K)$ and $\varphi(g) = (a_{ij}) \in \text{GL}(n,\mathbb{Z})$, its $G$-action is described as follows.
\begin{align}\label{G-action}
g \cdot(k_{1},\cdots,k_{n}) &= ( \prod_{1\leq i \leq n} g(k_i)^{a_{1i}},\cdots, \prod_{1 \leq i \leq n} g(k_i)^{a_{ni}} )
\end{align}

\begin{exa}
In this example, we classify $\mathbb{C}/\mathbb{R}$-forms of $\mathbb{C}^{\times}$. Let $G =\operatorname{Gal}(\mathbb{C}/\mathbb{R}) = \{e,g \}$ where $e$ is an identity, $g$ is a conjugation and $\varphi: G \rightarrow \operatorname{Aut}(\mathbb{Z})=\{1,-1\}$ be a group homomorphism. If $\varphi(g)=-1$, the $G$-action on $\mathbb{C}^{\times}$ is given by $g \cdot a = \overline{a}^{-1}$ where $\overline{\phantom{a}}$ denotes conjugation. And we have $(\mathbb{C}^{\times})^{G} = S^1$. Similarly, we get $\mathbb{R}^\times$ when $\varphi$ is an identity. So, $S^1$ and $\mathbb{R}^\times$ are all 1-dimensional real tori.
\end{exa}

Now, let me introduce an action of $\text{GL}(n,\mathbb{Z})$ on ${}_{\varphi}\mathcal{T}(K)$. For any $T=(a_{ij}) \in \text{GL}(n,\mathbb{Z})$ and $(k_1,\cdots,k_n) \in {}_{\varphi}\mathcal{T}(K)$, define
\begin{align} \label{matrix action}
T * (k_{1},\cdots,k_{n}) &= ( \prod_{1\leq i \leq n} k_i^{a_{1i}},\cdots, \prod_{1 \leq i \leq n} k_i^{a_{ni}} )
\end{align} 
Take any 1-cocycle $c$ which is a representation of an element of $\coho{1}{G}{{}_{\varphi}\mathcal{T}(K)}$. Define a $\text{GL}(n,\mathbb{Z})$-action on $\coho{1}{G}{{}_{\varphi}\mathcal{T}(K)}$ by $(T * c)_g = T * c_g$ for all $g\in G$.

\begin{thm}\label{decent of toric}(\cite{elizondo}, Theorem 3.2, Theorem 3.4)
Let $\varphi:G\rightarrow \operatorname{Aut}_{\Sigma}\subset \operatorname{GL}(n,\mathbb{Z})$ be a group homomorphism and let $C_{\operatorname{Aut}_{\Sigma}}(\varphi(G))$ be a centralizer of the image $\varphi(G) \subset \operatorname{Aut}_{\Sigma}$ that acts on $\coho{1}{G}{{}_{\varphi}\mathcal{T}(K)}$ as the previous paragraph. Denote by $\coho{1}{G}{\leftidx{_\varphi}{\mathcal{T}}(K)}/C_{\operatorname{Aut}_{\Sigma}}(\varphi(G))$ the set of its orbits.  Suppose $K/k$ is a quadratic extension or $\Sigma$ is quasiprojective.  Then,
\begin{enumerate}
\item[(1)] $\coho{1}{G}{\operatorname{Aut}_{\Sigma}^T}$ is in bijection with the set of isomorphism classes of $K/k$-forms of $(X_{\Sigma},\mathbb{T}_N)$.
\item[(2)] $\coho{1}{G}{\operatorname{Aut}_{\Sigma}^{T}} \simeq  \coprod_{\varphi} \coho{1}{G}{\leftidx{_\varphi}{\mathcal{T}}(K)}/C_{\operatorname{Aut}_{\Sigma}}(\varphi(G))$ where $\varphi$ varies over representatives of conjugacy classes of $\varphi$.
\end{enumerate}
\end{thm}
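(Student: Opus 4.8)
The plan is to treat the two parts separately, since (1) is a descent-theoretic statement whereas (2) is a purely cohomological computation exploiting the semidirect product structure $\operatorname{Aut}_{\Sigma}^T(K) = \mathbb{T}_N(K) \rtimes \operatorname{Aut}_{\Sigma}$.

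For (1), I would invoke the general principle of Galois descent: the $K/k$-forms of a quasi-projective object are classified by the first nonabelian cohomology of its $K$-automorphism group. Applied to the pair $(X_{\Sigma},\mathbb{T}_N)$, the first step is to identify the automorphism group of $(X_{\Sigma,K},\mathbb{T}_{N,K})$ as an arithmetic toric variety with $\operatorname{Aut}_{\Sigma}^T(K)$; this follows from Section 2.2, since a toric automorphism is precisely a pair $(\varphi,\alpha)$ and such pairs assemble into $\mathbb{T}_N(K)\rtimes\operatorname{Aut}_{\Sigma}$. Granting effectivity of descent data, each $1$-cocycle produces a genuine $k$-form of the pair and conversely, and cohomologous cocycles give isomorphic forms. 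The only delicate point is effectivity when $X_{\Sigma}$ fails to be quasi-projective: here I would appeal to Wlodarczyk's theorem (as in the introduction) to reduce the gluing of a descent datum to the quasi-projective situation, the quadratic case being treatable directly. This is exactly where the hypothesis ``$\Sigma$ quasi-projective or $K/k$ quadratic'' is used.

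For (2), I would work with the split short exact sequence of $G$-groups $1 \to \mathbb{T}_N(K) \to \operatorname{Aut}_{\Sigma}^T(K) \to \operatorname{Aut}_{\Sigma} \to 1$, in which the quotient carries the trivial $G$-action because $g\cdot(t,\varphi)=(g(t),\varphi)$. Since the action on the finite group $\operatorname{Aut}_{\Sigma}$ is trivial, $\coho{1}{G}{\operatorname{Aut}_{\Sigma}}$ is the set of $\operatorname{Aut}_{\Sigma}$-conjugacy classes of homomorphisms $\varphi\colon G\to\operatorname{Aut}_{\Sigma}$, and the splitting makes $\coho{1}{G}{\operatorname{Aut}_{\Sigma}^T}\to\coho{1}{G}{\operatorname{Aut}_{\Sigma}}$ surjective. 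The core of the argument is to compute the fibre over a class $[\varphi]$ by the twisting technique for nonabelian cohomology. After modifying a cocycle by a coboundary I may assume its $\operatorname{Aut}_{\Sigma}$-component equals $\varphi$ exactly, so it has the form $g\mapsto(a_g,\varphi(g))$; the cocycle identity then forces $g\mapsto a_g$ to be a $1$-cocycle for the $G$-module obtained by twisting the abelian group $\mathbb{T}_N(K)$ by $\varphi$. The key computation is that this twisted module is precisely ${}_{\varphi}\mathcal{T}(K)$: conjugating $(t,1)$ by $(1,\varphi(g))$ and using the multiplication law of Section 2.2 yields the twisted action $g\ast_{\varphi}t = \varphi(g)\ast g(t)$, which is literally the $G$-action of \eqref{G-action} expressed through the matrix action \eqref{matrix action}.

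It then remains to identify the equivalence relation on the fibre. Two cocycles $(a_g,\varphi(g))$ and $(a'_g,\varphi(g))$ are cohomologous in $\operatorname{Aut}_{\Sigma}^T(K)$ via some $(s,\psi)$; the torus part $s$ realizes the coboundary relation inside ${}_{\varphi}\mathcal{T}(K)$, while the requirement that $\psi$ preserve $\varphi$ forces $\psi\in C_{\operatorname{Aut}_{\Sigma}}(\varphi(G))$, whose residual effect on $a_g$ is exactly the $\ast$-action of the centralizer on $\coho{1}{G}{{}_{\varphi}\mathcal{T}(K)}$. Hence the fibre over $[\varphi]$ is the orbit set $\coho{1}{G}{{}_{\varphi}\mathcal{T}(K)}/C_{\operatorname{Aut}_{\Sigma}}(\varphi(G))$, and assembling over representatives of conjugacy classes of $\varphi$ gives the claimed disjoint union. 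I expect the main obstacle to be the bookkeeping in this last step: checking that the centralizer action descends to a well-defined action on cohomology, and that normalizing the $\operatorname{Aut}_{\Sigma}$-component to a fixed $\varphi$ is possible and unique up to the centralizer, so that distinct conjugacy classes of $\varphi$ yield genuinely disjoint fibres. This is the heart of the twisting argument.
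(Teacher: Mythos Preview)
The paper does not supply its own proof of this theorem; it is quoted from \cite{elizondo} (Theorems~3.2 and~3.4 there) and used as a black box throughout. Your sketch is correct and is essentially the argument of \cite{elizondo}: part~(1) is Galois descent for the pair $(X_{\Sigma},\mathbb{T}_N)$ with effectivity supplied by quasi-projectivity or, in the quadratic case, by Wlodarczyk's embedding result; part~(2) is the standard fibre analysis for the split extension $1\to\mathbb{T}_N(K)\to\operatorname{Aut}_{\Sigma}^T(K)\to\operatorname{Aut}_{\Sigma}\to 1$ via twisting, identifying the twisted normal subgroup with ${}_{\varphi}\mathcal{T}(K)$ and the residual ambiguity in the fibre with the centralizer action.
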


\begin{prop}(\cite{elizondo}, Proposition 3.7)\label{computation reduce}
Let $L$ be an intermediate field $k \subset L \subset K$ which is fixed by $\ker{\varphi}$ and $\overline{\varphi}: G/\ker{\varphi} \simeq \operatorname{Gal}(L/k) \rightarrow \operatorname{Aut}_{\Sigma}\subset \operatorname{GL}(n,\mathbb{Z})$ be an injective group homomorphism induced by $\varphi$. Then, $\coho{1}{\operatorname{Gal}(L/k)}{\leftidx{_{\overline{\varphi}}}{\mathcal{T}}(L)} \simeq \coho{1}{\operatorname{Gal}(K/k)}{\leftidx{_\varphi}{\mathcal{T}}(K)}$.
\end{prop}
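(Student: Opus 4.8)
The plan is to apply the inflation–restriction exact sequence \eqref{les group} with $N = \ker\varphi$ regarded as a normal subgroup of $G = \operatorname{Gal}(K/k)$, taken with coefficients in the $G$-module $A = \leftidx{_\varphi}{\mathcal{T}}(K) \simeq K^{\times n}$. The whole argument then reduces to identifying the three modules occurring in that sequence and to killing the restriction term by Hilbert's Theorem 90.

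First I would unwind the $N$-action on $A$. Since $N = \ker\varphi$, the matrix $\varphi(g)$ is the identity for every $g \in N$, so substituting into \eqref{G-action} shows that $g \in N$ acts on $(k_1,\dots,k_n) \in K^{\times n}$ simply by $(g(k_1),\dots,g(k_n))$, the componentwise Galois action. Hence, as an $N = \operatorname{Gal}(K/L)$-module, $A$ is just $n$ copies of $K^\times$ with its natural Galois action. Two consequences follow. On the one hand, $A^N = (L^\times)^n$, because $L$ is the fixed field of $N$; and tracing through \eqref{G-action} once more shows that the residual $G/N \simeq \operatorname{Gal}(L/k)$-action on $(L^\times)^n$ is exactly the one defining $\leftidx{_{\overline\varphi}}{\mathcal{T}}(L)$, since $\overline\varphi(\bar g) = \varphi(g)$ and $g$ acts on $L^\times$ through its restriction to $L$ (note $L/k$ is Galois as $\ker\varphi \trianglelefteq G$). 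On the other hand, Hilbert's Theorem 90 gives $\coho{1}{N}{K^\times} = 1$, whence $\coho{1}{N}{A} \simeq \coho{1}{N}{K^\times}^n = 1$.

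With these identifications, the sequence \eqref{les group} reads
\[1 \longrightarrow \coho{1}{\operatorname{Gal}(L/k)}{\leftidx{_{\overline\varphi}}{\mathcal{T}}(L)} \xrightarrow{\ \operatorname{inf}\ } \coho{1}{G}{\leftidx{_\varphi}{\mathcal{T}}(K)} \xrightarrow{\ \operatorname{res}\ } \coho{1}{N}{A}^{G/N} = 1.\]
Exactness at the left forces $\operatorname{inf}$ to be injective, while exactness in the middle together with the vanishing of the restriction target forces $\operatorname{inf}$ to be surjective. Hence inflation is the desired isomorphism.

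I do not expect a serious obstacle, as the content is essentially bookkeeping of the $G$-module structure, the only nontrivial input being Hilbert's Theorem 90, already recorded in the preliminaries. The one point deserving genuine care is the second identification above, namely checking that the $\operatorname{Gal}(L/k)$-module $A^N$ coincides \emph{on the nose} with $\leftidx{_{\overline\varphi}}{\mathcal{T}}(L)$ and not merely abstractly, since it is precisely this compatibility that makes the inflation map land in the intended cohomology set.
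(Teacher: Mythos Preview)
Your argument is correct and is precisely the standard inflation--restriction proof one would expect: identify $A^{N}$ with $\leftidx{_{\overline\varphi}}{\mathcal{T}}(L)$, kill $\coho{1}{N}{A}$ via Hilbert's Theorem~90 applied componentwise, and read off the isomorphism from exactness of \eqref{les group}. The paper itself does not supply a proof of this proposition but simply cites \cite{elizondo}, Proposition~3.7; your write-up fills that gap cleanly, and the point you flag about the $G/N$-action on $A^{N}$ matching \eqref{G-action} for $\overline\varphi$ is exactly the check needed.
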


\noindent \textbf{Notations and conventions.} Throughout this paper, we keep with following notations.
\begin{enumerate}
\item[(1)] Let $K/k$ be a finite Galois extension and $G=\text{Gal}(K/k)$. And $\{g_i \, | \, i=1,\cdots,m\}$ is a set of generators of $G$.
\item[(2)] Let $N$ be the lattice of rank $n$ and $\Sigma$ be a fan in $N$. Denote by $X_{\Sigma}$ the split toric variety over $k$ associated to a fan $\Sigma$.
\item[(3)] Let $\text{Aut}_{\Sigma}$ be a subgroup of $\text{GL}(n,\mathbb{Z})$ which preserves a fan $\Sigma$ and $\text{Aut}_{\Sigma}^T$ be an algebraic group of toric automorphisms of $X_{\Sigma}$.
\item[(4)] Let $\varphi: G\rightarrow \text{Aut}_{\Sigma}\subset \text{GL}(n,\mathbb{Z})$ be a group homomorphism and ${}_{\varphi}\mathcal{T}$ be the torus corresponding to $\varphi$.
\end{enumerate}

\section{Galois Cohomology of Algebraic Tori}
Our objective of this paper is identifying the explicit form of $\coho{1}{G}{\text{Aut}_{\Sigma}^T}$ that classifies all isomorphism classes of $K/k$-forms of a quasi-projective toric variety $X_{\Sigma}$. By Theorem \ref{decent of toric}, this nonabelian cohomology set is same as $ \coprod_{\varphi} \coho{1}{G}{{}_{\varphi}\mathcal{T}(K)}/C_{\text{Aut}_{\Sigma}}(\varphi(G))$. So, it is enough to compute each summand $\coho{1}{G}{{}_{\varphi}\mathcal{T}(K)}/C_{\text{Aut}_{\Sigma}}(\varphi(G))$. When we begin this work, the most difficult part is identifying $\coho{1}{G}{{}_{\varphi}\mathcal{T}(K)}$. In this section, we develop techniques to identify $\coho{1}{G}{{}_{\varphi}\mathcal{T}(K)}$.

\subsection{A Galois cohomology of 1-dimensional tori}
Let $\mathcal{U}=\mathbb{Z}$ be an integer ring with a $G$-action. Define a $G$-module $\mathcal{U}\otimes_{\mathbb{Z}} K^{\times}$ whose $G$-action is given by the simultaneous action of $G$ on $\mathcal{U}$ and its action on $K^{\times}$ as a Galois group. Via the identification $\mathcal{U}\otimes K^{\times} \simeq K^{\times}$ that sends $a\otimes k$ to $k^a$, $\mathcal{U}\otimes K^\times$ is considered as a 1-dimensional split torus $K^\times$. For each $g\in G$, we can write $g\cdot 1 = (-1)^i \in \mathcal{U}$ for some $i \in \{0,1\}$. Then, a $G$-action for $\mathcal{U}\otimes K^\times \simeq K^\times$ is described as follows.
\begin{align}\label{1-dim tori}
 g \cdot k = g(k)^{(-1)^i} \,\, \text{where}\,\, g\in G,\, k \in K^\times
\end{align}

Consider a subgroup $I=\{g \in G \, | \, g \cdot 1 = 1 \, \text{in} \,\, \mathcal{U}\,\}$ of $G$ which is a collection of elements of $G$ that acts on $\mathcal{U}\otimes K^\times \simeq K^{\times}$ as a Galois group.  The generalization of this gives us following.

\begin{defn}\label{def G-int} Let $G$ be a finite Galois group.
\begin{enumerate}
\item If $G$ acts on $\mathcal{U}=\mathbb{Z}$, we say $\mathcal{U}$ as a $G$-integer.
\item For a $G$-integer $\mathcal{U}$, $I = \{g \in G \, | \, g \cdot 1 =1 \,\,\text{in}\,\,\mathcal{U}\,\}$ is called a canonical subgroup with respect to $\mathcal{U}$.
\end{enumerate}
\end{defn}

These definitions is useful for stating theorems, especially in Subsection 3.2. Since every 1-dimensional torus ${}_{\varphi}\mathcal{T}(K)$ is isomorphic to $\mathcal{U}\otimes K^{\times}$ for some $G$-integer $\mathcal{U}$, Theorem \ref{1-dim} gives an answer for a Galois cohomology of 1-dimensional tori.

\begin{thm}\label{1-dim}
For a $G$-integers $\mathcal{U}$, we have $\coho{1}{G}{\mathcal{U} \, \otimes \, K^\times} \simeq \operatorname{Br}(k|K^I)$ where $I$ is the canonical subgroup with respect to $\mathcal{U}$.
\end{thm}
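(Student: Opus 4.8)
The plan is to reduce the whole computation to the inflation--restriction sequence \eqref{les group} combined with the degree-shifting isomorphism \eqref{alternating}. The first observation is that the canonical subgroup $I$ is normal in $G$: by Definition \ref{def G-int} it is exactly the kernel of the homomorphism $G \to \operatorname{Aut}(\mathcal{U}) = \{\pm 1\}$ that encodes the $G$-integer structure, so $[G:I]\in\{1,2\}$ and $K^I/k$ is Galois with group $G/I$. When $I=G$ the statement is immediate: the $G$-action on $\mathcal{U}$ is trivial, so $\mathcal{U}\otimes K^\times\simeq K^\times$ with its Galois action, and Hilbert's Theorem 90 gives $\coho{1}{G}{\mathcal{U}\otimes K^\times}=1$, which matches $\brau{k}{K^G}=\brau{k}{k}=1$. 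For the remaining case $[G:I]=2$, I would apply \eqref{les group} with the normal subgroup $I$ and the module $A=\mathcal{U}\otimes K^\times$. Restricted to $I$ the action on $\mathcal{U}$ is trivial by the very definition of $I$, hence $A\simeq K^\times$ as $I$-modules and $\coho{1}{I}{A}=\coho{1}{I}{K^\times}=1$ by Hilbert 90 for the extension $K/K^I$. Thus the restriction map in \eqref{les group} is the zero map, and inflation yields an isomorphism $\coho{1}{G}{\mathcal{U}\otimes K^\times}\simeq\coho{1}{G/I}{(\mathcal{U}\otimes K^\times)^I}$.

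The next step is to pin down the $G/I$-module $(\mathcal{U}\otimes K^\times)^I$. Since $I$ fixes the generator of $\mathcal{U}$ and $(K^\times)^I=(K^I)^\times$, one gets $(\mathcal{U}\otimes K^\times)^I\simeq \mathcal{U}\otimes (K^I)^\times$ as $G/I$-modules. The crux is to identify the induced $G/I$-action on $\mathcal{U}$: because $I$ is precisely the kernel of the sign homomorphism, $G/I\simeq\mathbb{Z}/2\mathbb{Z}$ acts on $\mathcal{U}\simeq\mathbb{Z}$ by $-1$, that is, $\mathcal{U}$ is exactly the alternating module appearing in \eqref{alternating}. This is the point I expect to require the most care, since it is what makes the degree-shifting isomorphism applicable.

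With this identification, \eqref{alternating} applied to the $\mathbb{Z}/2\mathbb{Z}$-module $(K^I)^\times$ gives
\[
\coho{1}{G/I}{\mathcal{U}\otimes (K^I)^\times}\simeq \coho{2}{G/I}{(K^I)^\times},
\]
and the Brauer-group identification recalled in Section 2.1, applied to the Galois extension $K^I/k$, gives $\coho{2}{G/I}{(K^I)^\times}\simeq\brau{k}{K^I}$. Composing the three isomorphisms proves the theorem. Everything other than the alternating-module identification is a direct appeal to Hilbert 90, inflation--restriction, and the quoted isomorphism $\coho{2}{\operatorname{Gal}(L/k)}{L^\times}\simeq\brau{k}{L}$; the only genuinely delicate verification is that $(\mathcal{U}\otimes K^\times)^I$ carries the alternating $G/I$-action, so that \eqref{alternating} may be invoked.
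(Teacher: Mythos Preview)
Your proof is correct and follows essentially the same route as the paper: split into the cases $I=G$ and $[G:I]=2$, use Hilbert 90 and inflation--restriction to reduce to $\coho{1}{G/I}{\mathcal{U}\otimes (K^I)^\times}$, identify $\mathcal{U}$ as the alternating $\mathbb{Z}/2\mathbb{Z}$-module, and then apply the degree-shift \eqref{alternating} together with $\coho{2}{G/I}{(K^I)^\times}\simeq\brau{k}{K^I}$. The paper additionally records the explicit cocycle representing each Brauer class, but otherwise the arguments coincide.
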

\begin{proof}
We treat $\mathcal{U}\otimes K^\times$ as a 1-dimensional split torus with the $G$-action defined as (\ref{1-dim tori}). When $I=G$, $G$ acts on $\mathcal{U}\otimes K^\times$ as a Galois group. By Hilbert's theorem 90, $\coho{1}{G}{\mathcal{U}\otimes K^\times}$ is trivial and this is same as $\brau{k}{K^G}\simeq1$. Now, consider when $I$ is a proper subgroup of $G$.
From the exact sequence (\ref{les group}), we have
\[1 \longrightarrow \coho{1}{G/I}{\mathcal{U}\otimes K^{\times I}} \overset{\text{inf}} \longrightarrow\coho{1}{G}{\mathcal{U}\otimes K^\times} \overset{\text{res}} \longrightarrow \coho{1}{I}{\mathcal{U}\otimes K^\times}\] 

Since $I$ acts on $\mathcal{U}\otimes K^\times$ as a Galois group,  $\coho{1}{I}{\mathcal{U}\otimes K^\times}$ is trivial by Hilbert's Theorem 90. By above sequence, we have $\coho{1}{G/I}{\mathcal{U}\otimes K^{\times I}} \simeq \coho{1}{G}{\mathcal{U}\otimes K^\times}$. Let $\phi:G \rightarrow \mathbb{Z}_2\simeq \{1,-1\}$ be a group homomorphism defined by $\phi(g) = g \cdot 1$ where $\cdot$ denotes a $G$-action on $\mathcal{U}$. It is easy to check that $\phi$ is surjective map whose kernel is equal to $I$. By applying (\ref{alternating}), we get follows.
\[\coho{1}{G}{\mathcal{U}\otimes K^\times} \simeq \coho{1}{\mathbb{Z}_2}{\mathcal{U}\otimes K^{\times I}} \simeq \coho{2}{\mathbb{Z}_2}{K^{\times I}} \simeq \brau{k}{K^I}
\]
Note that $K^I$ is a field extension of $k$ with degree $\leq 2$. For $a \in k^\times$,  $\overline{a}$ denotes the representation of an element of $\brau{k}{K^I}=k^\times/\text{Im}\,\norm{K^I}{k}$. Each $\overline{a}^{-1} \in \brau{k}{K^I}$ corresponds to a 1-cocycle $c \in \coho{1}{G}{\mathcal{U}\otimes K^\times}$ such that $c_{g} = a$ for $g\notin I$ and $c_{g}=1$ for $g\in I$.
\end{proof}

Now, consider the case ${}_{\varphi}\mathcal{T}(K)$ has dimension $\geq 2$, say its dimension is $n$. By Proposition \ref{computation reduce}, the computation of $\coho{1}{G}{{}_{\varphi}\mathcal{T}(K)}$ is reduced to handle when $\varphi$ is injective. Throughout Subsection 3.2 and 3.3, we assume $\varphi$ is injective and $G$ is a subgroup of $\text{GL}(n,\mathbb{Z})$. In other words, every $g \in G$ is also considered as a matrix $\varphi(g) \in \text{GL}(n,\mathbb{Z})$. Now, our problem can be reduced as follows.

\MyQuote{ How can we compute $\coho{1}{G}{{}_{\varphi}\mathcal{T}(K)}$ and find its generators with the assumption that $G$ is a subgroup of $\text{GL}(n,\mathbb{Z})$?
}\label{problem}

There are three main results : Theorems \ref{induced 1}, \ref{induced 2} and \ref{main thm of 3.3}. These transform the problem (\ref{problem}) into the problem of checking similarlity of matrices over $\mathbb{Z}$. Let me introduce one more definition.

\begin{defn}
Let $A = \{A_i \, | \, 1\leq i \leq t\}$ and $B= \{B_i \, | \, 1\leq i \leq t\}$ be two sets of $n\times n$ matrices over $\mathbb{Z}$. We say $A$ is simultaneously similar to $B$ if there exist $T \in \text{GL}(n,\mathbb{Z})$ such that $T^{-1} A_i T = B_i$ for all $1\leq i \leq t$. 
\end{defn}

\subsection{Induced module and Split torus}
For a subgroup $N$ of $G$ and an $N$-integer $\mathcal{U}$, we define $\ind{N}{G}{\mathcal{U} \otimes K^\times} = \mathbb{Z}G \otimes_{\mathbb{Z}N} \mathcal{U} \otimes_{\mathbb{Z}} K^\times $ with a $G$-action as follows.
\[g \cdot (x\otimes y\otimes z) = gx \otimes y \otimes z, \,\,\, g, x \in G, \,\, y \in \mathcal{U}, \,\, z \in K^\times
\]

\noindent Note that $\coho{1}{G}{\ind{N}{G}{\mathcal{U}\otimes K^\times}}$ can be computed completely. More concretely, it reduced to $\coho{1}{N}{\mathcal{U}\otimes K^\times}$ by Shapiro's Lemma and this is obtained by Theorem \ref{1-dim}. So, we can ask `Can we write $\coho{1}{G}{{}_{\varphi}\mathcal{T}(K)}$ in terms of $\coho{1}{G}{\ind{N}{G}{\mathcal{U}\otimes K^\times}}$?'. This question is reduced to reveal the relation  between ${}_{\varphi}\mathcal{T}(K)$ and $\text{Ind}_N^G(\mathcal{U}\otimes K^\times)$. Before starting this work, we need follows.

\begin{defn}\label{def:operator}
Let $N$ be a subgroup of $G$ and $\mathcal{U}$ be an $N$-integer. For each $g \in G$, we define a linear operator as follows.
\begin{align*}
T_g(N,\mathcal{U}) : \operatorname{Ind}_{N}^{G}(\mathcal{U}) \rightarrow \operatorname{Ind}_{N}^{G}(\mathcal{U}), \quad x\otimes y \rightarrow gx \otimes y 
\end{align*}
\end{defn}

When $N$ and $\mathcal{U}$ can be known by the context, we just write $T_g(N,\mathcal{U})$ as $T_g$. Let $t$ be the index of $N$, $A$ be an element of $\text{GL}(t,\mathbb{Z})$ and $S$ be an ordered $\mathbb{Z}$-basis for $\ind{N}{G}{\mathcal{U}}$. By considering $T_g$ as its matrix representation with respect to $S$, it is possible to check whether $T_g$ is similar to $A$. Since it does not depends on the choice of $S$, this similarity is well-defined. Our first question is when ${}_{\varphi}\mathcal{T}(K)$ is isomorphic to $\ind{N}{G}{\mathcal{U}\otimes K^\times}$ as a $G$-module.

\begin{thm}\label{induced 1}
For a subgroup $N$ of $G$ and an $N$-integer $\mathcal{U}$, suppose that $\{g_i\,|\,i=1,\cdots,m\}$ is simultaneously similar to $\{T_{g_i}(N,\mathcal{U})\,|\,i=1,\cdots,m\}$. Then,
\begin{enumerate}
\item  ${}_{\varphi}\mathcal{T}(K) \simeq \operatorname{Ind}_{N}^G (\mathcal{U}\otimes K^\times)$ as $G$-modules.
\item $\coho{1}{G}{{}_{\varphi}\mathcal{T}(K)} \simeq \operatorname{Br}(K^N|K^I)$ where $I$ is a canonical subgroup with respect to $\mathcal{U}$.
\end{enumerate}
\end{thm}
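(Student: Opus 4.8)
The plan is to prove (1) by exhibiting an explicit isomorphism of $G$-modules, and then to deduce (2) at once from Shapiro's lemma together with Theorem \ref{1-dim}. The starting point for (1) is to read off from the $G$-action (\ref{G-action}) that ${}_{\varphi}\mathcal{T}(K)$ is, as a $G$-module, the tensor product $L\otimes_{\mathbb{Z}}K^{\times}$, where $L=\mathbb{Z}^{n}$ carries the action $g\mapsto\varphi(g)$ and $G$ acts diagonally, $g\cdot(v\otimes z)=\varphi(g)v\otimes g(z)$ with $g(z)$ the Galois action on $K^{\times}$. Thus it suffices to produce a chain of $G$-module isomorphisms
\[
{}_{\varphi}\mathcal{T}(K)=L\otimes_{\mathbb{Z}}K^{\times}\;\cong\;\operatorname{Ind}_{N}^{G}(\mathcal{U})\otimes_{\mathbb{Z}}K^{\times}\;\cong\;\operatorname{Ind}_{N}^{G}(\mathcal{U}\otimes K^{\times}).
\]

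First I would convert the hypothesis into a statement about lattices. By Definition \ref{def:operator} the operators $T_{g}(N,\mathcal{U})$ are precisely the matrices of the left-translation $G$-action on $\operatorname{Ind}_{N}^{G}(\mathcal{U})$ with respect to the chosen basis $S$, so $g\mapsto T_{g}$ is an integral representation of $G$ on $\operatorname{Ind}_{N}^{G}(\mathcal{U})$ of degree $n$. The assumption that $\{g_{i}\}$ is simultaneously similar to $\{T_{g_{i}}\}$ provides a single $C\in\operatorname{GL}(n,\mathbb{Z})$ with $C^{-1}\varphi(g_{i})C=T_{g_{i}}$ for all generators; since both sides are homomorphisms and the $g_{i}$ generate $G$, the relation holds for every $g\in G$, so $C$ is an isomorphism of $G$-modules $L\xrightarrow{\sim}\operatorname{Ind}_{N}^{G}(\mathcal{U})$. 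Tensoring with $K^{\times}$ and checking diagonal equivariance, which is automatic once $C$ intertwines $\varphi$ and $T$, yields the first isomorphism above.

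The second isomorphism is the projection formula (tensor identity) $\operatorname{Ind}_{N}^{G}(\mathcal{U})\otimes_{\mathbb{Z}}K^{\times}\cong\operatorname{Ind}_{N}^{G}(\mathcal{U}\otimes_{\mathbb{Z}}K^{\times})$, and this is where I expect the only real subtlety to lie. The obstacle is that the naive coset-wise identification is \emph{not} $G$-equivariant: fixing left coset representatives $r_{1},\dots,r_{t}$ of $N$ in $G$ and a generator $u_{0}$ of $\mathcal{U}$, and writing $gr_{j}=r_{\sigma(j)}n_{j}$ with $n_{j}\in N$, the map $(r_{j}\otimes u_{0})\otimes z\mapsto r_{j}\otimes u_{0}\otimes z$ produces the Galois element $n_{j}(z)$ on the induced side against $g(z)$ on the diagonal side. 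The fix I would use is to twist by the coset representatives, sending $(r_{j}\otimes u_{0})\otimes z\mapsto r_{j}\otimes u_{0}\otimes r_{j}^{-1}(z)$; a direct check using the identity $n_{j}r_{j}^{-1}=r_{\sigma(j)}^{-1}g$ then shows the two actions agree (the sign $\pm1$ by which $n_{j}$ acts on $\mathcal{U}$ appearing symmetrically as an exponent on $K^{\times}$), so this twisted map is the desired $G$-isomorphism. Chaining the two isomorphisms proves (1).

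For (2), Shapiro's lemma gives $\coho{1}{G}{\operatorname{Ind}_{N}^{G}(\mathcal{U}\otimes K^{\times})}\cong\coho{1}{N}{\mathcal{U}\otimes K^{\times}}$, where $\mathcal{U}\otimes K^{\times}$ is viewed as an $N$-module. Applying Theorem \ref{1-dim} with $N=\operatorname{Gal}(K/K^{N})$ in place of $G$ and base field $K^{N}$ identifies the right-hand side with $\brau{K^{N}}{K^{I}}$, where $I\subseteq N$ is the canonical subgroup with respect to the $N$-integer $\mathcal{U}$ and $K^{I}\supseteq K^{N}$ has degree $\leq 2$. Combined with the $G$-isomorphism of (1), this yields $\coho{1}{G}{{}_{\varphi}\mathcal{T}(K)}\cong\brau{K^{N}}{K^{I}}$, as claimed.
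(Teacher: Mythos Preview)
Your proof is correct and follows essentially the same route as the paper: both factor the isomorphism through the intermediate $G$-module $\operatorname{Ind}_{N}^{G}(\mathcal{U})\otimes_{\mathbb{Z}}K^{\times}$ with diagonal action, use the simultaneous similarity to identify this with ${}_{\varphi}\mathcal{T}(K)$, and pass to $\operatorname{Ind}_{N}^{G}(\mathcal{U}\otimes K^{\times})$ via the coset-twisted map (the paper writes this as $e_{i}\otimes k\mapsto e_{i}\otimes t_{i}(k)$, which is precisely the inverse of your $(r_{j}\otimes u_{0})\otimes z\mapsto r_{j}\otimes u_{0}\otimes r_{j}^{-1}(z)$). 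Part (2) is then Shapiro plus Theorem~\ref{1-dim} in both proofs.
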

\begin{proof} 
Define a $G$-module $\ind{N}{G}{\mathcal{U}}\otimes K^\times$ with a $G$-action as follows.
\[g \cdot (x\otimes y\otimes z) = gx \otimes y \otimes g(z), \,\,\, g, x \in G, \,\, y \in \mathcal{U}, \,\, z \in K^\times
\]
Let $G/N=\{t_iN \, | \, i=1,\cdots,n \}$ for some $t_i \in G$. Then, a set $\{e_i = t_i \otimes 1 \,|\, i=1,\cdots,m\}$ is a $\mathbb{Z}$-basis for $\ind{N}{G}{\mathcal{U}}$. For each $k \in K^\times$, consider the map that sends $e_i \otimes k$ to $e_i \otimes t_i(k)$ from $\ind{N}{G}{\mathcal{U}\otimes K^\times}$ to $\ind{N}{G}{\mathcal{U}}\otimes K^\times$. It is easy to check this is a $G$-module isomorphism. \newline

\noindent(1) By the similarity condition, there exists a $\mathbb{Z}$-basis for $\ind{N}{G}{\mathcal{U}}$, denoted by $S=\{f_i\,|\,i=1,\cdots,m\}$, such that a matrix representation of $T_{g_i}$ with respect to $S$ is equal to $g_i \in \text{GL}(n,\mathbb{Z})$. In other words, we have $T_{g}(f_i) = g \cdot \, f_i = \sum_{j=1}^n a_{ji} f_j$ where $g=(a_{ij}) \in G\leq\text{GL}(n,\mathbb{Z})$ and $\cdot$ denotes the $G$-action defined as $(\ref{G-action})$. Now, we shall prove $\text{Ind}_N^G(\mathcal{U})\otimes K^{\times} \simeq {}_{\varphi}\mathcal{T}(K)$ as a $G$-module. For each $k_i \in K^\times$, define a group homomorphism $\psi: \text{Ind}_N^G(\mathcal{U})\otimes K^\times \rightarrow {}_{\varphi}\mathcal{T}(K)$ by $\psi(\sum_{i=1}^n f_i \otimes k_i) = (k_1,k_2,\cdots,k_n)$. Then,
\begin{align*}
\psi(g \cdot (\sum_{i=1}^n f_i \otimes k_i)) &= \psi(\sum_{i=1}^n (a_{1i}f_1+\cdots+a_{ni}f_n) \otimes g(k_i) ) = \psi(\sum_{i=1}^n f_i \otimes g(k_1)^{a_{i1}}g(k_2)^{a_{i2}}\cdots g(k_n)^{a_{in}}) \\
&= ( \prod_{1\leq i \leq n} g(k_i)^{a_{1i}},\cdots, \prod_{1 \leq i \leq n} g(k_i)^{a_{ni}} ) = g \cdot (k_1,\cdots,k_n)
\end{align*}
Hence, $\psi$ preserves the $G$-action. Since $\psi$ is bijective, it is a $G$-module isomorphism.
\newline \newline
\noindent (2) Now, we compute $\coho{1}{G}{{}_{\varphi}\mathcal{T}(K)}$ using Shaprio's Lemma as follows.
\begin{align}\label{isom of induced1}
\coho{1}{G}{{}_{\varphi}\mathcal{T}(K)} \simeq \coho{1}{G}{\text{Ind}_N^G(\mathcal{U})\otimes K^{\times}} \simeq \coho{1}{G}{\text{Ind}_N^G(\mathcal{U} \otimes K^{\times})} \simeq \coho{1}{N}{\mathcal{U}\otimes K^{\times}}
\end{align}
If we consider $N$ as $\text{Gal}(K/K^N)$, we get $\coho{1}{N}{\mathcal{U}\otimes K^{\times}} 
\simeq \brau{K^N}{K^{I}}$ by Theorem \ref{1-dim}
\end{proof}

\begin{exa}\label{induced 1 exa}
Suppose that $G\simeq D_6 \leq \operatorname{GL}(3,\mathbb{Z})$ and generated by following matrices.
\[\footnotesize{
\left\{r=\begin{pmatrix} 
0 & 1 & 0  \\ 
0 & 0 & 1 \\
1 & 0 & 0
\end{pmatrix},\,
s=\begin{pmatrix}
0 & 0 & -1 \\
0 & -1 & 0 \\
-1 & 0 & 0
\end{pmatrix} \right\}}
\]
Note that $r$ is a rotation and $s$ is a reflection of $D_6$. Take a subgroup $N=\{1,s\}$ and an $N$-integer $\mathcal{U}$ whose $N$-action is defined by $s \cdot 1 = -1$. Since $G/N=\{N,rN,r^2N\}$, the set $\{e_i=r^{i-1}\otimes 1 \, |\, i=1,2,3 \}$ is a $\mathbb{Z}$-basis for $\operatorname{Ind}_{N}^G(\mathcal{U})$. Then,
\begin{align*}
T_s(e_1) = s \otimes 1 = 1 \otimes s\cdot 1 = -e_1, \,\, T_s(e_2) = sr \otimes 1 = r^2 \otimes s\cdot 1 = -e_3,\, 
T_s(e_3) = - e_2
\end{align*}
With a similar calculation of $T_r$, matrix representations of $T_r$ and $T_s$ with respect to $\{e_i\,|\,i=1,2,3\}$ are obtained as follows.
\[\footnotesize{
T_r = \begin{pmatrix} 0&0&1 \\ 1&0&0 \\ 0&1&0 \end{pmatrix},\,\, T_s = \begin{pmatrix} -1&0&0 \\ 0&0&-1 \\ 0&-1&0 \end{pmatrix}, \,\, T= \begin{pmatrix} 0&1&0 \\ 1&0&0 \\ 0&0&1 \end{pmatrix}}
\] 
For a matrix $T\in \text{GL}(3,\mathbb{Z})$ defined as above, we have $T^{-1}rT=T_r$ and $T^{-1}sT = T_s$. By Theorem \ref{induced 1}, we have $\coho{1}{G}{{}_{\varphi}\mathcal{T}(K)} \simeq \operatorname{Br}(K^{\left<s\right>}|K)$.
\end{exa}

Although we have identified the explicit form of $\coho{1}{G}{{}_{\varphi}\mathcal{T}(K)}$, we don't know anything about its generators. Due to the complicacy of the isomorphism of Shapiro's Lemma, a deep analysis of this map is needed.

Let $N$ be the subgroup in Theorem \ref{induced 1} and $G/N = \{t_iN\,| \, i=1,\cdots,n\}$. If we consider each $T_{g_i}$ as its matrix representation with respect to $\{e_i = t_i\otimes 1 \, | \, i=1,\cdots,n\}$, the similarity condition can be written as $T^{-1}g_iT = T_{g_i}$ for some $T\in \text{GL}(n,\mathbb{Z})$. It is easy to see that the set of right coset of $N$ is same as $\{Nt_{i}^{-1} \, | \, i=1,\cdots,n\}$. For each $i\in \{1,\cdots,n\}$, there exists an unique $j\in \{1,\cdots,n\}$ such that $t_i^{-1}g \in Nt_{j}^{-1}$,  equivalently, $t_i^{-1}gt_j \in N$. For each $g\in G$ and $i \in \{1,\cdots,n\}$, we define an integer $b_i(g)$ as follows.
\begin{align}\label{integer}
b_i(g) = \begin{cases}
1 \quad \text{when}\,\,\, t_i^{-1}gt_j \notin I \\
0 \quad \text{when}\,\,\, t_i^{-1}gt_j \in I
\end{cases}
\end{align}

\begin{cor}\label{generator form}
Suppose that Theorem \ref{induced 1} holds. Then, $\coho{1}{G}{{}_{\varphi}\mathcal{T}(K)}$ is generated by $\{T\, *\, c(a) \, | \,a\in K^\times, \, g(a)=a \,\,\text{for all}\,\, g\in N \}$, where $*$ is an action as (\ref{matrix action}) and $c(a):G\rightarrow {}_{\varphi}\mathcal{T}(K)$ is defined as $c(a)_g = (t_1(a)^{b_1(g)},t_2(a)^{b_2(g)},\cdots,t_n(a)^{b_n(g)})$.
\end{cor}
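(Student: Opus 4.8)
The plan is to trace a generating family of $\brau{K^N}{K^I}$ backwards through the explicit chain of isomorphisms (\ref{isom of induced1}) that was used to establish Theorem \ref{induced 1}(2), and to read off the resulting representatives in $\coho{1}{G}{{}_{\varphi}\mathcal{T}(K)}$. Each map in that chain is a bijection of (abelian) cohomology groups, and $\brau{K^N}{K^I}$ is exhausted by the classes $\overline{a}^{-1}$ with $a$ ranging over $(K^N)^\times = \{a\in K^\times : g(a)=a \text{ for all } g\in N\}$; hence it suffices to compute the image of each $\overline{a}^{-1}$ and verify that it is represented by $T * c(a)$. Since ${}_{\varphi}\mathcal{T}(K)$ is abelian, this simultaneously shows these classes represent everything and therefore generate the group.

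First I would recall, applying Theorem \ref{1-dim} with $N=\operatorname{Gal}(K/K^N)$ in the role of $G$, that $\overline{a}^{-1}\in\brau{K^N}{K^I}\simeq\coho{1}{N}{\mathcal{U}\otimes K^\times}$ is represented by the $N$-cocycle $d(a)$ with $d(a)_h=a$ for $h\in N\setminus I$ and $d(a)_h=1$ for $h\in I$. Next I would push $d(a)$ forward under the inverse of the Shapiro isomorphism $\coho{1}{N}{\mathcal{U}\otimes K^\times}\xrightarrow{\sim}\coho{1}{G}{\ind{N}{G}{\mathcal{U}\otimes K^\times}}$. Using the right coset decomposition $G=\bigsqcup_i N t_i^{-1}$ set up just before the statement, the $e_i$-component (in the basis $e_i=t_i\otimes 1$) of the lifted $G$-cocycle evaluated at $g$ is $d(a)$ evaluated at the unique element $t_i^{-1}gt_j\in N$; by the definition (\ref{integer}) of $b_i(g)$ this value is precisely $a^{b_i(g)}$. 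This is the step that produces the combinatorial data $b_i(g)$, and keeping the left/right coset conventions consistent here is the delicate part of the bookkeeping.

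Then I would transport this cocycle through the two remaining $G$-module isomorphisms from the proof of Theorem \ref{induced 1}. The isomorphism $\ind{N}{G}{\mathcal{U}\otimes K^\times}\simeq\ind{N}{G}{\mathcal{U}}\otimes K^\times$ sending $e_i\otimes k\mapsto e_i\otimes t_i(k)$ replaces the $e_i$-coefficient $a^{b_i(g)}$ by $t_i(a)^{b_i(g)}$, which is where the Galois twists $t_i(a)$ enter. Finally, since $\psi$ was defined relative to the basis $S=\{f_i\}$ with $T^{-1}g_iT=T_{g_i}$, rewriting the cocycle in the $\{f_i\}$-basis before applying $\psi$ amounts to applying the change of basis $T$; under the identification ${}_{\varphi}\mathcal{T}(K)\simeq K^{\times n}$ this is exactly the operator $*$ of (\ref{matrix action}) applied valuewise. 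Composing the three steps yields the representative $T*c(a)$ with $c(a)_g=(t_1(a)^{b_1(g)},\ldots,t_n(a)^{b_n(g)})$, and letting $a$ range over $(K^N)^\times$ exhausts $\brau{K^N}{K^I}$, so the $T*c(a)$ generate $\coho{1}{G}{{}_{\varphi}\mathcal{T}(K)}$.

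The main obstacle is the explicit inverse Shapiro computation: one must write the lift of $d(a)$ in coordinates and check that the $i$-th coordinate acquires $a$ exactly when $t_i^{-1}gt_j\notin I$, matching (\ref{integer}). Everything else is a direct transcription of isomorphisms already constructed for Theorem \ref{induced 1}: the factor $t_i(a)$ comes from the module map $e_i\otimes k\mapsto e_i\otimes t_i(k)$, and the factor $T$ from the basis change defining $\psi$. Cocycle-ness requires no separate verification, since each step applies an isomorphism to a genuine cocycle.
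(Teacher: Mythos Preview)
Your proposal is correct and follows essentially the same route as the paper: start from the explicit $N$-cocycle $d(a)$ representing $\overline{a}^{-1}\in\brau{K^N}{K^I}$, push it through the inverse Shapiro map to land in $\coho{1}{G}{\ind{N}{G}{\mathcal{U}\otimes K^\times}}$, then apply the two module isomorphisms from the proof of Theorem \ref{induced 1} (the untwisting $e_i\otimes k\mapsto e_i\otimes t_i(k)$ and the basis change $T$) to read off $T*c(a)$. The only cosmetic difference is that the paper makes the Shapiro step concrete by writing down an explicit $N$-equivariant chain map $i_1:G_1\to N_1$ between bar resolutions (sending $t_i^{-1}\otimes g\mapsto 1\otimes n_i$ where $t_i^{-1}g=n_it_j^{-1}$) and then composing with the Frobenius-reciprocity isomorphism $\text{Hom}_{\mathbb{Z}N}(G_1,\mathcal{U}\otimes K^\times)\simeq\text{Hom}_{\mathbb{Z}G}(G_1,\ind{N}{G}{\mathcal{U}}\otimes K^\times)$, whereas you describe the same map in coordinate language directly; the resulting formula and the bookkeeping with the $b_i(g)$ are identical.
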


\begin{proof}
Remind the isomorphism $\coho{1}{G}{\ind{N}{G}{\mathcal{U}} \otimes K^\times} \simeq \coho{1}{N}{\mathcal{U}\otimes K^{\times}} \simeq \brau{K^{N}}{K^{I}}$ of (\ref{isom of induced1}).
For each $a \in K^\times$, $\overline{a}$ denotes the representation of an element of $\brau{K^N}{K^I}$. In this proof, we treat $\mathcal{U}\otimes K^\times$ as a 1-dimensional split torus $K^\times$ whose $G$-action is given by (\ref{1-dim tori}). By Theorem \ref{1-dim}, we see that $\overline{a}^{-1} \in \brau{K^{N}}{K^I}$ corresponds to a 1-cocycle $d \in \coho{1}{N}{\mathcal{U}\otimes K^\times}$ such that $d_n=a$ when $n \notin I$ and $d_n=1$ when $n \in I$. The difficult part is identifying an element of $\coho{1}{G}{\ind{N}{G}{\mathcal{U}} \otimes K^{\times}}$ that corresponds to a 1-cocycle $d$.

Let $G_i = \mathbb{Z}G \otimes_{\mathbb{Z}} \cdots \otimes_{\mathbb{Z}} \mathbb{Z}G$ and $N_i = \mathbb{Z}N \otimes_{\mathbb{Z}} \cdots \otimes_{\mathbb{Z}} \mathbb{Z}N$, in both case $\otimes$ is written $(i+1)$-times. Consider the two bar resolutions of $\mathbb{Z}$ as follows.
\[
\xymatrix{\cdots \ar[r]^{d_1} & G_1 \ar[r]^{d_0}\ar[d]^{i_1} & G_0 \ar[r]^{\text{aug}}\ar[d]^{i_0} &\mathbb{Z} \ar[r]\ar[d]^{=} &0 \\
\cdots \ar[r]^{d_1}& N_1 \ar[r]^{d_0}& N_0 \ar[r]^{\text{aug}} &\mathbb{Z} \ar[r] & 0 }
\]
Since every element of $G$ is written as $nt_{i}^{-1}$ for some $n \in N$ and $i \in\{1,\cdots,m\}$, we define a set map $i_0:G_0 \rightarrow N_0$ by $i_0(nt_{i}^{-1})=n$. By considering each $G_0$ and $N_0$ as a free $N$-module, $i_0$ can be extended to an $N$-module morphism. Now, fix an element $g \in G$ and $i \in \{1,\cdots,n\}$. We can write $t_i^{-1}g = n_i t_j^{-1}$ for some $n_i \in N$ and $j \in \{1,\cdots,n\}$. Define a set map by $i_1(t_i^{-1}\otimes g) = 1\otimes n_i$ and this can be extended to an $N$-module map which makes above diagram commute. By taking $\text{Hom}_{\mathbb{Z}N}(\cdot,\mathcal{U}\otimes K^\times)$, we have follows.
\[
\xymatrix{0 \ar[r] & \text{Hom}_{\mathbb{Z}N}(N_0, \mathcal{U}\otimes K^\times) \ar[r]^{d_0}\ar[d]^{i_0}& \text{Hom}_{\mathbb{Z}N}(N_1, \mathcal{U}\otimes K^\times) \ar[d]^{i_1}\ar[r]^{\quad\quad\quad\quad d_1} & \cdots \\
0 \ar[r] & \text{Hom}_{\mathbb{Z}N}(G_0, \mathcal{U}\otimes K^\times) \ar[r]^{d_0} & \text{Hom}_{\mathbb{Z}N}(G_1,\mathcal{U}\otimes K^\times) \ar[r]^{\quad\quad\quad\quad d_1} & \cdots
}
\]

Consider $\coho{1}{N}{\mathcal{U}\otimes K^\times}$ as the first cohomology from the above chain. Then, an element $f \in \text{Hom}_{\mathbb{Z}N}(N_1,\mathcal{U}\otimes K^{\times})$, defined by $f(1\otimes n) = d_n$ for all $n \in N$, is a representation of a 1-cocycle $d$. Let $F =f \circ i_1 \in \text{Hom}_{\mathbb{Z}N}(G_1, \mathcal{U}\otimes K^\times)$ and we have $F(t_{i}^{-1} \otimes g) = f(1 \otimes n_i)=d_{n_i}=a^{b_i(g)}$. Consider the isomorphism $\text{Hom}_{\mathbb{Z}N}(G_1, \mathcal{U} \otimes K^\times) \simeq \text{Hom}_{\mathbb{Z}G}(G_1,\text{Ind}_N^G(\mathcal{U})\otimes K^\times)$ that sends $F$ to $G$, where $G$ is defined by
\[G(1\otimes g) = \sum_{i=1}^{n} t_{i} \otimes t_i\big(F(t_{i}^{-1} \otimes g)\big) = \sum_{i=1}^{n} t_i \otimes t_i(a)^{b_i(g)}\]
By sending $\sum_{i=1}^n t_i \otimes k_i$ to $(k_1,\cdots,k_n)$, we can identify $\ind{N}{G}{\mathcal{U}}\otimes K^\times$ with ${}_{\varphi}\mathcal{T}(K)$. Then, $G$ corresponds to a 1-cocycle $c(a):G \rightarrow {}_{\varphi}\mathcal{T}(K)$ such that $c(a)_g = (t_1(a)^{b_1(g)},\cdots,t_n(a)^{b_n(g)})$. And $c(a)$ corresponds to $d$ via $\coho{1}{N}{\mathcal{U}\otimes K^{\times}} \simeq \coho{1}{G}{\ind{N}{G}{\mathcal{U}} \otimes K^\times}$. 

Let $\mathcal{B}=\{f_i\,|\,i=1,\cdots,t\}$ be a $\mathbb{Z}$-basis for $\mathbb{Z}^n$ such that a matrix representation of $T_{g_i}$ with respect to $\mathcal{B}$ is same as a matrix $g_i$. By considering the value of $c(a)$ as an element of $K^n\simeq\mathbb{Z}^n\otimes K$, we need to change the coordinate of $c(a)$ with respect to $K$-basis $\{e_i=f_i\otimes 1 \, | \, i=1,\cdots,n\}$. This work can be achived by multiplying $T$ on $\mathbb{Z}^n$, so $\{T *\, c(a)\,|\, g(a)=a\text{ for all}\,\, g\in N\}$ is a set of generators of $\coho{1}{G}{{}_{\varphi}\mathcal{T}(K)}$.
\end{proof}

\begin{exa}\label{generator exa}
In this example, we find generators of $\coho{1}{G}{{}_{\varphi}\mathcal{T}(K)}$ in Example \ref{induced 1 exa}. Note that $N=\{1,s\}$, $I$ is a trivial group and $G/N=\{N,rN,r^2N\}$. Use Corollary \ref{generator form} in the setting of $a_1=1$, $a_2=r$ and $a_3=r^2$. It is easy to check that $a_{1}^{-1}s a_1$, $a_{2}^{-1}sa_3$ and $a_{3}^{-1}sa_2$ are in $N$ but not in $I$. So, $b_1(s)=b_2(s)=b_3(s)=1$. Similarly, we get $b_1(r)=b_2(r)=b_3(r)=0$. For a nonzero $a \in K^N$, the 1-cocycle $c(a)$ of Corollary \ref{generator form} is defined by $c(a)_r=(1,1,1)$ and $c(a)_s=(a,a,a)$. Applying $T$-action as (\ref{matrix action}) on the 1-cocycle $c(a)$ induces the same cocycle. So, $\{c(a)\,|\, a\in K^\times \,\, \text{with}\,\, s(a)=a \}$ generates $\coho{1}{G}{{}_{\varphi}\mathcal{T}(K)}$.
\end{exa}

See Table 3,4 of Appendix A. When $n=2$ or $n=3$, we summarized a conjugacy class of $G$ in $\text{GL}(n,\mathbb{Z})$, a subgorup $N$ of $G$ and an $N$-integer $\mathcal{U}$ that satisfy Theorem \ref{induced 1}.  For these cases, $\coho{1}{G}{{}_{\varphi}\mathcal{T}(K)}$ can be computed as Example \ref{induced 1 exa} and \ref{generator exa}. Our next question is when ${}_{\varphi}\mathcal{T}(K)$ is a direct summand of $\ind{N}{G}{\mathcal{U}\otimes K^\times}$ as a $G$-module.

\begin{thm}\label{induced 2}
Let $\mathcal{U}$ be a $G$-integer, $N$ be a subgroup of $G$ of index $n+1$ and $I$ be a canonical subgroup with respect to $\mathcal{U}$. For each $g_i \in G$, we can write $g_i \cdot 1 = (-1)^{a_i}\in \mathcal{U}$ for some $a_i \in \{0,1\}$. Suppose that $\{T_{g_i}(N,\mathcal{U})\,|\,i=1,\cdots,m\}$ is simultaneously similar to follows. In lower right side of each $u_i\in \operatorname{GL}(n+1,\mathbb{Z})$, each $g_i\in \operatorname{GL}(n,\mathbb{Z})$ is located as a submatrix. Then,
\[\footnotesize{
\left\{\, u_1 = \begin{pmatrix}
(-1)^{a_1} & 0 & \cdots & 0 \\
* & \\
\vdots & & \Huge{g_1} \\
* & 
\end{pmatrix},
\cdots,
u_m = \begin{pmatrix}
(-1)^{a_m} & 0 & \cdots & 0 \\
* & \\
\vdots & & \Huge{g_m} \\
* & 
\end{pmatrix} \,
\right\}}
\]
\begin{enumerate}
\item There exists a $G$-module $M$ such that $\operatorname{Ind}_{N}^G(\,\mathcal{U}\otimes K^\times) \simeq M\oplus {}_{\varphi}\mathcal{T}(K)$ as a $G$-module.

\item When $N$ is a subgroup of $I$, $\coho{1}{G}{{}_{\varphi}\mathcal{T}(K)}$ is generated by a set of 1-cocycles $\{c(a) \, | \, a\in K^{\times} \,\,\text{with}$ $g_i (a)=a^{(-1)^{a_i}}\, \text{for all}\,\,i\,\}$. Moreover, $c(a)$ is defined as $c(a)_{g_i} = (a^{b_{1i}(-1)^{a_i}},a^{b_{2i}(-1)^{a_i}}$\\$, a^{b_{ni}(-1)^{a_i}} )$ where $b_{ji}$ as an $(j+1,1)$-th entry of matrix $r_i$. 
\end{enumerate}  
\end{thm}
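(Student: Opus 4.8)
The backbone of both parts is a short exact sequence of $G$-modules that one reads off from the block shape of the $u_i$. The plan is to exploit the simultaneous similarity to fix coordinates, tensor the resulting sequence of lattices with $K^\times$, and then feed it into the long exact cohomology sequence.

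First I would use the hypothesis to choose an ordered $\mathbb{Z}$-basis $\{f_0,f_1,\dots,f_n\}$ of $\ind{N}{G}{\mathcal{U}}$ in which each operator $T_{g_i}(N,\mathcal{U})$ is represented by $u_i$; this is exactly what simultaneous similarity provides, as in the proof of Theorem \ref{induced 1}. Since $u_i=\left(\begin{smallmatrix}(-1)^{a_i}&0\\ r_i&g_i\end{smallmatrix}\right)$ is block lower triangular with the $\varphi$-block $g_i$ in the lower right, the sublattice $L=\langle f_1,\dots,f_n\rangle$ is $G$-stable with $G$ acting through $\varphi$, while the quotient $\ind{N}{G}{\mathcal{U}}/L$ is generated by the image of $f_0$, on which $g_i$ acts by $(-1)^{a_i}$, hence is isomorphic to $\mathcal{U}$. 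This gives a short exact sequence of $G$-lattices $0\to L\to \ind{N}{G}{\mathcal{U}}\to \mathcal{U}\to 0$. As all three terms are $\mathbb{Z}$-free, tensoring with $K^\times$ stays exact; using the $G$-isomorphism $\ind{N}{G}{\mathcal{U}}\otimes K^\times\simeq \ind{N}{G}{\mathcal{U}\otimes K^\times}$ of Theorem \ref{induced 1} and identifying $L\otimes K^\times$ with ${}_{\varphi}\mathcal{T}(K)$ via (\ref{G-action}), I obtain
\begin{align}\label{plan-ses}
0\longrightarrow {}_{\varphi}\mathcal{T}(K)\longrightarrow \ind{N}{G}{\mathcal{U}\otimes K^\times}\overset{\pi}\longrightarrow \mathcal{U}\otimes K^\times\longrightarrow 0 .
\end{align}

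For part (1) it remains to split (\ref{plan-ses}) $G$-equivariantly and take $M$ to be the image of a section. Writing a candidate section as $1\otimes k\mapsto f_0\otimes k+\Lambda(k)$ with $\Lambda\colon K^\times\to{}_{\varphi}\mathcal{T}(K)$ a homomorphism, equivariance forces a cocycle identity on $\Lambda$ governed by the columns $r_i$, and the splitting amounts to the vanishing of the resulting obstruction in $\operatorname{Ext}^1_{\mathbb{Z}G}(\mathcal{U}\otimes K^\times,{}_{\varphi}\mathcal{T}(K))$. This is the step I expect to be the main obstacle: the underlying lattice sequence generally does \emph{not} split, since the integral system $(g_i-(-1)^{a_i})v=-r_i$ need not be solvable, so one cannot produce $M$ by a mere change of lattice basis. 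The construction of $\Lambda$ must genuinely use the Galois-module structure of $K^\times$ (a Hilbert~90 / normal-basis averaging argument), and verifying that the obstruction dies after tensoring with $K^\times$ is the delicate heart of the argument.

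For part (2), assume $N\subseteq I$ and apply $\coho{\bullet}{G}{-}$ to (\ref{plan-ses}):
\begin{align}\label{plan-les}
\coho{0}{G}{\mathcal{U}\otimes K^\times}\overset{\delta}\longrightarrow \coho{1}{G}{{}_{\varphi}\mathcal{T}(K)}\longrightarrow \coho{1}{G}{\ind{N}{G}{\mathcal{U}\otimes K^\times}}.
\end{align}
By Shapiro's Lemma the right-hand term is $\coho{1}{N}{\mathcal{U}\otimes K^\times}$; since $N\subseteq I$ the group $N$ fixes $\mathcal{U}$, so $\mathcal{U}\otimes K^\times\simeq K^\times$ as a $\operatorname{Gal}(K/K^N)$-module and Hilbert's Theorem 90 makes it vanish. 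Hence $\delta$ is surjective and $\coho{1}{G}{{}_{\varphi}\mathcal{T}(K)}$ is generated by the classes $\delta(a)$, where $a$ ranges over $\coho{0}{G}{\mathcal{U}\otimes K^\times}$; by the action (\ref{1-dim tori}) on $\mathcal{U}\otimes K^\times$, fixedness of $a$ is exactly the condition $g_i(a)=a^{(-1)^{a_i}}$ for all $i$. Finally I would compute $\delta$ explicitly: lifting $a$ along $\pi$ to $f_0\otimes a$, the cocycle $\delta(a)$ sends $g_i$ to $\bigl(g_i\cdot(f_0\otimes a)\bigr)(f_0\otimes a)^{-1}$, and the relation $g_i(a)=a^{(-1)^{a_i}}$ (together with $((-1)^{a_i})^2=1$) cancels the $f_0$-coordinate, leaving the ${}_{\varphi}\mathcal{T}(K)$-component $\bigl(g_i(a)^{(r_i)_1},\dots,g_i(a)^{(r_i)_n}\bigr)=\bigl(a^{b_{1i}(-1)^{a_i}},\dots,a^{b_{ni}(-1)^{a_i}}\bigr)$, with $b_{ji}$ the $(j+1,1)$-entry of $u_i$. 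This is precisely $c(a)$, which finishes part (2).
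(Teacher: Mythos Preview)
Your argument for part (2) is essentially the paper's: set up the short exact sequence coming from the block form of the $u_i$, kill $\coho{1}{G}{\ind{N}{G}{\mathcal{U}\otimes K^\times}}$ via Shapiro and Hilbert~90 (using $N\subseteq I$), conclude that $\delta$ is onto, and compute $\delta$ explicitly by lifting $a$ to the first coordinate. The only cosmetic difference is that the paper names the map $\pi:G\to\operatorname{GL}(n+1,\mathbb{Z})$, $g_i\mapsto u_i$, and writes the middle term as ${}_{\pi}\mathcal{T}(K)$ before identifying it with the induced module; the connecting-map computation and the resulting formula for $c(a)_{g_i}$ are identical.

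For part (1) you are more careful than the paper, and your worry is well placed. The paper does \emph{not} construct a $G$-equivariant section via any Hilbert~90 or averaging argument. It simply observes that the lattice sequence $0\to\ker\pi\to\ind{N}{G}{\mathcal{U}}\to\mathcal{U}\to 0$ splits because $\mathcal{U}\simeq\mathbb{Z}$ is a free $\mathbb{Z}$-module, and then tensors with $K^\times$. As you note, this only yields a splitting of abelian groups, not of $G$-modules, so the paper's own proof of the $G$-module decomposition in (1) is not fully justified by the argument given. You have not missed an idea present in the paper; rather, the obstacle you flag is one the paper glosses over. Fortunately nothing later in the paper uses the $G$-equivariant splitting of (1): all applications go through the exact sequence and part (2), so the weaker (and immediate) statement that ${}_{\varphi}\mathcal{T}(K)$ sits in a short exact sequence with $\ind{N}{G}{\mathcal{U}\otimes K^\times}$ and $\mathcal{U}\otimes K^\times$ is what is actually needed.
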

\begin{proof}
By the similarity condition, there exists a $\mathbb{Z}$-basis $S=\{v_1,\cdots,v_{n+1}\}$ for $\ind{N}{G}{\mathcal{U}}$ such that a matrix representation of $T_{g_i}$ with respect to $S$ is same as $u_i$. \newline

\noindent(1) Define a surjective $\mathbb{Z}$-linear map $\pi: \ind{N}{G}{\mathcal{U}} \rightarrow \mathcal{U}$ by $\pi(v_1)=1$ and $\pi(v_i)=0$ for $2\leq i \leq n+1$. Since $g_i \cdot v_1 = (-1)^{a_i}v_1 + \sum_{j=1}^{n}b_{ji}v_{j+1}$, we have $\pi(g_i \cdot v_1) = (-1)^{a_i} = g_i \cdot \pi(v_1)$ for all $i$. This implies $\pi$ is actually a $G$-module homomorphism. Now, consider a $G$-module $\ker{\pi}\otimes K^\times$ whose $G$-action is given by the simultaneous action of $G$ on $\ker{\pi}$ and its action on $K^\times$ as a Galois group. Since $\{v_2,\cdots,v_{n+1}\}$ is a $\mathbb{Z}$-basis for $\ker{\pi}$, $T_{g_i} |_{\ker{\pi}}$ is same as $g_i$. By Theorem \ref{induced 1}-(1), $\ker{\pi}\otimes K^{\times}$ is isomorphic to ${}_{\varphi}\mathcal{T}(K)$ as a $G$-module. Since $\mathcal{U}$ is a free $\mathbb{Z}$-module, the natural exact sequence $0 \rightarrow \ker{\pi} \rightarrow \ind{N}{G}{\mathcal{U}} \rightarrow \mathcal{U} \rightarrow 0$ splits. By tensoring $K^\times$, we check that $\ker{\pi}\otimes K^\times \simeq {}_{\varphi}\mathcal{T}(K)$ is a direct summand of $\ind{N}{G}{\mathcal{U}}\otimes K^\times$. \newline

\noindent(2) Let $\pi: G \rightarrow \text{GL}(n+1,\mathbb{Z})$ be a group homomorphism that sends $g_i$ to $u_i$. We obtain an exact sequence of $G$-modules with $i(k_1,\cdots,k_n) = (1,k_1,\cdots,k_{n})$ and $p(k_1,\cdots,k_{n+1}) = k_1$.
\begin{align}\label{first seq}
&1 \longrightarrow {}_{\varphi}\mathcal{T}(K) \overset{i}\longrightarrow {}_{\pi}\mathcal{T}(K) \overset{p}\longrightarrow  \mathcal{U} \otimes K^{\times}\simeq K^\times \longrightarrow 1
\end{align}

\noindent Also, (\ref{les module}) gives the following long exact sequence.
\begin{align*}
\coho{0}{G}{{}_{\pi}\mathcal{T}(K)} \overset{p^0}\rightarrow &\coho{0}{G}{\mathcal{U}\otimes K^{\times}} \overset{\delta}\rightarrow \coho{1}{G}{{}_{\varphi}\mathcal{T}(K)}\overset{i^1}\rightarrow \coho{1}{G}{{}_{\pi}\mathcal{T}(K)} \overset{p^1}\rightarrow \coho{1}{G}{\mathcal{U}\otimes K^{\times}} \rightarrow \cdots
\end{align*}

\noindent From this, we extract a short exact sequence as follows.
\begin{align}\label{second seq}
1 \longrightarrow \text{im}\,{\delta} \longrightarrow \coho{1}{G}{{}_{\varphi}\mathcal{T}(K)} \longrightarrow \ker{p^1} \longrightarrow 1
\end{align}

By Theorem \ref{induced 1}-(1), we have $\ind{N}{G}{\mathcal{U}}\otimes K^\times \simeq {}_{\pi}\mathcal{T}(K)$ as $G$-modules. When $N$ is a subgroup of $I$, we get $\coho{1}{G}{{}_{\pi}\mathcal{T}(K)} \simeq \coho{1}{G}{\ind{N}{G}{\mathcal{U}} \otimes K^{\times}} \simeq \coho{1}{N}{\mathcal{U} \otimes K^\times} \simeq 1$ by Hilbert's Theorem 90. Hence, $\ker{\pi^1}$ is clearly trivial and $\coho{1}{G}{{}_{\varphi}\mathcal{T}(K)} \simeq \text{im}\,\delta$. 

Now, we verify $\text{im}\,\delta$ using the definition of $\delta$, see the paragraph after (\ref{les module}). It is easy to see that ${}_{\pi}\mathcal{T}(K)/{}_{\varphi}\mathcal{T}(K) = \{(a,1,\cdots,1) \, | \, a\in K^\times\}$ where $(a,1,\cdots,1)$ denotes a representation of an element of the left coset. Choose any $(a,1,\cdots,1) \in ({}_{\pi}\mathcal{T}(K)/{}_{\varphi}\mathcal{T}(K))^{\,G}$ and observe follows.
\begin{align*}
g_i \cdot (a,1,\cdots,1) & = (g_i(a)^{(-1)^{a_i}},*,\cdots,*) = (g_i(a)^{(-1)^{a_i}},1,\cdots,1)
\end{align*}
From the relation $g_i \cdot (a,1,\cdots,1) = (a,1,\cdots,1)$, we get $g_i(a)^{(-1)^{a_i}} = a$ for all $i$. For such $a \in K^\times$, let $c(a)$ be a 1-cocyle which is a representation of $\delta(a) \in \coho{1}{G}{{}_{\varphi}\mathcal{T}(K)}$. By the definition of $\delta$, we have
\begin{align*}
c(a)_{g_i} &= (a,1,\cdots,1)^{-1} \,\,(g_i\cdot (a,1,\cdots,1)) = (a^{-1},1,\cdots,1)\,\, (g_i(a)^{(-1)^{a_i}}, g_i(a)^{b_{1i}},\cdots,g_i(a)^{b_{ni}}) \\
&= (1,a^{b_{1i}(-1)^{a_i}},\cdots,a^{b_{ni}(-1)^{a_i}})
\end{align*}
Clearly, above elements corresponds to $(a^{b_{1i}(-1)^{a_i}},a^{b_{2i}(-1)^{a_i}}, \cdots, a^{b_{ni}(-1)^{a_i}}) \in {}_{\varphi}\mathcal{T}(K)$.
\end{proof}

\begin{exa}\label{induced 2 exa}
Suppose that $G\simeq \mathbb{Z}_2 \times \mathbb{Z}_2 \leq \operatorname{GL}(3,\mathbb{Z})$ and generated by following matrices.
\[\footnotesize{\left\{r= \begin{pmatrix}
-1 & 0 & 0 \\
0 & 0 & 1 \\
0 & 1 & 0
\end{pmatrix}, s = \begin{pmatrix}
-1 & 0 & 0 \\
1 & 0 & -1 \\
-1 & -1 & 0
\end{pmatrix}
\right\}}
\]
Let $N$ be a trivial subgrup of $G$ and $\mathcal{U}$ be a $G$-integer whose $G$-action is defined by $r \cdot 1 = s \cdot 1 = 1$. The set $\{ e_1 = 1\otimes 1, e_2 = r \otimes 1, e_3 = s\otimes 1, e_4 = rs \otimes 1 \}$ is a $\mathbb{Z}$-basis for $\operatorname{Ind}_N^G(\mathcal{U})$. Then,
\begin{align*}
&T_r(e_1) = e_2,\, T_r(e_2) = e_1,\, T_r(e_3) = e_4,\, T_r(e_4) = e_3,\\
&T_s(e_1) = e_3, \, T_s(e_2) = e_4, \, T_s(e_3) = e_1, \, T_s(e_4) = e_2
\end{align*}
Hence, each matrix representations of $T_r$ and $T_s$ are given as follows.
\[\footnotesize{T_r =\begin{pmatrix}
0 & 1 & 0 & 0 \\
1 & 0 & 0 & 0 \\
0 & 0 & 0 & 1 \\
0 & 0 & 1 & 0
\end{pmatrix}, \,\, T_s = \begin{pmatrix}
0 & 0 & 1 & 0 \\
0 & 0 & 0 & 1 \\
1 & 0 & 0 & 0 \\
0 & 1 & 0 & 0
\end{pmatrix}, \,\, T = \begin{pmatrix}
1 & -1 & 0 & 0 \\
0 & 1 & 1 & 0 \\
0 & -1 & -1 & -1 \\
0 & 1 & 0 & 1
\end{pmatrix}}
\]
By observing $T^{-1}T_rT$ and $T^{-1}T_sT$, we check the similarity condition of Theorem \ref{induced 2} .
\[
\footnotesize{T^{-1}T_rT = \begin{pmatrix}
1 & 0 & 0 & 0 \\
1 & -1 & 0 & 0 \\
0 & 0 & 0 & 1 \\
0 & 0 & 1 & 0
\end{pmatrix}, \,\, T^{-1}T_sT = \begin{pmatrix}
1 & 0 & 0 & 0 \\
1 & -1 & 0 & 0 \\
0 & 1 & 0 & -1 \\
1 & -1 & -1 & 0
\end{pmatrix}}\]
It is clear that $N$ is contained in the canonical subgroup with respect to $\mathcal{U}$. By Theorem \ref{induced 2}-(2), $\coho{1}{G}{{}_{\varphi}\mathcal{T}(K)}$ is generated by $\{c(a) \, | \, a\in k^\times\}$ where $c(a)_r = (a,1,1), \,\, c(a)_s = (a,1,a)$. A calculaton shows that $c(a)$ is cohomologous to $c(a^{\prime})$ iff $a^{\prime} =a \norm{K^{\left<sr\right>}}{k}{(b)}$ for some $b \in (K^{\left<sr\right>})^{\times}$. Hence, $\coho{1}{G}{{}_{\varphi}\mathcal{T}(K)} \simeq \operatorname{Br}(k|K^{\left<sr\right>})$ via the isomorphism $c(a)\mapsto \overline{a}$ where $\overline{\phantom{a}}$ denotes the representation of an element in Brauer group.
\end{exa}

The key idea of Theorem \ref{induced 2} is constructing (\ref{second seq}) and reveals the structure of $\text{im}\,\delta$ when $\coho{1}{G}{{}_{\phi}\mathcal{T}(K)}$ is trivial. Unfortunately, $\coho{1}{G}{{}_{\phi}\mathcal{T}(K)}$ is not trivial when $N$ is not a subgroup of $I$, so Theorem \ref{induced 2} does not work. By adding more conditions of $G$, we can write $\coho{1}{G}{{}_{\varphi}\mathcal{T}(K)}/\text{im}\,\delta$ in terms of Brauer groups.

\begin{cor}\label{cor of induced 2}
We follow the notations in Theorem \ref{induced 2}. Let $\{r,s\}$ be the set of generators of $G$ and $M$ be a subgroup of $\coho{1}{G}{{}_{\varphi}\mathcal{T}(K)}$ generated by 1-cocycles in Theorem \ref{induced 2}-(2). Suppose that $G$ is a dihedral group or an abelian group, and $G/N= \{r^iN \, | \, i=0,\cdots,n \}$. When $G$ is a dihedral group, $r$ denotes a rotation and $s$ denotes a reflection.  Then, $\coho{1}{G}{{}_{\varphi}\mathcal{T}(K)}/M \simeq \ker{\eta}$ where $\eta$ is a group homomorphism $\eta: \operatorname{Br}(K^N |K^{I \cap N}) \rightarrow \operatorname{Br}(k|K^I)$ induced by a norm map $\norm{K^N}{k}:K^N\rightarrow k$.
\end{cor}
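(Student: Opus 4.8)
The plan is to build on the short exact sequence (\ref{second seq}) already produced in the proof of Theorem \ref{induced 2}, namely
\[
1 \longrightarrow \operatorname{im}\delta \longrightarrow \coho{1}{G}{{}_{\varphi}\mathcal{T}(K)} \longrightarrow \ker p^1 \longrightarrow 1,
\]
together with the observation that the subgroup $M$ of Theorem \ref{induced 2}-(2) is precisely $\operatorname{im}\delta$. Indeed, the $1$-cocycles $c(a)$ constructed there are by definition representatives of $\delta(a)$ for $a$ ranging over $(\,{}_{\pi}\mathcal{T}(K)/{}_{\varphi}\mathcal{T}(K)\,)^{G}$, so $M=\operatorname{im}\delta$ and hence $\coho{1}{G}{{}_{\varphi}\mathcal{T}(K)}/M \simeq \ker p^1$. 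Thus the whole problem reduces to identifying $\ker p^1$, where $p^1:\coho{1}{G}{{}_{\pi}\mathcal{T}(K)}\to\coho{1}{G}{\mathcal{U}\otimes K^\times}$ is the map induced by $p$ in (\ref{first seq}).

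First I would compute the two cohomology groups flanking $p^1$. Since $\operatorname{Ind}_{N}^{G}(\mathcal{U})\otimes K^\times \simeq {}_{\pi}\mathcal{T}(K)$ by Theorem \ref{induced 1}-(1), Shapiro's Lemma gives
\[
\coho{1}{G}{{}_{\pi}\mathcal{T}(K)}\simeq \coho{1}{N}{\mathcal{U}\otimes K^\times}\simeq \operatorname{Br}(K^N|K^{I\cap N})
\]
by Theorem \ref{1-dim} applied over the base field $K^N$ with canonical subgroup $I\cap N$ of $N=\operatorname{Gal}(K/K^N)$. On the target side, $\coho{1}{G}{\mathcal{U}\otimes K^\times}\simeq \operatorname{Br}(k|K^I)$ by Theorem \ref{1-dim} directly. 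So $p^1$ becomes a homomorphism $\operatorname{Br}(K^N|K^{I\cap N})\to\operatorname{Br}(k|K^I)$, and the claim is that under these identifications $p^1$ coincides with the norm-induced map $\eta$, whence $\ker p^1=\ker\eta$ and the corollary follows.

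The main step, therefore, is to verify that $p^1$ is exactly $\eta$, i.e. the map induced by $\norm{K^N}{k}$. To do this I would trace an explicit class through both identifications. Take $\overline{a}\in\operatorname{Br}(K^N|K^{I\cap N})$ with $a\in K^N$; under the Shapiro isomorphism it corresponds to an explicit $1$-cocycle on $G$ valued in ${}_{\pi}\mathcal{T}(K)$, computed exactly as in Corollary \ref{generator form} via the coset representatives $t_i=r^i$ and the integers $b_i(g)$. Applying $p$ (projection onto the first coordinate) and reading the resulting $G$-cocycle back through the isomorphism $\coho{1}{G}{\mathcal{U}\otimes K^\times}\simeq\operatorname{Br}(k|K^I)$ of Theorem \ref{1-dim}, the product over the cosets $\{r^iN\}$ of the Galois translates $r^i(a)$ assembles into the norm $\norm{K^N}{k}(a)$; this is where the hypothesis that $G$ is dihedral or abelian with $G/N=\{r^iN\}$ enters, since it fixes the coset structure and guarantees that the first coordinate of $p$ of the cocycle, evaluated on the relevant generators, multiplies out to the full norm rather than a partial product. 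The dihedral versus abelian case split governs how the reflection $s$ (resp. the second abelian generator) permutes the coordinates, and I expect the abelian case to be immediate while the dihedral case requires checking that the reflection contributes trivially to the first coordinate.

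The hard part will be the bookkeeping in this last step: keeping the three distinct field layers ($k\subset K^{I}$, $k\subset K^{N}$, $K^{I\cap N}\subset K^N$) consistent while transporting an explicit cocycle through two applications of Shapiro's Lemma and the alternating-module shift (\ref{alternating}) baked into Theorem \ref{1-dim}. Rather than recompute the Shapiro isomorphism from scratch, I would reuse the explicit cochain-level description already established in the proof of Corollary \ref{generator form} (the maps $i_0,i_1$ and the resulting formula $G(1\otimes g)=\sum_i t_i\otimes t_i(a)^{b_i(g)}$), apply $p$ to it, and match coordinates; the identification of the resulting first-coordinate expression with $\norm{K^N}{k}(a)$ is then the decisive computation that pins down $\eta$ and completes the proof.
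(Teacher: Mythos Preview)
Your proposal is correct and follows the same strategy as the paper: reduce to $\ker p^1$ via the sequence (\ref{second seq}), identify the source and target of $p^1$ as the Brauer groups via Shapiro and Theorem \ref{1-dim}, and then verify $p^1=\eta$ by chasing the explicit cocycles of Corollary \ref{generator form} with coset representatives $t_i=r^{i-1}$, splitting into the dihedral and abelian cases (and further into subcases according to which of $r,s$ lie in $I$). One refinement you should anticipate: in the dihedral subcases the first coordinate of the projected cocycle is not literally $\norm{K^N}{k}(a)$ but only cohomologous to it, and the paper produces an explicit coboundary element $t$ built from partial products of the $r^i(a)$ to finish the identification, so your expected ``product over cosets'' appears only after this extra step.
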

\begin{proof}
We follow the notations in the proof of Theorem \ref{induced 2}. From (\ref{second seq}), we have $\coho{1}{G}{{}_{\varphi}\mathcal{T}(K)} / M$ $\simeq \ker{p^1}$. Note that the domain of $p^1$ is $\coho{1}{G}{{}_{\pi}\mathcal{T}(K)} \simeq \coho{1}{G}{\ind{N}{G}{\mathcal{U}}\otimes K^\times} \simeq \brau{K^N}{K^{I\cap N}}$ and codomain of $p^1$ is $\coho{1}{G}{\mathcal{U}\otimes K^\times}\simeq \brau{k}{K^I}$. With this identification, we should verify $p^1$ is same as $\eta$. By the condition of $G/N$, the set of generators of $N$ can be written as $\{r^{n+1},sr^i\}$ for some $i\in \mathbb{Z}$. Without loss of generality, we may assume $i=0$. In this proof, $\overline{\phantom{a}}$ denotes the representative of an element of Brauer group.

To find generators of $\coho{1}{G}{{}_{\pi}\mathcal{T}(K)}$, we should apply Corollary \ref{generator form} in the setting of $a_i=r^{i-1}$. Let $c(a)$ be a 1-cocycle defined in Corollray \ref{generator form} for each $a \in (K^N)^\times$. By Corollary \ref{generator form}, there exist some $T \in \text{GL}(n,\mathbb{Z})$ such that $T *c(a)$ is the generator of $\coho{1}{G}{{}_{\pi}\mathcal{T}(K)}$. If $c(a)_g=(a_1,\cdots,a_{n+1})$ for $g \in G$, then $(p^1(T*c))_g = \prod_{1\leq i \leq n+1} r^{i-1}(a_i)$. 

Now, we handle when $G$ is a dihedral group. Suppose that $r$ and $s$ are not in $I$. Let $b_i(r)$ and $b_i(s)$ be integers defined as (\ref{integer}). First, consider when $n$ is even. From the relation $1^{-1}s1 \notin I$, $r^{-1}sr^{n} \in I, \cdots, r^{-n}sr\in I$, we get $b_1(s)=1$ and $b_i(s)=0$ for all $2\leq i \leq n+1$. Similarly, we get $b_1(r)=1$ and $b_2(i)=0$ for all $2\leq i \leq n+1$. By Corollary \ref{generator form}, we get a 1-cocycle $c(a):G\rightarrow K^{\times n+1}$, where $c(a)_r=(a,1,\cdots,1)$ and $c(a)_s=(a,1,\cdots,1)$ for $a \in (K^N)^\times$. For some $T \in \text{GL}(n,\mathbb{Z})$, $T *c(a)$ is one of the generators of $\coho{1}{G}{{}_{\pi}\mathcal{T}(K)}$ and we denote this cocycle as $d$. We have $(p^1 d)_r =(p^1 d)_s = a$ and $t^{-1}(p^1 d)_r (r\cdot t) = t^{-1}(p^1 d)_s (s\cdot t) = \norm{K^N}{k}{(a)}$ for $t=(r(a)r^3(a)\cdots r^{n-1}(a))^{-1}$. Clearly, this 1-cocycle corresponds to $\overline{\norm{K^N}{k}{(a)}} \in \brau{k}{K^I}$. Hence, we checked $p^1$ is same as $\eta$ in this case. We can apply same analysis for the remaining cases : when $r,s \notin I$ and $n$ is odd, when $r,s\in I$, when $r \notin I$, $s\in I$ and when $r,s\notin I$.

Next, we handle when $G$ is an abelian group. Let me show when $r \in I$ and $s \notin I$. Then, $\overline{a} \in \brau{K^N}{K^I}$ corresponds to a 1-cocycle $c(a)$, where $c(a)_r=(1,\cdots,1)$ and $c(a)_s = (a,r(a),\cdots$ \\ $,r^n(a))$. For some $T \in \text{GL}(n,\mathbb{Z})$, $T *c(a)$ is one of the generators of $\coho{1}{G}{{}_{\pi}\mathcal{T}(K)}$ and we denote this cocycle as $d$. We have $(p^1 d)_r=1$, $(p^1 d)_s = ar(a)\cdots r^n(a) = \norm{K^N}{k}{(a)}$ and this cocycle corresponds to $\overline{\norm{K^N}{k}{(a)}}\in \brau{k}{K^I}$. Same analysis holds on other cases.
\end{proof} 

\begin{exa}\label{induced 2 exa 2}
Suppose that $G \simeq D_6 \leq \operatorname{GL}(2,\mathbb{Z})$ and generated by following matrices.
\[\footnotesize{ \left\{r= \begin{pmatrix}
0&-1 \\
1&-1 
\end{pmatrix},\, s= \begin{pmatrix} 0&1 \\ 1&0 \end{pmatrix}
\right\}}
\]
Note that $r$ is a rotation and $s$ is a reflection. Let $N = \{1,s\}$ and $\mathcal{U}$ be a $G$-integer defined by $r\cdot 1 = 1$, $s \cdot 1 = -1$. Since $G/N = \{N,rN,r^2N\}$, the set $\{e_i =r^{i-1}\otimes 1 \, | \, i=1,2,3\}$ is a $\mathbb{Z}$-basis for $\operatorname{Ind}_{N}^G(\mathcal{U})$. Matrix representations of $T_r$ and $T_s$ with respect to $\{e_1,e_2,e_3\}$ are given as follows.
\begin{align*}
\footnotesize{
T_r = \begin{pmatrix} 0&0&1 \\ 1&0&0 \\ 0&1&0 \end{pmatrix}, \,\, T_s = \begin{pmatrix} -1&0&0 \\ 0&0&-1 \\ 0&-1&0 \end{pmatrix}, \,\, T = \begin{pmatrix} 1&-1&1 \\ 0&0&-1 \\ 0&1&0 \end{pmatrix}}
\end{align*}
By observing $T^{-1}T_rT$ and $T^{-1}T_sT$, we can check the condition of Theorem \ref{induced 2}.
\[\footnotesize{T^{-1}T_rT = \begin{pmatrix} 1&0&0 \\ 0&0&-1 \\ -1&1&-1 \end{pmatrix},\,\, T^{-1}T_sT = \begin{pmatrix} -1&0&0 \\ 0&0&1 \\ 0&1&0 \end{pmatrix}}
\]

\noindent Let $\eta: \operatorname{Br}(K^{\left<s\right>}|K) \rightarrow \operatorname{Br}(k|K^{\left<r\right>})$  be a group homomorphism induced by $\norm{K^{\left<s\right>}}{k}: K^{\left<s\right>} \rightarrow k$. Denote by $\overline{\phantom{a}}$ the representative of an element of Brauer group. Suppose that $\eta(\overline{a}) = \overline{1}$ for some $a \in K^{\left<s\right>}$, equivalently, there exist some $b \in (K^{\left<r\right>})^\times$ such that $ar(a)r^2(a)=bs(b)$. Then, $a = \frac{b}{r(a)}s(\frac{b}{r(a)}) \in \text{Im}\,\norm{K}{K^{\left<s\right>}}$ and so $\ker{\eta}$ is trivial. By Corollary \ref{cor of induced 2}, $\coho{1}{G}{{}_{\varphi}\mathcal{T}(K)} \simeq M$ and $M$ is generated by $\{c(a)\,|\,a\in K^\times \,\text{with}\,\,r(a)=a,\,s(a)=a^{-1}\}$, where  $c(a)_r = (1,a^{-1}),\,c(a)_s=(1,1)$. A calculation shows that $c(a)$ is cohomologous to $c(a^{\prime})$ iff $a^{\prime} = a\norm{K}{K^{\left<r\right>}}{(x)}$ for some $x\in K^\times$ satisfying $\norm{K}{K^{\left<sr\right>}}{(x)}=1$. Hence, the isomorphism that sends $c(a)$ to $\overline{a}$ gives the follows.
\begin{align}\label{M}
\coho{1}{G}{{}_{\varphi}\mathcal{T}(K)} \simeq M \simeq \frac{\{a\in K^\times \, | \, r(a)=a,s(a)=a^{-1}\}}{\{\norm{K}{K^{\left<r\right>}}{(b)}\,|\,b \in K^\times \,\, \text{with} \,\, \norm{K}{K^{\left<sr\right>}}{(b)}=1\}}
\end{align}
\end{exa}

See Table 5,6 of Appendix A. When $n=2$ or $n=3$, we summarized a conjugacy classes of $G$ in $\text{GL}(n,\mathbb{Z})$, a subgroup $N$ of $G$ and a $G$-integer $\mathcal{U}$ that satisfy the similarity condition of Theorem \ref{induced 2}. For these cases, we can compute $\coho{1}{G}{{}_{\varphi}\mathcal{T}(K)}$ as Example \ref{induced 2 exa} and \ref{induced 2 exa 2}.

\subsection{Dimension reduction} 
Suppose that $\{g_i\,|\,i=1,\cdots,m\}$ is simultaneously similar to follows.
\begin{align}\label{block similarity}
\left\{
r_i = \begin{pmatrix}
A_i & O \\ 
C_i & B_i
\end{pmatrix} \, | \, 
\begin{tabular}{c}
$1\leq i \leq m,\, A_i\in \text{GL}(t,\mathbb{Z}),\, B_i \in \text{GL}(s,\mathbb{Z})$ \\
$C_i$ is an $s \times t$ matrix, $O$ is an zero matrix
\end{tabular}
\right\}
\end{align}
There is no condition of natural number $t$ and $s$ except $t+s=n$. This subsection is devoted to solve the problem (\ref{problem}) with this similarity condition. Let $\pi:G \rightarrow \text{GL}(t,\mathbb{Z})$ be a group homomorphism that sends $g_i$ to $A_i$ and $\psi:G\rightarrow \text{GL}(s,\mathbb{Z})$ that sends $g_i$ to $B_i$. It is natural to ask whether $\coho{1}{G}{{}_{\varphi}\mathcal{T}(K)}$ can be written as a direct sum of $\coho{1}{G}{{}_{\pi}\mathcal{T}(K)}$ and $\coho{1}{G}{{}_{\psi}\mathcal{T}(K)}$. This work transforms the problem (\ref{problem}) for an $n$-dimensional case into the problem (\ref{problem}) for a lower dimensional case.

\begin{thm}\label{main thm of 3.3}
Suppose the similarity condition (\ref{block similarity}) holds. Then,

\begin{enumerate}
\item If $C_i = 0$ for all $i$, then $\coho{1}{G}{{}_{\varphi}\mathcal{T}(K)} \simeq \coho{1}{G}{{}_{\pi}\mathcal{T}(K)}\oplus  \coho{1}{G}{{}_{\psi}\mathcal{T}(K)}$.

\item Suppose $A_i$'s are lower triangluar matrices with only 1 lies on the diagonal or  $A_i$'s are upper triangular matrices with only 1 lies on the diagonal. Then, $\coho{1}{G}{{}_{\varphi}\mathcal{T}(K)} \simeq \coho{1}{G}{{}_{\psi}\mathcal{T}(K)}$.

\item Suppose that every $r_i$ are diagonal matrices, denoted by $r_i = \text{diag}(\,(-1)^{a_{1i}}, (-1)^{a_{2i}},\cdots,$ \\ $(-1)^{a_{ni}} )$ for some $a_{ji}\in \{0,1\}$. Let $\mathcal{U}_i$ be a $G$-integer defined by $g_j \cdot 1 = (-1)^{a_{ij}}$. Then, $\coho{1}{G}{{}_{\varphi}\mathcal{T}(K)} \simeq \bigoplus_{i=1}^{m} \operatorname{Br}(k| K^{I_i})$ where $I_i$ is a canonical subgroup with respect to $\mathcal{U}_i$.
\end{enumerate}
\end{thm}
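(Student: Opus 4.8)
The plan is to exploit the block structure directly on the cocharacter lattice and then pass to cohomology, using one common mechanism for all three parts. After replacing $\{g_i\}$ by the similar set $\{r_i\}$, I may assume $\varphi(g_i)=r_i$, so that the rank-$n$ lattice $N$ underlying ${}_{\varphi}\mathcal{T}(K)\simeq N\otimes_{\mathbb{Z}}K^{\times}$ carries a $G$-stable sublattice, namely the last $s$ coordinates on which $G$ acts through the $B_i$, with quotient lattice acted on through the $A_i$. Since these lattices are free over $\mathbb{Z}$, the sequence $0\to\mathbb{Z}^{s}\to N\to\mathbb{Z}^{t}\to0$ splits as $\mathbb{Z}$-modules and stays exact after $-\otimes_{\mathbb{Z}}K^{\times}$, producing a short exact sequence of $G$-modules
\[1\longrightarrow {}_{\psi}\mathcal{T}(K)\longrightarrow {}_{\varphi}\mathcal{T}(K)\longrightarrow {}_{\pi}\mathcal{T}(K)\longrightarrow 1.\]
Feeding this into the long exact cohomology sequence (\ref{les module}) is the engine I would use throughout.

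For (1), when every $C_i=0$ the lattice sequence splits $G$-equivariantly, so $N\simeq\mathbb{Z}^{t}\oplus\mathbb{Z}^{s}$ and hence ${}_{\varphi}\mathcal{T}(K)\simeq {}_{\pi}\mathcal{T}(K)\oplus {}_{\psi}\mathcal{T}(K)$ as $G$-modules; as these are abelian, $\coho{1}{G}{-}$ is additive on finite direct sums and the decomposition follows at once, the only thing to check being that the inclusions and projections are $G$-equivariant. Part (3) I would then obtain by iterating (1): a simultaneous diagonalization into $\pm1$ entries splits $N$ as a direct sum of $n$ rank-one $G$-integers $\mathcal{U}_1,\dots,\mathcal{U}_n$, with $g_j$ acting on $\mathcal{U}_i$ by $(-1)^{a_{ij}}$. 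Additivity of $\coho{1}{G}{-}$ then reduces everything to the one-dimensional situation, and Theorem \ref{1-dim} identifies each summand $\coho{1}{G}{\mathcal{U}_i\otimes K^{\times}}$ with $\brau{k}{K^{I_i}}$ for the canonical subgroup $I_i$ of $\mathcal{U}_i$. This is the most mechanical part; the only care needed is that the direct sum ranges over the $n$ diagonal coordinates.

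Part (2) is where the real work lies. The hypothesis that each $A_i$ is triangular with $1$'s on the diagonal should be used to make the quotient representation $\pi$ cohomologically transparent: a finite-order integral matrix whose eigenvalues are all $1$ is the identity, so ${}_{\pi}\mathcal{T}(K)$ is a product of split factors $K^{\times}$ and $\coho{1}{G}{{}_{\pi}\mathcal{T}(K)}=1$ by Hilbert's Theorem 90. Substituting this into the long exact sequence collapses it to
\[\coho{0}{G}{{}_{\pi}\mathcal{T}(K)}\overset{\delta}{\longrightarrow}\coho{1}{G}{{}_{\psi}\mathcal{T}(K)}\longrightarrow\coho{1}{G}{{}_{\varphi}\mathcal{T}(K)}\longrightarrow 1,\]
so the natural map $\coho{1}{G}{{}_{\psi}\mathcal{T}(K)}\to\coho{1}{G}{{}_{\varphi}\mathcal{T}(K)}$ is automatically surjective and the whole statement reduces to proving that this map is injective, equivalently that the connecting homomorphism $\delta$ is trivial.

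The hard part will be exactly this vanishing of $\delta$, i.e.\ showing that every $G$-invariant point of the split quotient ${}_{\pi}\mathcal{T}(K)$ lifts to a $G$-invariant point of ${}_{\varphi}\mathcal{T}(K)$. I would attack it by taking an invariant $(a_1,\dots,a_t)\in\coho{0}{G}{{}_{\pi}\mathcal{T}(K)}=(k^{\times})^{t}$, choosing the evident lift with the remaining $s$ coordinates equal to $1$, and measuring the defect of $G$-invariance as a cocycle valued in ${}_{\psi}\mathcal{T}(K)$ determined by the glue matrices $C_i$; the triangular shape of $A_i$ should then let me solve the resulting equations one coordinate at a time, absorbing each obstruction by Hilbert 90 in the split quotient. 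I expect this to be the genuinely delicate step, since it is the precise interplay between the entries of $C_i$ and the induced coboundaries in ${}_{\psi}\mathcal{T}(K)$—and not merely the triviality of $\pi$—that governs whether $\delta$ vanishes, and it is here that any additional structural hypothesis on the reduction would have to be invoked.
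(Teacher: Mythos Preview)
Your treatment of parts (1) and (3) is exactly what the paper does: in (1) the block-diagonal hypothesis makes the identity map a $G$-isomorphism ${}_{\varphi}\mathcal{T}(K)\simeq{}_{\pi}\mathcal{T}(K)\oplus{}_{\psi}\mathcal{T}(K)$, and (3) follows by iterating (1) together with Theorem~\ref{1-dim}. Your observation for (2) that a unipotent integral matrix of finite order is the identity, hence each $A_i=I$ and $\coho{1}{G}{{}_{\pi}\mathcal{T}(K)}=1$ by Hilbert~90, is cleaner than the paper's inductive argument for the same vanishing.

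Where you and the paper diverge is precisely at the step you flag as delicate. The paper does \emph{not} analyse $\delta$; it simply asserts that for any $(k_1,\dots,k_t)\in{}_{\pi}\mathcal{T}(K)^G$ the element $(k_1,\dots,k_t,1,\dots,1)$ is $G$-invariant in ${}_{\varphi}\mathcal{T}(K)$, so that $p^0$ is surjective and $\delta=0$. That assertion is wrong whenever some $C_i\neq0$: the last $s$ coordinates of $g_i\cdot(k_1,\dots,k_t,1,\dots,1)$ pick up the monomials $\prod_j k_j^{(C_i)_{\ell j}}$, which need not equal $1$. Your instinct that ``it is the precise interplay between the entries of $C_i$ and the induced coboundaries'' that matters is exactly right, and in fact the conclusion of (2) fails in general. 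Take $G=\mathbb{Z}/2$, $n=2$, $t=s=1$, and
\[
\varphi(g)=\begin{pmatrix}1&0\\ 1&-1\end{pmatrix},\qquad A_1=(1),\ B_1=(-1),\ C_1=(1).
\]
Then $A_1$ is trivially unipotent, $\psi(g)=-1$ gives $\coho{1}{G}{{}_{\psi}\mathcal{T}(K)}\simeq\operatorname{Br}(k|K)$, while $\varphi(g)$ is $\mathrm{GL}(2,\mathbb{Z})$-conjugate to $J=\left(\begin{smallmatrix}0&1\\1&0\end{smallmatrix}\right)$ (a permutation module), so $\coho{1}{G}{{}_{\varphi}\mathcal{T}(K)}=1$. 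Here $\delta$ is the natural surjection $k^{\times}\to k^{\times}/\mathrm{N}_{K/k}(K^{\times})$ and is nonzero. So the paper's proof of (2) has a genuine gap, your caution is warranted, and no argument along the lines you sketch can close it without an extra hypothesis (for instance $C_i=0$, which collapses (2) into (1)).
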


\begin{proof}
$\phantom{a}$ \newline
\noindent (1) It is easy to check that the identity map from ${}_{\varphi}\mathcal{T}(K)$ to ${}_{\pi}\mathcal{T}(K) \oplus {}_{\psi}\mathcal{T}(K)$ preseves the $G$-action. Therefore, ${}_{\varphi}\mathcal{T}(K) \simeq {}_{\pi}\mathcal{T}(K) \oplus {}_{\psi}\mathcal{T}(K)$ as a $G$-moudle and we have $\coho{1}{G}{{}_{\varphi}\mathcal{T}(K)} \simeq \coho{1}{G}{{}_{\pi}\mathcal{T}(K)} \oplus \coho{1}{G}{{}_{\psi}\mathcal{T}(K)}$. \newline

\noindent(2) Define a projection map $p : {}_{\varphi}\mathcal{T}(K) \rightarrow {}_{\pi}\mathcal{T}(K)$ that sends $(k_1,\cdots,k_n)$ to $(k_1,\cdots,k_t)$. Clearly, $p$ is a surjective $G$-module homomorphism with $\ker{\pi} \simeq {}_{\psi}\mathcal{T}(K)$. So, we have a natual exact sequence of $G$-modules as follows 
\[1 \longrightarrow {}_{\psi}\mathcal{T}(K) \overset{i}\longrightarrow {}_{\varphi}\mathcal{T}(K) \overset{p}\longrightarrow {}_{\pi}\mathcal{T}(K) \longrightarrow 1\]
From (\ref{les module}), this induces a long exact sequence.
\begin{align*}
\coho{0}{G}{{}_{\varphi}\mathcal{T}(K)} & \overset{p^0}{\rightarrow} \coho{0}{G}{{}_{\pi}\mathcal{T}(K)} \rightarrow \coho{1}{G}{{}_{\psi}\mathcal{T}(K)} \overset{i^1}{\rightarrow} \coho{1}{G}{{}_{\varphi}\mathcal{T}(K)} \overset{p^1}{\rightarrow} \coho{1}{G}{{}_{\pi}\mathcal{T}(K)}\rightarrow \cdots
\end{align*}
Take any $(k_1,\cdots,k_t) \in {}_{\pi}\mathcal{T}(K)^G$. Note that $(k_1,\cdots,k_t,1,\cdots,1)$ is a $G$-invariant element of ${}_{\varphi}\mathcal{T}(K)$ that goes to $(k_1,\cdots,k_t) $ via the map $p^0$. So, $p^0$ is surjective and our long exact sequence can be reduced to
\begin{align}\label{third seq}
1 \longrightarrow \coho{1}{G}{{}_{\psi}\mathcal{T}(K)} \longrightarrow \coho{1}{G}{{}_{\varphi}\mathcal{T}(K)} \longrightarrow \coho{1}{G}{{}_{\pi}\mathcal{T}(K)}
\end{align}

Without loss of generality, we may assume that $A_i$'s are all lower triangular matrices. Now, we prove that $\coho{1}{G}{{}_{\pi}\mathcal{T}(K)}$ is trivial by using induction on $t$. When $t=1$, $\coho{1}{G}{K^{\times}}$ is clearly trivial by Hilbert's Theorem 90. For a inductive step, suppose that $t=d$ and $A_i$'s can be partitioned into as follows. \vspace{0.7em}
\[\footnotesize{A_i = \begin{pmatrix}
1 & 0 & \cdots & 0 \\
* & \\
\vdots & & \Huge{D_i} \\
* & 
\end{pmatrix}}
\]

Let $\eta:G \rightarrow \text{GL}(d-1,\mathbb{Z})$ be a group homomorphism that sends $g_i$ to $D_i$. By the same analysis before (\ref{third seq}), we have an exact sequence as $1 \rightarrow \coho{1}{G}{{}_{\eta}\mathcal{T}(K)} \rightarrow \coho{1}{G}{{}_{\pi}\mathcal{T}(K)} \rightarrow \coho{1}{G}{K^{\times}}$.
Since $\coho{1}{G}{K^\times}$ is trivial, we have $\coho{1}{G}{{}_{\pi}\mathcal{T}(K)} \simeq \coho{1}{G}{{}_{\eta}\mathcal{T}(K)}$. By induction hyphothesis, $\coho{1}{G}{{}_{\eta}\mathcal{T}(K)}$ is also trivial and so induction has been finished. From an exact sequence (\ref{third  seq}), we have $\coho{1}{G}{{}_{\varphi}\mathcal{T}(K)} \simeq \coho{1}{G}{{}_{\psi}\mathcal{T}(K)}$.
\newline

\noindent (3) For an fixed integer $i$, consider a group homomorphism $f: G \rightarrow \text{Aut}(\mathbb{Z})\simeq \mathbb{Z}_2=\{1,-1\}$ that sends $g \in G$ to $(i,i)$-th entry of $\varphi(g)$. We can define a $G$-integer $\mathcal{U}_i$ by $g \cdot 1 =f(g)$ for all $g\in G$. So, we checked the well-definedness of each $G$-integer $\mathcal{U}_i$. By applying Theorem \ref{main thm of 3.3}-(1) and Theorem \ref{1-dim}, we have $\coho{1}{G}{{}_{\varphi}\mathcal{T}(K)} \simeq \bigoplus_{i=1}^m \coho{1}{G}{\mathcal{U}_i \otimes K^\times} \simeq \bigoplus_{i=1}^m\brau{k}{K^{I_i}}$.
\end{proof}

\begin{remark}\label{rmk}
Here are concrete descriptions of isomorphisms in Theorem \ref{main thm of 3.3}. 
\begin{itemize}
\item[(1)] $(a,b) \in \coho{1}{G}{{}_{\pi}\mathcal{T}(K)}\oplus \coho{1}{G}{{}_{\psi}\mathcal{T}(K)}$ corresponds to $c \in \coho{1}{G}{{}_{\varphi}\mathcal{T}(K)}$ such that $c_g = (a_g,b_g)$ for all $g \in G$.
\item[(2)] $b \in \coho{1}{G}{{}_{\psi}\mathcal{T}(K)}$ corresponds to $c\in \coho{1}{G}{{}_{\varphi}\mathcal{T}(K)}$ such that $c_g = (1,\cdots,1,b_g)$ for all $g\in G$.
\item[(3)] Denote by $\overline{\phantom{a}}$ the representation of an element of Brauer group. Then, $(\overline{a_1},\cdots,\overline{a_n}) \in  \bigoplus_{i=1}^m \operatorname{Br}(k|K^{I_i})$ corresponds to $c \in \coho{1}{G}{{}_{\varphi}\mathcal{T}(K)}$ such that $c_g = (a_1^{b_1},a_2^{b_2},\cdots, a_n^{b_n})$ where $b_i=0$ when $g \in I_i$ and $b_i=1$ when $g \notin I_i$.
\end{itemize}
\end{remark}

\begin{exa} \label{exa1 of 3.3}
Suppose that $G\simeq \mathbb{Z}_2 \times \mathbb{Z}_2 \leq \operatorname{GL}(4,\mathbb{Z})$ and generated by following matrices.
\[\footnotesize{\left\{ r=\begin{pmatrix} 1&0&0&0 \\ 0&1&0&0 \\ 1&1&-1&0 \\ 0&0&0&1 \end{pmatrix},\,s= \begin{pmatrix} 1&0&0&0 \\ 0&1&0&0 \\ 0&0&1&0 \\ 0&1&0&-1 \end{pmatrix} \right\}}
\]
Let $A_1$, $A_2$, $B_1$, $B_2$ be submatrices as follows.
\[\footnotesize{A_1 = A_2= \begin{pmatrix} 1&0 \\ 0&1\end{pmatrix}, \,\,B_1 = \begin{pmatrix} -1 & 0 \\ 0 & 1 \end{pmatrix}, \,\, B_2 = \begin{pmatrix} 1&0 \\ 0&-1 \end{pmatrix}}
\]
Let $\psi:G \rightarrow \operatorname{GL}(2,\mathbb{Z})$ be a group homomorphism that sends $r$ to $B_1$ and $s$ to $B_2$. Since $A_1$ and $A_2$ are identity, we have $\coho{1}{G}{{}_{\varphi}\mathcal{T}(K)}\simeq \coho{1}{G}{{}_{\psi}\mathcal{T}(K)}$ by Theorem \ref{main thm of 3.3}-(2). We use Theorem \ref{main thm of 3.3}-(3) to compute $\coho{1}{G}{{}_{\psi}\mathcal{T}(K)}$. Let $\mathcal{U}_1$ be a $G$-integer whose $G$-action defined by $r\cdot 1 = -1$, $s \cdot 1 =1$. The canonical subgroup with respect to $\mathcal{U}_1$ is $\left<s\right>$. Let $\mathcal{U}_2$ be a $G$-integer whose $G$-action defined by $r \cdot 1=1$, $s\cdot 1 =-1$. The canonical subgroup with respect to $\mathcal{U}_2$ is $\left<r\right>$. By Theorem \ref{main thm of 3.3}-(3), we have $\coho{1}{G}{{}_{\psi}\mathcal{T}(K)}\simeq \operatorname{Br}(k|K^{\left<s\right>})\oplus \operatorname{Br}(k|K^{\left<r\right>})$.
\end{exa}

\begin{exa} \label{exa2 of 3.3}
Suppose that $G \simeq D_6 \leq \operatorname{GL}(3,\mathbb{Z})$ and generated by following matrices.
\[\footnotesize{\left\{
r= \begin{pmatrix} 1& 0 & 0 \\ 0 & 0 & -1 \\ 0 & 1 & -1 \end{pmatrix}, s = \begin{pmatrix}
-1 & 0 & 0 \\ 0 & 0 & -1 \\ 0 & -1 & 0
\end{pmatrix}  \right\}}
\]
Let $A_1$, $A_2$, $B_1$, $B_2$ be submatrices as follows.
\[
\footnotesize{A_1=1,\,\, A_2=-1, \,\,B_1 = \begin{pmatrix} 0 & -1 \\ 1 & -1 \end{pmatrix}, \,\, B_2 = \begin{pmatrix} 0&-1 \\ -1&0 \end{pmatrix}}
\]
Let $\pi:G \rightarrow \operatorname{Aut}(\mathbb{Z})=\{1,-1\}$ be a group homomorphism that sends $r$ to $A_1$ and $s$ to $A_2$. Let $\psi: G \rightarrow \operatorname{GL}(2,\mathbb{Z})$ be a group homomorphism that sends $r$ to $B_1$ and $s$ to $B_2$.
By Theorem \ref{main thm of 3.3}-(1), $\coho{1}{G}{{}_{\varphi}\mathcal{T}(K)} \simeq \coho{1}{G}{{}_{\pi}\mathcal{T}(K)} \, \oplus \, \coho{1}{G}{{}_{\psi}\mathcal{T}(K)}$. Since $A_i$'s are diagonal matrices, $\coho{1}{G}{{}_{\pi}\mathcal{T}(K)} \simeq \operatorname{Br}(k|K^{\left<r\right>})$ by Theorem \ref{main thm of 3.3}-(3). By Proposition \ref{computation reduce}, we have $\coho{1}{G}{{}_{\psi}\mathcal{T}(K)}\simeq \coho{1}{N}{{}_{\psi}\mathcal{T}(K)}$ where $N$ is a subgroup of $\operatorname{GL}(2,\mathbb{Z})$ generated by $\{B_1,B_2\}$. In Example \ref{induced 2 exa 2}, the last term is computed as $M$ defined in (\ref{M}). Finally, we have $\coho{1}{G}{{}_{\varphi}\mathcal{T}(K)}\simeq \operatorname{Br}(k|K^{\left<r\right>}) \oplus M$.

Now, we find generators of $\coho{1}{G}{{}_{\varphi}\mathcal{T}(K)}$. By Remark \ref{rmk}-(3), $\coho{1}{G}{{}_{\pi}\mathcal{T}(K)}$ is generated by $\{c(a) \, | \, a \in k^\times \}$ where $c(a)_r = 1$ and $c(a)_s = a$. In Example \ref{induced 2 exa 2}, we showed that $\coho{1}{G}{{}_{\psi}\mathcal{T}(K)}$ is generated by $\{c^{\prime}(b)\, | \,b\in K^\times \, \text{with}\,\,r(b)=b,\,s(b)=b^{-1} \}$ where $c^{\prime}(b)_{r} = (1,b^{-1})$ and $c^{\prime}(b)_{s} = (1,1)$. By Remark \ref{rmk}-(1), $\coho{1}{G}{{}_{\varphi}\mathcal{T}(K)}$ is generated by the set of 1-cocycles $\{c(a,b)\, | \, a\in k^{\times},\, b\in K^\times \, \text{with}\,\,r(b)=b,\, s(b)=b^{-1} \}$ where $c(a,b)_r = (1,1,b^{-1})$ and $c(a,b)_s = (a,1,1)$.
\end{exa}

See Table 7 of Appendix A. When $n=2$ or $n=3$, we summarized a conjugacy classes of $G$ in $\text{GL}(n,\mathbb{Z})$ that can be applied Theorem \ref{main thm of 3.3}. For these cases, $\coho{1}{G}{{}_{\varphi}\mathcal{T}(K)}$ can be computed as Example \ref{exa1 of 3.3} and \ref{exa2 of 3.3}.

\section{Some Classification of Twisted Form of Toric Variety}
For a cone $\sigma \subset N$, denote by $\sigma(1)$ the set of rays of $\sigma$. Also, denote by $\Sigma(1)$ the set of rays of a fan $\Sigma$. Unlike Section 3, we remove the assumption that $G$ is a subgroup of $\text{GL}(n,\mathbb{Z})$. In this section, we reveal the structure of $\coho{1}{G}{\text{Aut}_{\Sigma}^T}$ and classify $K/k$-forms of a quasi-projective toric variety $X_{\Sigma}$. This work begins by proving some useful results for $\text{Aut}_{\Sigma}$.

\begin{lem} 
A split toric variety $X_{\Sigma}$ has no torus factor if and only if $\operatorname{Aut}_{\Sigma}$ is a finite group.
\end{lem}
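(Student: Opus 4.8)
The plan is to route the whole equivalence through the standard combinatorial criterion for torus factors: $X_{\Sigma}$ has no torus factor if and only if the primitive ray generators $\{u_{\rho} : \rho \in \Sigma(1)\}$ span $N_{\mathbb{R}} := N \otimes_{\mathbb{Z}} \mathbb{R}$. I would recall (or quickly reprove) this: if the rays span only a proper rational subspace $W \subsetneq N_{\mathbb{R}}$, then taking $N' := N \cap W$ and a splitting $N = N' \oplus N''$ with $\operatorname{rank} N'' = r \geq 1$, all cones of $\Sigma$ lie in $W$, so $\Sigma$ is a fan in $N'$ and $X_{\Sigma} \cong X_{\Sigma, N'} \times \mathbb{T}_{N''}$ exhibits a torus factor; conversely a splitting $X_{\Sigma} \cong X' \times \mathbb{T}$ confines the rays to a proper subspace. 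With this in hand the lemma reduces to proving
\[ \operatorname{Aut}_{\Sigma} \text{ is finite} \iff \Sigma(1) \text{ spans } N_{\mathbb{R}}. \]

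For the implication ``spanning $\Rightarrow$ finite'' I would use that each $g \in \operatorname{Aut}_{\Sigma} \subset \operatorname{GL}(n,\mathbb{Z})$ is a lattice isomorphism permuting the cones of $\Sigma$, hence permuting the finite set $\Sigma(1)$ of one-dimensional cones. This gives a homomorphism $\operatorname{Aut}_{\Sigma} \to \operatorname{Sym}(\Sigma(1))$ to a finite symmetric group, and the crux is its injectivity: if $g$ fixes every ray then it sends each primitive generator $u_{\rho}$ to the primitive generator of $g(\rho) = \rho$, so $g(u_{\rho}) = u_{\rho}$ for all $\rho$. When the $u_{\rho}$ span $N_{\mathbb{R}}$ they contain an $\mathbb{R}$-basis, and a $\mathbb{Z}$-linear map fixing a basis is the identity; thus the kernel is trivial and $\operatorname{Aut}_{\Sigma}$ embeds in a finite group.

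For the contrapositive ``not spanning $\Rightarrow$ infinite'' I would construct an infinite family of fan automorphisms. Keeping $W$, $N = N' \oplus N''$ and $r \geq 1$ as above, write endomorphisms of $N$ in block form along this splitting; the shears $\left(\begin{smallmatrix} I & B \\ 0 & I \end{smallmatrix}\right)$ with $B \in \operatorname{Hom}(N'', N')$ integral lie in $\operatorname{GL}(n,\mathbb{Z})$, restrict to the identity on $W$, and therefore fix every cone of $\Sigma$ (all of which sit inside $W$), so they preserve $\Sigma$. These form an infinite subgroup of $\operatorname{Aut}_{\Sigma}$ whenever $\operatorname{rank} N' \geq 1$, i.e. whenever at least one ray exists. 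The only remaining case is the trivial fan $\Sigma = \{0\}$, where $X_{\Sigma} = \mathbb{T}_{N}$ and $\operatorname{Aut}_{\Sigma} = \operatorname{GL}(n,\mathbb{Z})$ is infinite for $n \geq 2$.

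The hard part will be the contrapositive: one must produce transformations preserving the entire fan rather than a single cone, and the naive idea of letting $\operatorname{GL}(N'')$ act on the torus-factor direction fails when $r = 1$, since $\operatorname{GL}(1,\mathbb{Z}) = \{\pm 1\}$ is finite. The shear construction, which exploits a ray direction $N'$ together with the torus direction $N''$, is precisely what supplies infinitely many automorphisms and bypasses this degeneracy.
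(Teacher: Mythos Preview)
Your proof is correct and follows the same route as the paper: both reduce the lemma to the criterion that $X_{\Sigma}$ has no torus factor iff $\Sigma(1)$ spans $N_{\mathbb{R}}$, then embed $\operatorname{Aut}_{\Sigma}$ into the symmetric group on rays for the finite direction and exhibit extra lattice automorphisms for the infinite direction. Your treatment of the infinite direction is in fact more complete than the paper's---you give the explicit shear construction and handle the trivial-fan edge case, whereas the paper simply asserts that infinitely many such $A$ exist.
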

\begin{proof}
Let $t$ be the number of rays in $\Sigma$ and $\Sigma(1)=\{u_i\,|\,i=1,\cdots,t\}$. It is well-known that $X_{\Sigma}$ has a torus factor if and only if $\Sigma(1)$ does not spans $\mathbb{Z}^n$ (\cite{cox}, Proposition 3.3.9). Take any $A\in \text{Aut}_{\Sigma}$. Since $A$ is a bijection from $\Sigma(1)$ to $\Sigma(1)$, there exist $\sigma \in S_t$ such that $Au_i = u_{\sigma(i)}$ for all $1\leq i \leq t$. When $X_{\Sigma}$ has no torus factor, such matrix $A$ uniquely exist since $\Sigma(1)$ spans $\mathbb{Z}^n$. So, $\text{Aut}_{\Sigma}$ is a subgroup of $S_t$. When $X_{\Sigma}$ has a torus factor, such $A$ infinitely many exists since $\Sigma(1)$ does not span $\mathbb{Z}^n$. So, $\text{Aut}_{\Sigma}$ is infinite.
\end{proof}

\begin{thm} \label{automor thm}
Let $\sigma \subset \mathbb{Z}^n$ be a strongly convex cone and $\Sigma$ be a fan consists of all faces of $\sigma$. Then, $\operatorname{Aut}_{\Sigma}$ is a subgroup of $D_{2t}$ where $t$ is the number of codimension 2 faces of $\sigma$.
\end{thm}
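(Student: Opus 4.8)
The plan is to identify $\operatorname{Aut}_{\Sigma}$ with a group of symmetries of a cyclically ordered configuration and then read off the dihedral bound. Since $\Sigma$ consists of all faces of the single cone $\sigma$, any $A \in \operatorname{Aut}_{\Sigma}$ must fix $\sigma$ (the unique maximal cone) and hence permute the faces of each fixed dimension; in particular it permutes the set $F$ of codimension-$2$ faces, which has $t$ elements. This yields a homomorphism $\rho : \operatorname{Aut}_{\Sigma} \to \operatorname{Sym}(F)$, and the whole argument comes down to pinning down the image and the kernel of $\rho$.

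For the image, the key point is that $F$ carries a \emph{canonical cyclic order}. I would produce it by slicing $\sigma$ with a transverse affine hyperplane to obtain a cross-section polytope $P$ whose proper faces correspond bijectively to the proper faces of $\sigma$. In the situation at hand $P$ is a polygon: its vertices are exactly the codimension-$2$ faces of $\sigma$ and its edges are the facets of $\sigma$. Thus each facet meets exactly two codimension-$2$ faces and each codimension-$2$ face lies on exactly two facets, so the incidence graph on $F$ (two codimension-$2$ faces adjacent when they share a facet) is the cycle $C_t$. This cyclic structure is intrinsic to $\sigma$, hence preserved by every $A$, so $\rho$ factors through $\operatorname{Aut}(C_t) = D_{2t}$.

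For the kernel, take $A \in \ker \rho$. Then $A$ fixes every codimension-$2$ face, which in this situation are precisely the rays of $\sigma$, and hence fixes each primitive ray generator. By the previous lemma the relevant case is the one in which $\operatorname{Aut}_{\Sigma}$ is finite, i.e.\ the rays span $\mathbb{Z}^n$; a lattice automorphism fixing a spanning set is the identity, so $A = \mathrm{id}$. Therefore $\rho$ is injective and $\operatorname{Aut}_{\Sigma} \hookrightarrow D_{2t}$, as claimed.

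The main obstacle is the middle step: proving that the codimension-$2$ faces genuinely assemble into a single cycle $C_t$ rather than a more complicated incidence graph. This is exactly where strong convexity and the polygonal shape of the cross-section enter — strong convexity guarantees an honest bounded cross-section $P$, and it is the fact that $P$ is combinatorially a polygon that forces the facet--ridge incidence to be a cycle and hence its symmetry group to be dihedral. I would therefore make the reduction to the polygonal cross-section fully explicit before invoking $\operatorname{Aut}(C_t) = D_{2t}$, since for higher-dimensional cross-sections the ridge graph need not be a cycle and the group need not be dihedral (for instance the standard orthant gives $\operatorname{Aut}_{\Sigma} \cong S_n$, which is not dihedral once $n \geq 4$); establishing that the cone is of the polygonal type is the crux of the verification.
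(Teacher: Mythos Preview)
Your architecture matches the paper's: form an incidence graph on the codimension-$2$ faces (two such faces adjacent when they lie in a common facet), argue this graph is a $t$-cycle, and conclude that $\operatorname{Aut}_{\Sigma}$ embeds in $\operatorname{Aut}(C_t)=D_{2t}$. The paper builds the graph abstractly, citing that each codimension-$2$ face lies in exactly two facets and that facets are simplicial; you realize the same graph as the boundary of a cross-section polygon. Your explicit treatment of the kernel of $\rho$ is an addition the paper omits --- it simply asserts that $\operatorname{Aut}_{\Sigma}$ is a subgroup of $D_{2t}$ without checking injectivity --- and your cross-section also gives connectedness of the graph for free, which the paper's $2$-regularity argument by itself does not.

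Your hesitation in the final paragraph is not a gap in your argument but a gap in the statement. As written for arbitrary $n$ the theorem is false, and your own orthant example proves it: for $\sigma=\operatorname{cone}(e_1,\dots,e_n)$ with $n=4$ one has $\operatorname{Aut}_{\Sigma}\cong S_4$ of order $24$, while $t=\binom{4}{2}=6$ and $\lvert D_{12}\rvert=12$. The paper's proof carries the same hidden restriction: the step ``since $\tau$ is simplicial'' is automatic only when facets have dimension at most $2$, i.e.\ when $n\le 3$. So there is nothing further to ``establish'' about the cone being of polygonal type; the result should simply be read for $n\le 3$, which is precisely how the paper uses it in the subsequent corollaries. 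Under that restriction your cross-section really is a polygon, the codimension-$2$ faces really are the rays, and your argument goes through as written.
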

\begin{proof}
In this proof, we call a codimension 1 face of $\sigma$ as a facet of $\sigma$. Let $S=\{v_i \, | \, i=1,\cdots,t\}$ be the set of codimension 2 faces of $\sigma$. Define a plane graph $\mathcal{G}$ on $S$ by connecting $v_i$ and $v_j$ if $v_i(1)\cup v_j(1)$ generates some facet of $\sigma$. 

Let $\tau$ be an arbitrary facet of $\sigma$. Since $\tau$ is simplicial (\cite{polytope}, Proposition 2.16), any subset of $\tau(1)$ generate a face of $\tau$. For any $v_i \in S$, there exists some $j \in \{1,\cdots,m\}$ such that $v_i(1)\cup v_j(1)=\tau(1)$. This implies that there is no isolated vertex of a graph $\mathcal{G}$. It is well-known that each codimension 2 face is an intersection of exactly two facets (\cite{combinatorial}, Theorem 1.10). This implies that each vertex of a graph $\mathcal{G}$ has exactly two edges. Therefore, the graph $\mathcal{G}$ becomes a $t$-gon. Since any element of $\text{Aut}_{\Sigma}$ preserves the symmetry of this $t$-gon, $\text{Aut}_{\Sigma}$ is a subgroup of $D_{2t}$.
\end{proof}

When $X_{\Sigma}$ is affine, we can reduce the candidates of $\text{Aut}_{\Sigma}$ by applying previous theorem..

\begin{cor}\label{automor cor}
Suppose that $X_{\Sigma}$ is an affine toric variety with no torus factor. If $n=2$ or $n=3$, then $\operatorname{Aut}_{\Sigma}$ is isomorphic to one of $\mathbb{Z}_2$, $\mathbb{Z}_3$, $\mathbb{Z}_4$, $\mathbb{Z}_6$, $D_4$, $D_6$, $D_8$, $D_{12}$ or trivial group.
\end{cor}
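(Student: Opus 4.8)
The plan is to combine the dihedral bound of Theorem~\ref{automor thm} with the elementary classification of subgroups of dihedral groups and with the crystallographic restriction on finite-order matrices in $\operatorname{GL}(n,\mathbb{Z})$. Since $X_{\Sigma}$ is affine, $\Sigma$ consists of all faces of a single strongly convex cone $\sigma$; since $X_{\Sigma}$ has no torus factor, $\Sigma(1)$ spans $\mathbb{Z}^{n}$, so $\sigma$ is full dimensional, and by the preceding lemma $\operatorname{Aut}_{\Sigma}$ is a finite subgroup of $\operatorname{GL}(n,\mathbb{Z})$. Note that the corollary only asserts an isomorphism type, so it suffices to produce an upper bound on $\operatorname{Aut}_{\Sigma}$; no realization of each group is needed.

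I would first dispose of $n=2$ by hand. A full dimensional strongly convex cone in $\mathbb{Z}^{2}$ has exactly two rays, and every element of $\operatorname{Aut}_{\Sigma}$ permutes $\Sigma(1)$, so $\operatorname{Aut}_{\Sigma}$ embeds into $S_{2}\cong\mathbb{Z}_{2}$ and is therefore trivial or $\mathbb{Z}_{2}$, both on the list. I keep this case separate because Theorem~\ref{automor thm} is genuinely a statement about $3$-dimensional $\sigma$: its graph is built on the codimension $2$ faces viewed as edges carrying rays, which requires $\dim\sigma=3$.

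For $n=3$ the cone $\sigma$ is $3$-dimensional, so Theorem~\ref{automor thm} applies and gives $\operatorname{Aut}_{\Sigma}\leq D_{2t}$, where $t$ is the number of edges of $\sigma$. Now I invoke the standard fact that every subgroup of a dihedral group $D_{2t}$ is either cyclic, lying inside the rotation subgroup $\mathbb{Z}_{t}$, or dihedral of the form $D_{2d}$ with $d\mid t$. Hence $\operatorname{Aut}_{\Sigma}\cong\mathbb{Z}_{d}$ or $\operatorname{Aut}_{\Sigma}\cong D_{2d}$ for some $d$, and in either case it contains a rotation generating a cyclic subgroup $\mathbb{Z}_{d}\leq\operatorname{GL}(3,\mathbb{Z})$.

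The decisive step is bounding $d$. A finite-order element of $\operatorname{GL}(3,\mathbb{Z})$ has all eigenvalues roots of unity, so by Kronecker's theorem its characteristic polynomial is a product of cyclotomic polynomials $\Phi_{m}$ of total degree $3$; since each factor then has degree $\deg\Phi_{m}\leq 3$, which forces $m\in\{1,2,3,4,6\}$, the order of the element, being the least common multiple of the occurring $m$, again lies in $\{1,2,3,4,6\}$. Applying this to the rotation of order $d$ forces $d\in\{1,2,3,4,6\}$. Reading off the two families, $\mathbb{Z}_{d}$ yields the trivial group together with $\mathbb{Z}_{2},\mathbb{Z}_{3},\mathbb{Z}_{4},\mathbb{Z}_{6}$, while $D_{2d}$ yields $D_{2}\cong\mathbb{Z}_{2}$ together with $D_{4},D_{6},D_{8},D_{12}$; the union is precisely the asserted list. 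The main obstacle is exactly this order bound on the rotation; once the crystallographic restriction is granted, the rest is a short combination of the dihedral bound and the subgroup classification, with no case analysis on the number of edges $t$ required.
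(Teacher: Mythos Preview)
Your proof is correct and follows the same overall architecture as the paper's: both invoke Theorem~\ref{automor thm} to place $\operatorname{Aut}_{\Sigma}$ inside a dihedral group and then restrict the possible isomorphism types using that $\operatorname{Aut}_{\Sigma}\subset\operatorname{GL}(n,\mathbb{Z})$. The difference lies only in how that last restriction is carried out. The paper simply appeals to the known classification tables of finite subgroups of $\operatorname{GL}(2,\mathbb{Z})$ (Newman) and $\operatorname{GL}(3,\mathbb{Z})$ (Tahara) to read off that the only dihedral groups occurring are $D_4,D_6,D_8,D_{12}$, and then lists their subgroups. You instead prove the needed order bound directly via the crystallographic restriction, factoring the characteristic polynomial into cyclotomics of degree at most $3$, and you handle $n=2$ by the elementary ray count rather than through Theorem~\ref{automor thm}. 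Your route is more self-contained and avoids citing the full classification, at the price of spelling out the $\Phi_m$ argument; the paper's route is quicker but leans on external references. Either way the substance is the same.
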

\begin{proof}
See the Table 2 of Appendix A and Tahars's paper\cite{tahara}. We see that $D_4$, $D_6$, $D_8$ and $D_{12}$ are only possible diheral subgroups of $\text{GL}(2,\mathbb{Z})$ and $\text{GL}(3,\mathbb{Z})$. By applying Theorem \ref{automor thm}, $\text{Aut}_{\Sigma}$ is isomorphic to one of the subgroup of $D_4$, $D_6$, $D_8$, $D_{12}$. The lists of subgroups of $D_4$, $D_6$, $D_8$ and $D_{12}$ are given as $\mathbb{Z}_2$, $\mathbb{Z}_3$, $\mathbb{Z}_4$, $\mathbb{Z}_6$, $D_4$, $D_6$, $D_8$, $D_{12}$ or trivial group. 
\end{proof}

\begin{cor}\label{automor cor2}
Let $\sigma \subset \mathbb{Z}^3$ be a 3-dimensional cone that has $t$ rays and $\Sigma$ be a fan consists of all faces of $\sigma$. If $t$ is an odd number, $\text{Aut}_{\Sigma}$ is isomorphic to one of $\mathbb{Z}_2$, $\mathbb{Z}_3$, $D_6$ or trivial group. If $t=3$, then $\operatorname{Aut}_{\Sigma}$ is exactly isomorphic to $D_6$.
\end{cor}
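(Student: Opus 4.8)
The plan is to read off $\operatorname{Aut}_{\Sigma}$ from Theorem \ref{automor thm} and then cut down the list of possibilities using the crystallographic restriction on finite subgroups of $\operatorname{GL}(3,\mathbb{Z})$. First I would note that for a $3$-dimensional cone $\sigma$ the codimension $2$ faces are exactly the one-dimensional faces, i.e. the rays, so the integer $t$ counting codimension $2$ faces in Theorem \ref{automor thm} is precisely the number $t$ of rays in the present statement. Hence that theorem already gives $\operatorname{Aut}_{\Sigma}\le D_{2t}$, realized as the symmetry group of the $t$-gon $\mathcal{G}$ constructed there, with rotation subgroup $\mathbb{Z}_t$.

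For the case where $t$ is odd I would classify which subgroups of $D_{2t}$ can embed in $\operatorname{GL}(3,\mathbb{Z})$. A subgroup $H\le D_{2t}$ is either contained in the rotation subgroup, so $H\cong\mathbb{Z}_d$ with $d\mid t$, or it contains a reflection, so $H\cong D_{2d}$ with $d=|H\cap\mathbb{Z}_t|$ and $d\mid t$. The key input is that every finite-order element of $\operatorname{GL}(3,\mathbb{Z})$ has order in $\{1,2,3,4,6\}$ (the crystallographic restriction, consistent with the classification of Tahara \cite{tahara} summarized in Appendix A). Since $t$ is odd, any rotation occurring in $\operatorname{Aut}_{\Sigma}$ has odd order dividing $t$ and lying in $\{1,2,3,4,6\}$, forcing $d\in\{1,3\}$. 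Substituting $d=1,3$ into the two cases yields the cyclic options $\{1\}$ and $\mathbb{Z}_3$ and the dihedral options $D_2\cong\mathbb{Z}_2$ and $D_6$, which is exactly the asserted list.

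For $t=3$ the graph $\mathcal{G}$ is a triangle, so the previous step already gives $\operatorname{Aut}_{\Sigma}\le D_6$, and the remaining task is the reverse inclusion: to show that each of the six symmetries of the triangle is induced by a lattice automorphism preserving $\sigma$. Because $\sigma$ is full-dimensional with exactly three rays $u_1,u_2,u_3$, these are linearly independent, so every permutation $\pi\in S_3$ determines a unique $\mathbb{R}$-linear map $A_\pi$ with $A_\pi u_i=u_{\pi(i)}$; this map automatically has determinant $\pm1$ and carries $\sigma$ and all its faces to themselves. Granting that each $A_\pi$ is integral, the assignment $\pi\mapsto A_\pi$ embeds $S_3\cong D_6$ into $\operatorname{Aut}_{\Sigma}$ and forces equality.

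The hard part will be this last verification, namely that every $A_\pi=U P_\pi U^{-1}$, with $U=[\,u_1\ u_2\ u_3\,]$ and $P_\pi$ the permutation matrix, actually preserves the lattice $N$ rather than merely the sublattice generated by $u_1,u_2,u_3$. When $\sigma$ is smooth this is immediate, since then $u_1,u_2,u_3$ form a $\mathbb{Z}$-basis of $N$ and the $A_\pi$ are honest permutation matrices; the delicate point, and the step I expect to absorb the real work, is controlling integrality of $A_\pi$ for a general simplicial cone, so I would isolate the integrality of these conjugates of permutation matrices as the crux of the argument.
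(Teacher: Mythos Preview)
Your treatment of the odd $t$ case is correct and essentially matches the paper's proof: the paper invokes Corollary~\ref{automor cor} (ultimately Tahara's list) to cut down to groups of the form $\mathbb{Z}_2,\mathbb{Z}_3,\mathbb{Z}_4,\mathbb{Z}_6,D_4,D_6,D_8,D_{12}$ and then excludes those containing elements of order $4$ or $6$ and the Klein four group $D_4$, while you go straight to the crystallographic restriction on element orders in $\operatorname{GL}(3,\mathbb{Z})$ and classify subgroups of $D_{2t}$; the two arguments are the same in substance.

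For $t=3$, you have put your finger on a real problem. The paper asserts that ``since $\sigma$ is $3$-dimensional, the rays of $\sigma$ form a $\mathbb{Z}$-basis of $\mathbb{Z}^3$'', which is exactly the smoothness hypothesis you flagged and is not implied by the statement. In fact the integrality you isolate as ``the crux'' genuinely fails for non-smooth simplicial cones, so the gap cannot be closed: take $\sigma\subset\mathbb{Z}^3$ with primitive ray generators $u_1=(1,0,0)$, $u_2=(0,1,0)$, $u_3=(1,1,3)$. Writing $U=[\,u_1\ u_2\ u_3\,]$, the transposition $u_1\leftrightarrow u_3$ forces
\[
A \;=\; U\begin{pmatrix}0&0&1\\0&1&0\\1&0&0\end{pmatrix}U^{-1}
\;=\;
\begin{pmatrix}1&0&0\\[2pt]1&1&-\tfrac{2}{3}\\[2pt]3&0&-1\end{pmatrix},
\]
which is not integral; similarly $u_2\leftrightarrow u_3$ and the two $3$-cycles fail, while $u_1\leftrightarrow u_2$ is the honest permutation matrix. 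Hence $\operatorname{Aut}_\Sigma\cong\mathbb{Z}_2$ here, not $D_6$. So the second assertion of the corollary is false as stated; it is correct only under the additional hypothesis that $\sigma$ is smooth, in which case your argument (and the paper's) goes through immediately since the $A_\pi$ are then literal permutation matrices.
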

\begin{proof}
By Theorem \ref{automor thm}, $\text{Aut}_{\Sigma}$ is isomorphic to one of the subgroup of $D_{2t}$. Also, the previous corollary implies that $\text{Aut}_{\Sigma}$ is isomorphic to one of $\mathbb{Z}_2$, $\mathbb{Z}_3$, $\mathbb{Z}_4$, $\mathbb{Z}_6$, $D_4$, $D_6$, $D_8$, $D_{12}$ or trivial group. If $t=1$, possible candidates of $\text{Aut}_{\Sigma}$ is a trivial group or $\mathbb{Z}_2$. Suppose that $t\geq 3$. There is no element of order 4 and order 6 in $D_{2t}$. So, $\mathbb{Z}_4$, $\mathbb{Z}_6$, $D_8$ and $D_{12}$ are excluded from the lists of the candidates of $\text{Aut}_{\Sigma}$. Since $D_4$ is not a subgroup of $D_{2t}$, $D_4$ is also excluded. Hence, $\text{Aut}_{\Sigma}$ is isomorphic to one of $\mathbb{Z}_2$, $\mathbb{Z}_3$ and $D_6$. Now, consider the case $t=3$. Since $\sigma$ is 3-dimensional, the rays of $\sigma$ form a $\mathbb{Z}$-basis of $\mathbb{Z}^3$. Hence, $\text{Aut}_{\Sigma}$ is same as the permutation of these rays, so it is isomorphic to $D_6$.
\end{proof}

Now, we suggest the algorithm for calculating $\coho{1}{G}{\text{Aut}_{\Sigma}^T}$ as follows.

\MyQuote{\noindent $<$ An algorithm to compute $\coho{1}{G}{\text{Aut}_{\Sigma}^T}$\,$>$
\newline Let $L$ be an intermediate field $k\subset L \subset K$ fixed by a subgroup $\ker{\varphi}\subset G$, $\overline{G}=G/\ker{\varphi} \simeq \text{Gal}(L/k)$ and $\overline{\varphi}:\overline{G} \rightarrow \text{Aut}_{\Sigma}\subset \text{GL}(n,\mathbb{Z})$ be an injective morphism induced by $\varphi$. We treat $\overline{G}$ as a subgroup of $\text{GL}(n,\mathbb{Z})$ by considering each element $g\in \overline{G}$ is a matrix $\overline{\varphi}(g) \in \text{GL}(n,\mathbb{Z})$.

\begin{enumerate}
\item Check the similarity condition of Theorem \ref{induced 1}, \ref{induced 2}, \ref{main thm of 3.3} for a Galois group $\overline{G}$.
\item Identify the form of generators of $\coho{1}{\overline{G}}{{}_{\overline{\varphi}}\mathcal{T}(L)}$ from Corollary \ref{generator form}, Theorem \ref{induced 2}-(2) and Remark \ref{rmk}.
\item Identify $C_{\text{Aut}_{\Sigma}}(\varphi(G))$-action from (\ref{matrix action}) and find $\coho{1}{\overline{G}}{{}_{\overline{\varphi}}\mathcal{T}(L)}/C_{\text{Aut}_{\Sigma}}(\varphi(G))$.
\item By taking disjoint union of these terms over all representatives of conjugacy classes of $\varphi$, we obtain $\coho{1}{G}{\text{Aut}_{\Sigma}^T}$
\end{enumerate}
}\label{algorithm}

In step (3), $\coho{1}{\overline{G}}{{}_{\overline{\varphi}}\mathcal{T}(L)}/C_{\text{Aut}_{\Sigma}}(\varphi(G))$ is isomorphic to $\coho{1}{G}{{}_{\varphi}\mathcal{T}(K)}/C_{\text{Aut}_{\Sigma}}(\varphi(G))$ by Proposition \ref{computation reduce}. The step (1) and (2) can be completed as examples of Section 3. The step (3) and (4) are not difficult, just simple calculation is needed. Here is the number of conjugacy classes of $\overline{G} \subset \text{GL}(n,\mathbb{Z})$ that can be applied Theorem \ref{induced 1}, \ref{induced 2} and \ref{main thm of 3.3}, see Appendix 5.
\begin{enumerate}
\item[$\cdot$] When $n=2$, all conjugacy classes of $\text{GL}(2,\mathbb{Z})$.
\item[$\cdot$] When $n=3$, 61 cases among 73 conjugacy classes of $\text{GL}(3,\mathbb{Z})$.\end{enumerate}

Since there are too many cases in above lists, we show the algorithm only when $n=3$ and $\text{Aut}_{\Sigma}\simeq D_6$. Other cases can be computed similarly. This is the table for conjugacy classes of subgroups of $\text{GL}(3,\mathbb{Z})$ which is isomorphic to $D_6$, see Tahara\cite{tahara}. The first matrix is a rotation and the second matrix is a relection of $D_6$. 
\begin{table}[ht]
\footnotesize{
\begin{tabular}{l|l}
\hline
\multirow{4}*{$W_5 =\left\{\begin{pmatrix} 1 & 0 & 0 \\ 0 & 0&-1 \\ 0&1&-1 \end{pmatrix} ,\begin{pmatrix} -1&0&0 \\ 0&0&-1 \\ 0&-1&0 \end{pmatrix} \right\}$} & \multirow{4}*{$W_6 =\left\{\begin{pmatrix} 1 & 0 & 0 \\ 0 & 0&-1 \\ 0&1&-1 \end{pmatrix} ,\begin{pmatrix} 1&0&0 \\ 0&0&1 \\ 0&1&0 \end{pmatrix} \right\}$} \\[3.5em] \hline
\multirow{4}*{$W_7 =\left\{\begin{pmatrix} 1 & 0 & 0 \\ 0 & 0&-1 \\ 0&1&-1 \end{pmatrix} ,\begin{pmatrix} -1&0&0 \\ 0&0&1 \\ 0&1&0 \end{pmatrix} \right\}$} & \multirow{4}*{$W_8 =\left\{\begin{pmatrix} 1 & 0 & 0 \\ 0 & 0&-1 \\ 0&1&-1 \end{pmatrix} ,\begin{pmatrix} 1&0&0 \\ 0&0&-1 \\ 0&-1&0 \end{pmatrix} \right\}$} \\[3.5em] \hline
\multirow{4}*{$W_9 =\left\{\begin{pmatrix} 0 & 1 & 0 \\ 0 & 0&1 \\ 1&0&0 \end{pmatrix} ,\begin{pmatrix} 0&0&-1 \\ 0&-1&0 \\ -1&0&0 \end{pmatrix} \right\}$} & \multirow{4}*{$W_{10} =\left\{\begin{pmatrix} 0 & 1 & 0 \\ 0 & 0&1 \\ 1&0&0 \end{pmatrix} ,\begin{pmatrix} 0&0&1 \\ 0&1&0 \\ 1&0&0 \end{pmatrix} \right\}$} \\[3.5em] \hline
\end{tabular}
\caption{Conjugacy classes of $\text{GL}(3,\mathbb{Z})$ which is isomorphic to $D_6$}}
\end{table} 

From now on, we denote $\coho{1}{\overline{G}}{{}_{\overline{\varphi}}\mathcal{T}(L)}/C_{\text{Aut}_{\Sigma}}(\varphi(G))$ simply as $H^1/H^0$.

\begin{thm} \label{D_6}
Suppose that $\varphi:G\rightarrow \operatorname{Aut}_{\Sigma}\subset \operatorname{GL}(3,\mathbb{Z})$ is a group homomorphism with $\operatorname{Aut}_{\Sigma} \simeq D_6$. Denote by $L$ the intermediate field $k \subset L \subset K$ which is fixed by a subgroup $\ker{\varphi}\subset G$. The table for $H^1/H^0$ is given as follows.

\begin{center}
\footnotesize{
\begin{tabular}{c|c|c|c|c|c|c|c|c}
\hline
$\operatorname{Aut}_{\Sigma}$ & $\varphi(G)$ & $H^1/H^0$ &$\operatorname{Aut}_{\Sigma}$ & $\varphi(G)$ & $H^1/H^0$ & $\operatorname{Aut}_{\Sigma}$ & $\varphi(G)$ & $H^1/H^0$ \\ \hline \hline
\multirow{3}*{$W_5$} & $\mathbb{Z}_2$ & $\operatorname{Br}(k|L)$ & \multirow{3}*{$W_7$} &$\mathbb{Z}_2$ & $\operatorname{Br}(k|L)$ & \multirow{3}*{$W_9$} & $\mathbb{Z}_2$ & $\operatorname{Br}(k|L)$  \\ \cline{2-3} \cline{5-6} \cline{8-9}
& $\mathbb{Z}_3$ & $\operatorname{Br}(k|L)$ & & $\mathbb{Z}_3$ & $\operatorname{Br}(k|L)$ & & $\mathbb{Z}_3$ & $\operatorname{trivial}$ \\ \cline{2-3} \cline{5-6} \cline{8-9}
& $D_6$ & $\operatorname{Br}(k|L^{\left<s\right>}) \oplus \operatorname{Br}(k|L^{\left<r\right>})$ & &$D_6$ & $M\oplus \operatorname{Br}(k|L^{\left<r\right>})$ & & $D_6$ & $\operatorname{Br}(L^{\left<sr\right>}|L)$ \\ \hline
\multirow{3}*{$W_6$} & $\mathbb{Z}_2$ & $\operatorname{trivial}$ & \multirow{3}*{$W_8$} &$\mathbb{Z}_2$ & $\operatorname{trivial}$ & \multirow{3}*{$W_{10}$} & $\mathbb{Z}_2$ & $\operatorname{trivial}$ \\ \cline{2-3} \cline{5-6} \cline{8-9}
& $\mathbb{Z}_3$ & $\operatorname{Br}(k|L)$ & & $\mathbb{Z}_3$ & $\operatorname{Br}(k|L)$ & & $\mathbb{Z}_3$ & $\operatorname{trivial}$ \\ \cline{2-3} \cline{5-6} \cline{8-9}
& $D_6$ & $M$ & & $D_6$ & $\operatorname{Br}(k|L^{\left<s\right>})$ & & $D_6$ & $\operatorname{trivial}$ \\ \hline
\end{tabular}}
\end{center}
When $\varphi(G) \simeq D_6$, let $r,s$ be any elements of $G$ such that $\varphi(r)$ has order 3 and $\varphi(s)$ has order 2. The group $M$ is defined as follows.
\[M = \frac{\{a\in L^\times \, | \, r(a)=a,s(a)=a^{-1}\}}{\{\norm{L}{L^{\left<r\right>}}{(b)}\,|\,b \in L^\times \,\, \text{with} \,\, \norm{L}{L^{\left<sr\right>}}{(b)}=1\}}\]
\end{thm}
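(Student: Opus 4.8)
The plan is to verify the table box by box, treating each conjugacy class $\operatorname{Aut}_\Sigma\in\{W_5,\dots,W_{10}\}$ together with each possible image $\varphi(G)$ as a separate case. First I would invoke Proposition \ref{computation reduce} to replace $(G,K)$ by $(\overline G,L)$, so that $\overline G=\varphi(G)$ sits injectively inside $\operatorname{GL}(3,\mathbb Z)$ as a subgroup of $\operatorname{Aut}_\Sigma$; by Theorem \ref{decent of toric} it then suffices to compute $\coho{1}{\overline G}{{}_{\overline\varphi}\mathcal T(L)}$ and divide by the centralizer action of (\ref{matrix action}). Since $\operatorname{Aut}_\Sigma\simeq D_6\simeq S_3$, the subgroups up to conjugacy are the trivial group (which gives a split torus, hence by Hilbert's Theorem 90 a trivial and therefore omitted row), $\mathbb Z_2$ generated by the displayed reflection $s$, $\mathbb Z_3$ generated by the displayed rotation $r$, and all of $D_6$. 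For each remaining box the mechanical step is the same: exhibit an explicit $T\in\operatorname{GL}(3,\mathbb Z)$ conjugating the generating matrices into whichever normal form --- block-triangular as in (\ref{block similarity}), fully induced, or diagonal --- triggers one of Theorems \ref{induced 1}, \ref{induced 2}, \ref{main thm of 3.3}, and then read off the cohomology and its generators from Corollary \ref{generator form}, Theorem \ref{induced 2}-(2), or Remark \ref{rmk}.

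For the $\mathbb Z_3$ rows I would diagonalize the order-$3$ rotation over $\mathbb Z$. For $W_5,\dots,W_8$ it is integrally similar to $[1]\oplus C$, with $C$ the companion matrix of $x^2+x+1$; splitting off the trivial summand by Theorem \ref{main thm of 3.3}-(1) and recognizing $C$ as the augmentation-ideal summand of the regular representation lets me apply Theorem \ref{induced 2} with $N$ trivial, so that the sequence $1\to{}_{\overline\varphi}\mathcal T\to R_{L/k}\mathbb G_m\to\mathbb G_m\to1$ together with Hilbert's Theorem 90 yields $\coho{1}{\mathbb Z_3}{{}_{\overline\varphi}\mathcal T(L)}\simeq\brau{k}{L}$. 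For $W_9,W_{10}$ the rotation is the $3$-cycle permutation matrix, i.e.\ $\operatorname{Ind}_{1}^{\mathbb Z_3}(\mathbb Z)$, so Theorem \ref{induced 1} and Shapiro's lemma give a split torus and a trivial group. The $\mathbb Z_2$ rows are handled the same way: the integral normal form of the involution $s$ is a sum of blocks $[1]$, $[-1]$ and $\left(\begin{smallmatrix}0&1\\1&0\end{smallmatrix}\right)$, contributing respectively $0$, $\brau{k}{L}$ (by Theorem \ref{main thm of 3.3}-(3)) and $0$ (an induced module, Theorem \ref{induced 1}); identifying the block type for each $s$ distinguishes the $\brau{k}{L}$ entries ($W_5,W_7,W_9$) from the trivial ones. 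In each $\mathbb Z_2$ and $\mathbb Z_3$ box I must still check that the centralizer $C_{\operatorname{Aut}_\Sigma}(\varphi(G))$, equal to $\mathbb Z_2$ respectively $\mathbb Z_3$, acts trivially on the generators, which is a direct substitution into (\ref{matrix action}).

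For the $D_6$ rows the centralizer is the center $Z(S_3)=1$, so $H^1/H^0=\coho{1}{\overline G}{{}_{\overline\varphi}\mathcal T(L)}$ and the content is the computation of $H^1$ itself. Here I would choose a $\mathbb Z$-basis putting $\{r,s\}$ into the block-triangular shape (\ref{block similarity}) with a $1$-dimensional and a $2$-dimensional block, apply Theorem \ref{main thm of 3.3}-(1) or (2) to separate them, read the $1$-dimensional factor off Theorem \ref{main thm of 3.3}-(3) (via Theorem \ref{1-dim}), and feed the $2$-dimensional factor into the $D_6\subset\operatorname{GL}(2,\mathbb Z)$ computation of Example \ref{induced 2 exa 2}, whose answer is the group $M$ of (\ref{M}); this is exactly the calculation already carried out for $W_5$ in Example \ref{exa2 of 3.3}. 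The fully induced case $W_9$ is instead handled directly by Theorem \ref{induced 1}, as in Example \ref{induced 1 exa}, giving $\brau{L^{\langle sr\rangle}}{L}$, while $W_{10}$ reduces to an induced module with trivial cohomology. The entries that are not immediately a single Brauer group I would resolve with Corollary \ref{cor of induced 2}, identifying $p^1$ with the norm-induced map $\eta$ and computing $\ker\eta$.

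The main obstacle I anticipate is twofold. First, producing the explicit conjugating matrices $T$ and verifying the simultaneous-similarity hypothesis: this is the step that decides which theorem applies and is genuinely case-dependent, since the six $W_i$ are $\mathbb Z$-conjugacy distinct even when their rotations agree, and --- as the analysis of $W_9$ shows --- whether a reflection carries a swap block $\left(\begin{smallmatrix}0&1\\1&0\end{smallmatrix}\right)$ or a pair $[-1]\oplus[-1]$ changes the answer without changing the rational eigenvalues. Second, and more delicate, is showing that the group $M$ of (\ref{M}) collapses to the tabulated Brauer group (to $\brau{k}{L^{\langle s\rangle}}$ for $W_8$, or into the $W_5$ entry, or to a trivial group) precisely in those boxes where it does; this requires the Hilbert-$90$-style norm manipulations of Example \ref{induced 2 exa 2}, rewriting a norm-$1$ element as $x\,s(x)$, and these simplifications are exactly the ones that are easy to get wrong. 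Verifying that the centralizer action is trivial in the $\mathbb Z_2$ and $\mathbb Z_3$ rows, by contrast, I expect to be routine.
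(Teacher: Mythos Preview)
Your plan is essentially the paper's own approach: reduce via Proposition \ref{computation reduce}, compute $\coho{1}{\overline G}{{}_{\overline\varphi}\mathcal T(L)}$ case by case using Theorems \ref{induced 1}, \ref{induced 2}, \ref{main thm of 3.3} (the paper packages these as Lemmas \ref{lem 1}--\ref{lem 3} in Appendix B), and then check that the centralizer $\{1,s\}$, $\{1,r,r^2\}$, or $\{1\}$ acts trivially, exactly as you outline.

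One point to correct in your second anticipated obstacle. The entries for $W_5$ and $W_8$ in the $D_6$ row are \emph{not} obtained by first computing $M$ and then showing it ``collapses'' to $\brau{k}{L^{\langle s\rangle}}$ via norm manipulations. Rather, the $2\times 2$ blocks of the reflection $s$ in $W_5,W_8$ (namely $\left(\begin{smallmatrix}0&-1\\-1&0\end{smallmatrix}\right)$) and in $W_6,W_7$ (namely $\left(\begin{smallmatrix}0&1\\1&0\end{smallmatrix}\right)$) lie in the two distinct $\operatorname{GL}(2,\mathbb Z)$-conjugacy classes of $D_6$ listed in Table 2 as $D_6$ and $D_6'$. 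Example \ref{induced 2 exa 2} treats only the $D_6'$ block and outputs $M$; the other block requires its own run of Theorem \ref{induced 2}-(2), carried out in the proof of Lemma \ref{lem 3}, and yields $\brau{k}{L^{\langle s\rangle}}$ directly. So the delicate step you flag is not an identity between $M$ and a Brauer group but simply recognizing which of the two $2$-dimensional $D_6$-lattices occurs in each $W_i$ and invoking the matching computation.
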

\begin{proof}
By the definition of $L$, the fields $L^{\left<r\right>}$, $L^{\left<s\right>}$ and $L^{\left<sr\right>}$ is not changed by the choice of $r$ and $s$. Denote $\overline{G}=G/\ker{\varphi}\simeq \varphi(G)\simeq \text{Gal}(L/k)$ and $\overline{\varphi}:\overline{G} \rightarrow \text{Aut}_{\Sigma}\subset \text{GL}(3,\mathbb{Z})$ be an injective group homomorphism induced by $\varphi$. We treat $\overline{G}$ as a subgroup of $\text{Aut}_{\Sigma}$ by considering each $g \in \overline{G}$ is a matrix $\overline{\varphi}(g) \in \text{GL}(3,\mathbb{Z})$. Since $H^1/H^0$ is invariant under the conjugacy classes of $\varphi$, we may assume that $\text{Aut}_{\Sigma}\subset \text{GL}(3,\mathbb{Z})$ is generated by one of the Table 1. For a notational convenience, we write $\varphi(r)$ and $\varphi(s)$ as just $r$ and $s$, respectively. Also, $\cdot$ denotes the $G$-action defined as (\ref{G-action}) and $*$ as a $\text{GL}(3,\mathbb{Z})$-action defined as (\ref{matrix action}). The step (1) and (2) of the algorithm (\ref{algorithm}) are introduced in Appendix B. In this proof, we use results of Appendix B and show only the step (3).

First, consider the case $\varphi(G) \simeq \mathbb{Z}_2$. Since all subgroups of $D_6$ of order 2 are conjugate each other, we may assume $\overline{G} =\{1,s\}$. When $\text{Aut}_{\Sigma}$ is generated by one of $W_6$, $W_8$ or $W_{10}$, $\coho{1}{\overline{G}}{{}_{\overline{\varphi}}\mathcal{T}(L)}$ is trivial by Lemma \ref{lem 1}. When $\text{Aut}_{\Sigma}$ is generated by one of $W_5$, $W_6$ or $W_7$, $\coho{1}{\overline{G}}{{}_{\overline{\varphi}}\mathcal{T}(L)} \simeq \brau{k}{L}$ and generators are written as $c(a)_s = (a,1,1)$ for each $a\in k^\times$. Note that $C_{\text{Aut}_{\Sigma}}(\varphi(G))=\{1,s\}$ and $t^{-1}c(a)_s (s \cdot t) = (s* c(a))_s$ for $t=(a,1,1)$. It means that $c(a)$ and $s*c(a)$ are cohomologous. Hence, there is only one $C_{\text{Aut}_{\Sigma}}(\varphi(G))$-orbits and so $H^1/H^0 \simeq \brau{k}{L}$. Next, consider the case $\varphi(G)\simeq \mathbb{Z}_3$. Since all subgroups of $D_6$ of order 3 are conjugate each other, we may assume $\overline{G} = \{1,r,r^2\}$. When $\text{Aut}_{\Sigma}$ is generated by $W_9$ or $W_{10}$, $\coho{1}{\overline{G}}{{}_{\overline{\varphi}}\mathcal{T}(L)}$ is trivial by Lemma \ref{lem 2}. When $\text{Aut}_{\Sigma}$ is generated by $W_5$,$W_6$,$W_7$ or $W_8$, $\coho{1}{\overline{G}}{{}_{\overline{\varphi}}\mathcal{T}(L)} \simeq \brau{k}{L}$ and generators are written as $c(a)_r=(1,1,a^{-1})$ for each $a \in k^\times$. Note that $C_{\text{Aut}_{\Sigma}}(\varphi(G)) = \{1,r,r^2\}$ and $t^{-1}c(a)_r(r\cdot t) = (r*c(a))_r$ for  $t=(1,1,a^{-1})$. So, there is only one $C_{\text{Aut}_{\Sigma}}(\varphi(G))$-orbit and $H^1/H^0\simeq \brau{K}{L}$. Finally, consider when $\varphi(G)\simeq D_6$. Since $C_{\text{Aut}_{\Sigma}}(\varphi(G))$ is trivial, $H^1/H^0$ is same as $\coho{1}{\overline{G}}{{}_{\overline{\varphi}}\mathcal{T}(L)}$ and we have results from Lemma \ref{lem 3}.
\end{proof}

Now, we can compute the number of isomorphism classes of $K/k$-forms of $X_{\Sigma}$. By Corollary \ref{automor cor2}, the table of Theorem \ref{D_6} classifies all $K/k$-forms of 3-dimensional affine toric varieties associated to a cone $\sigma \subset \mathbb{Z}^3$ generated by 3 rays. Here are the concrete description of step (4).

\begin{exa} \label{exa k-form}
Let $\sigma \subset \mathbb{Z}^3$ be a cone generated by $\{e_1=(1,0,-1),e_2=(0,-1,1),e_3=(-1,1,0)\}$ and $\Sigma$ be a fan consists of all faces of $\sigma$. Since all matrices of $W_9$ in Table 1 preserves a fan $\Sigma$, $\operatorname{Aut}_{\Sigma}\simeq D_6$ and generated by matrices of $W_9$. Let $r$ be the first matrix of $W_9$ and $s$ be the second matrix of $W_9$.

\begin{itemize}
\item[(1)] $k=\mathbb{R}$, $K=\mathbb{C}$
\end{itemize}
In this case, $\operatorname{Gal}(\mathbb{C}/\mathbb{R})=\{\text{id},c\}$ where $c$ is a conjugation map. There are two conjugacy classes of $\varphi:G \rightarrow \operatorname{Aut}_{\Sigma}\subset \operatorname{GL}(3,\mathbb{Z})$. When $\varphi(G)$ is trivial, clearly $H^1/H^0$ is trivial. When $\varphi(G)\simeq \mathbb{Z}_2$, we have $H^1/H^0 \simeq \operatorname{Br}(\mathbb{R}|\mathbb{C})\simeq \mathbb{Z}_2$ by Theorem \ref{D_6}. Hence,
\[
\{\text{isomorphism classes of $\mathbb{C}/\mathbb{R}$-forms of}\,\, X_{\Sigma}\,\} \simeq \{\cdot\} \amalg \operatorname{Br}(\mathbb{R}|\mathbb{C}) \simeq \{\cdot\}\amalg \{1,-1\}
\]
and there are 3 isomorphism classes of $\mathbb{C}/\mathbb{R}$-forms of $X_{\Sigma}$.

\begin{itemize}
\item[(2)] $k=\mathbb{Z}_2$, $K=\mathbb{Z}_2(w)$ where $w$ is an primitive 5th root of unity.
\end{itemize}
In this case, $G=\operatorname{Gal}(K/k)$ is a cyclic group of order 4. Let $c$ be a generator of $G$ defined by $c(w)=w^2$. There are two conjugacy classes of $\varphi:G \rightarrow \operatorname{Aut}_{\Sigma}\subset \operatorname{GL}(3,\mathbb{Z})$. When $\varphi(G)$ is trivial, clearly $H^1/H^0$ is trivial. Otherwise, $\varphi(c)=s$ and $\varphi(G)\simeq \mathbb{Z}_2$. By Theorem \ref{D_6}, we have $H^1/H^0 \simeq \operatorname{Br}(\mathbb{Z}_2|\mathbb{Z}_2^{\left<c^2\right>}) \simeq 1$. Hence, there are 2 isomorphism classes of $K/k$-forms of $X_{\Sigma}$. 
\begin{align*}
\{\text{isomorphism classes of $K/k$-forms of}\,\, X_{\Sigma}\,\}  \simeq \{\cdot\} \amalg  \{\cdot\}
\end{align*}
\end{exa}

\begin{exa}
Let $\sigma \subset \mathbb{Z}^3$ be a cone generated by $\{e_1=(1,0,1),e_2=(0,1,1),e_3=(1,1,0)\}$ and $\Sigma$ be a fan consists of all faces of $\sigma$. Since all matrices of $W_{10}$ in Table 1 preseves a fan $\Sigma$, $\operatorname{Aut}_{\Sigma}\simeq D_6$ and generated by matrices of $W_{10}$.

\begin{itemize}
\item[(1)] When $K/k$ is a degree 2 extension.
\end{itemize}
In this case, $\operatorname{Gal}(K/k)$ is isomorphic to $\mathbb{Z}_2$. There are two conjugacy classes of $\varphi$. When $\varphi(G)$ is trivial, $H^1/H^0$ is clearly trivial. When $\varphi(G)\simeq \mathbb{Z}_2$, $H^1/H^0$ is also trivial by Theorem \ref{D_6}. Hence,
\begin{align*}
\{ \text{isomorphism classes of $K/k$-forms of}\,\, X_{\Sigma}\,\} \simeq \{\cdot\}\amalg \{\cdot\}
\end{align*}
and there are 2 isomorphism classses of $K/k$-forms of $X_{\Sigma}$.

\begin{itemize}
\item[(2)] When $K/k$ is a degree 3 extension
\end{itemize}
In this case, $G=\operatorname{Gal}(K/k)$ is a cyclic group of order 3. Let $c$ be the generators of $G$. There are two conjugacy classes of $\varphi:G \rightarrow \operatorname{Aut}_{\Sigma}\subset \operatorname{GL}(3,\mathbb{Z})$. When $\varphi(G)$ is trivial, $H^1/H^0$ is a trivial group. Otherwise, $\varphi(G)\simeq \mathbb{Z}_3$ and $H^1/H^0$ is also trivial by Theorem \ref{D_6}. Hence, we have
\begin{align*}
\{\text{isomorphism classes of $K/k$-forms of}\,\, X_{\Sigma}\,\} \simeq \{\cdot\}\amalg \{\cdot\}
\end{align*}
\end{exa}

Now, we verify which $K/k$-forms of which split toric varieties can be classified via the algorithm (\ref{algorithm}). We follow the notations $L$, $\overline{G}$, $\overline{\varphi}$ described in (\ref{algorithm}). To see whether the algorithm (\ref{algorithm}) is applicable, we should check whether the conjugacy class of $\overline{G}$ is contained in Appendix A.

First, we give some restrictions to the group $G=\text{Gal}(K/k)$. Consider when $\overline{G}$ is one of the finite subgroup of $\text{GL}(2,\mathbb{Z})$. Since all conjugacy classes of all finite subgroups of $\text{GL}(2,\mathbb{Z})$ are covered by Appendix A, the algorithm (\ref{algorithm}) can be applied for any 2-dimensional split toric varieties. Consider when $\overline{G}$ is one of the finite subgroup of $\text{GL}(3,\mathbb{Z})$. All conjugacy classes of all finite cyclic groups of $\text{GL}(3,\mathbb{Z})$ are covered by Appendix 5. If $G$ is a finite cyclic group, $\overline{G}$ is also finite cyclic subgroup of $\text{GL}(3,\mathbb{Z})$. So, the algorithm (\ref{algorithm}) can be applied for all quasi-projective 3-dimensional split toric varieties when $K/k$ is a finite cyclic extension.

Now, we give some restrictions to the fan $\Sigma$ in $\mathbb{Z}^3$. Suppose that $\text{Aut}_{\Sigma}$ is a finite cyclic group or a dihedral group. Then, $\overline{G}\simeq \varphi(G) \subset \text{Aut}_{\Sigma}$ is also a finite cyclic group or a dihedral group. All conjugacy classes of finite cyclic groups and dihedral groups of $\text{GL}(3,\mathbb{Z})$ are covered by Appendix 5. Hence, the algorithm (\ref{algorithm}) can be applied for all quasi-projective 3-dimensional split toric varieties $X_{\Sigma}$ when $\text{Aut}_{\Sigma}$ is a finite cyclic or a dihedral group. By Corollary \ref{automor cor}, all 3-dimensional affine toric varieties with no torus factor belongs to these cases. Here are $K/k$-forms that can be classified via the algorithm (\ref{algorithm}).
\begin{enumerate}
\item[$\cdot$] $K/k$-forms of all split toric surfaces for any finite Galois extension $K/k$.
\item[$\cdot$] $K/k$-forms of all 3-dimensional quasi-projective split toric varieties when $K/k$ is a finite cyclic extension.
\item[$\cdot$] $K/k$-forms of all 3-dimensional quasi-projective split toric varieties $X_{\Sigma}$ such that $\text{Aut}_{\Sigma}$ is a finite cyclic group or a dihedral group for any finite Galois extension $K/k$. In particular, all 3-dimensional affine toric varieties with no torus factor belongs to these cases.
\end{enumerate}

Although we didn't check for conjugacy classes of $\text{GL}(n,\mathbb{Z})$ when $n\geq 4$, we expect that there are some cases that can be applied the algorithm (\ref{algorithm}). \newpage

\section*{Appendix A}
\subsection{Conjugacy classes of $\text{GL}(n,\mathbb{Z})$ that can be applied Theroems \ref{induced 1}, \ref{induced 2}, \ref{main thm of 3.3}}

As in Section 3, we assume that $G$ is a subgroup of $\text{GL}(n,\mathbb{Z})$. In this section, we summarized a conjugacy class of $G$ in $\text{GL}(n,\mathbb{Z})$, a subgroup $N$ of $G$ and an $N$-integer $\mathcal{U}$ that satisfy Theorem \ref{induced 1}, \ref{induced 2}, \ref{main thm of 3.3}. A classification of conjugacy classes of a finite subgroup of $\text{GL}(2,\mathbb{Z})$ has already been completed. This work was introduced by Morris Newman in his book\cite{newman} as follows.

\begin{table}[ht]
\footnotesize{
\begin{tabular}{ll|llll}
\hline
Generators  && Groups &  \\ \hline\hline &&& \\[-.8em]
\multirow{2}*{$A= \begin{pmatrix} 0&-1 \\ 1&1 \end{pmatrix}$}& \multirow{2}*{$B = \begin{pmatrix} 0 & -1 \\ 1 & 0 \end{pmatrix}$} & $C_1 = \left<I\right>$ & $C_{2} = \left<-I\right>$ & $D_2 = \left<C\right>$ & $D_2^{\prime} = \left<J\right>$ \\ 
& & $C_3 = \left<A^2\right>$ & $C_4 = \left<B\right>$ & $C_4 = \left<B\right>$ & $D_4 = \left<-I,C\right>$  \\[0.5em] 
\multirow{2}*{$C = \begin{pmatrix} 1 & 0 \\ 0 & -1 \end{pmatrix}$} & \multirow{2}*{$J = \begin{pmatrix} 0 & 1 \\ 1 & 0 \end{pmatrix}$} & $D_4^{\prime} = \left<J, -I \right>$ & $C_6 = \left<A\right>$ & $D_6 = \left<A^2,JA\right>$ & $D_6^{\prime} = \left<A^2,J\right>$ \\ 
&&$D_8 = \left<B,J\right>$ & $D_{12} = \left<A,J\right>$ \\[0.5em] \hline
\end{tabular}}
\caption{Finite subgroups of $\text{GL}(2,\mathbb{Z})$ and their generators}
\end{table} \vspace{-1em}
\noindent A classification of conjugacy classes of finite subgroups of $\text{GL}(3,\mathbb{Z})$ also has been completed by Tahara. Since there are too many cases, we use the notation $W_i$ appeared in \cite{tahara} instead of showing all lists. For each $g\in G$, $T_g$ denotes the linear operator defined in (\ref{def:operator}).

First, consider when $G$ is one of the groups in Table 2. In this case, $g_i$ denotes the $i$-th generator of $G$ appeared in Table 2 with keeping order. For example, $g_1=B$ and $g_2=J$ when $G=D_{8}$, $g_1=J$ and $g_2=-I$ when $G=D_4^{\prime}$. Next, consider when $G$ is one of the groups in Tahara's list\cite{tahara}. In this case, $g_i$ denotes the $i$-th generator of $G$ appeared in Tahara's list with keeping order. For example, when $G= D_6$ and its conjugacy class is $W_5$, $g_1$ and $g_2$ are given as follows, see Table 1.
\[
g_1 = \footnotesize{\begin{pmatrix} 1&0&0 \\ 0&0&-1 \\ 0&1&-1 \end{pmatrix}}, \,\, g_2 = \begin{pmatrix} -1&0&0 \\ 0&0&-1 \\ 0&-1&0 \end{pmatrix}
\] \newline
\noindent (1) The cases of Theorem \ref{induced 1}

First, consider when $G$ is a subgorup of $\text{GL}(2,\mathbb{Z})$. Table 3 summarize a conjugacy class of $G$, a subgroup $N$ of $G$ and an $N$-integer $\mathcal{U}$ that satisfy Theorem \ref{induced 1}. For each cases, the set $\mathcal{B}=\{e_1=1\otimes 1, e_2=g_1\otimes 1\}$ is a $\mathbb{Z}$-basis for $\ind{N}{G}{\mathcal{U}}$. By considering $T_{g_i}$ as its matrix representation with respect to $\mathcal{B}$, we can check $T_{g_i} = g_i$ for all $i$.

\begin{table}[ht]
\footnotesize{
\begin{tabular}{c|c|c}
\hline
$G$ & $N$ & $\mathcal{U}$ \\ \hline \hline
$D_8$ & $\left<g_1^{\,2}, \, g_2g_1\right>$ & $g_2g_1 \cdot 1 =1$, $g_1^{\,2} \cdot 1 = -1$ \\ \hline
$D_4^{\prime}$ & $\left<g_2\right>$ & $g_2 \cdot 1 = -1$  \\ \hline
$C_4$ & $\left<g_1^{\,2}\right>$ & $g_1^{\,2} \cdot 1 = -1$ \\ \hline
$D_2^{\prime}$ & trivial & trivial \\ \hline
\end{tabular}}
\caption{Conjugacy classes in $\text{GL}(2,\mathbb{Z})$ that can be applied Theorem \ref{induced 1}}
\end{table}
Next, consider when $G$ is a subgroup of $\text{GL}(3,\mathbb{Z})$. Table 4 summarize a conjugacy class of $G$, a subgroup $N$ of $G$, an $N$-integer $\mathcal{U}$ and $T \in \text{GL}(3,\mathbb{Z})$ that satisfy Theorem \ref{induced 1}. For $G=\mathbb{Z}_6,\, D_6, \, D_{12}, \, A_4, \, A_4 \times \mathbb{Z}_2$, the set $\mathcal{B}= \{e_1= 1\otimes 1, e_2 = g_1\otimes 1 ,e_3 = g_1^2\otimes 1\}$ is a $\mathbb{Z}$-basis for $\ind{N}{G}{\mathcal{U}}$. For $G=S_4,\, S_4\times \mathbb{Z}_2$, the set $\mathcal{B}=\{e_1=1\otimes 1, \, e_2= t\otimes 1,\, e_3 =t^2\otimes 1 \}$ is a $\mathbb{Z}$-basis for $\ind{N}{G}{\mathcal{U}}$ where $t=g_1^{\,3}g_2g_1^{\,2}$. By considering $T_{g_i}$ as its matrix representation with respect to $\mathcal{B}$, we can check $T_{g_i}=T^{-1}g_iT$ for all $i$.

\begin{table}[ht]
\footnotesize{
\begin{tabular}{c|c|c|c|c}
\hline
$G$ & conj class & $N$ & $\mathcal{U}$ & $T$ \\ \hline \hline &&&& \\[-.8em]
$\mathbb{Z}_6$ & $W_4$ & $\left<g_1^{\,3}\right>$ & $g_1^{\,3} \cdot 1 = -1$ & $\footnotesize{\begin{pmatrix} 0&0&1 \\ 0&-1&0 \\ 1&0&0 \end{pmatrix}}$ \\[2em] \hline  &&&& \\[-.8em]
\multirow{4}*{$D_6$} & $W_9$ & $\left<g_2\right>$ & $g_2 \cdot 1 =-1$ & $\footnotesize{ \begin{pmatrix} 0&1&0 \\ 1&0&0 \\ 0&0&1 \end{pmatrix}}$ \\[2em] \cline{2-5} \\[-.8em]
& $W_{10}$ & $\left<g_2\right>$ & $g_2 \cdot 1 =1$  & $\footnotesize{\begin{pmatrix} 0&1&0 \\ 1&0&0 \\ 0&0&1 \end{pmatrix}}$  \\[2em] \hline  &&&& \\[-.8em]
$D_{12}$ & $W_8$ & $\left<g_1^{\,3},g_2g_1\right>$ & $\begin{array}{c}  g_1^{\,3} \cdot 1 =-1 \\ g_2g_1\cdot 1 =1\end{array}$ & \footnotesize{$\begin{pmatrix} 1&0&0 \\ 0&0&-1 \\ 0&1&0 \end{pmatrix}$} \\[2em] \hline  &&&& \\[-.8em]
$A_4$ & $W_9$ & $\left<g_2,g_1^{-1}g_2g_1\right>$ & $\begin{array}{c} g_2\cdot 1 =1 \\ g_1^{-1}g_2g_1 \cdot 1 = -1 \end{array}$ & \footnotesize{$\begin{pmatrix} 0&1&0 \\ 1&0&0 \\ 0&0&1 \end{pmatrix}$}  \\[2em] \hline  &&&& \\[-.8em]
$A_4 \times \mathbb{Z}_2$ & $W_1$ & $\left<g_2,\, g_1^{-1}g_2g_1,\, g_3\right>$ & $\begin{array}{c} g_2 \cdot 1=1 \\ g_1^{-1}g_2g_1 \cdot 1=-1 \\ g_3\cdot 1 = -1 \end{array}$ & \footnotesize{$\begin{pmatrix} 0&1&0 \\ 1&0&0 \\ 0&0&1 \end{pmatrix}$}  \\[2em] \hline  &&&& \\[-.8em]
\multirow{4}*{$S_4$} & $W_6$ & $\left<g_1,\, g_1^{\,3}g_2g_1^{\,2}g_2\right>$ & $\begin{array}{c}g_1\cdot 1 =1 \\ g_1^{\,3}g_2g_1^{\,2}g_2 \, \cdot 1 =-1 \end{array}$ & \footnotesize{$\begin{pmatrix} 0&1&0 \\ 1&0&0 \\ 0&0&1\end{pmatrix}$} \\[2em] \cline{2-5} \\[-.8em]
& $W_7$ & $\left<g_1, \, g_1^{\,3}g_2g_1^{\,2}g_2\right>$ & $\begin{array}{c} g_1\cdot 1 = -1 \\ g_1^{\,3}g_2g_1^{\,2}g_2 \, \cdot 1 = 1 \end{array}$ & \footnotesize{$\begin{pmatrix} 0&1&0 \\ 1&0&0 \\ 0&0&1\end{pmatrix}$}  \\[2em] \hline  &&&& \\[-.8em] 
$S_4 \times \mathbb{Z}_2$ & $W_1$ & $\left<g_1, \, g_1^{\,3}g_2g_1^{\,2}g_2, \,g_3\right>$ & $\begin{array}{c} g_1\cdot 1 =1 \\ g_1^{\,3}g_2g_1^{\,2}g_2 \, \cdot 1 =-1 \\ g_3 \cdot 1 =-1 \end{array}$ & \footnotesize{$\begin{pmatrix} 0&1&0 \\ 1&0&0 \\ 0&0&1\end{pmatrix}$}  \\[2em] \hline
\end{tabular}}
\caption{Conjugacy classes in $\text{GL}(3,\mathbb{Z})$ that can be applied Theorem \ref{induced 1}}
\end{table}

\noindent (2) The cases of Theorem \ref{induced 2} and Corollary \ref{cor of induced 2}

First, consider when $G$ is a subgroup of $\text{GL}(2,\mathbb{Z})$. Table 5 summarize a conjugacy class of $G$, a subgroup $N$ of $G$, a $G$-integer $\mathcal{U}$ and $T\in \text{GL}(3,\mathbb{Z})$ that satisfy the similarity condition of Theorem \ref{induced 2}. For each case, the set $\mathcal{B}=\{e_1=1\otimes 1 , \, e_2=g_1\otimes 1, \, e_3= g_1^{\,2}\otimes 1\}$ is a $\mathbb{Z}$-basis for $\ind{N}{G}{\mathcal{U}}$. By considering $T_{g_i}$ as its matrix representation with respect to $\mathcal{B}$, we can check $T^{-1}T_{g_i}T$ satisfy the similarity condition of Theorem \ref{induced 2}. The cases $G=D_6^{\prime},C_3$ can be computed by Theorem \ref{induced 2}-(2) and the cases $G=D_{12},D_6,C_6$ can be computed by Corollary \ref{cor of induced 2}.

\begin{table}[ht]
\footnotesize{
\begin{tabular}{c|c|c|c|c|c|c|c}
\hline
$G$ & $N$ & $\mathcal{U}$ & $T$ & $G$ & $N$ & $\mathcal{U}$ & $T$ \\ \hline \hline 
&&&&&&& \\[-.8em]
$D_{12}$ & $\left<g_1^{\,3}, \, g_2\right>$ & $g_1\cdot 1 =-1, \, g_2\cdot 1 =1$ & \footnotesize{$\begin{pmatrix} 1&1&1 \\ 0&0&1 \\ 0&-1&0 \end{pmatrix}$} & $C_6$ & $\left<g_1^{\,3}\right>$ & $g_1 \cdot 1 =-1$ & \footnotesize{$\begin{pmatrix} 1&1&1 \\ 0&0&1 \\ 0&-1&0 \end{pmatrix}$} \\[2em] \hline
&&&&&&& \\[-.8em]
$D_6$ & $\left<g_2g_1\right>$ & $g_1 \cdot 1 =1,\, g_2 \cdot 1 =-1$ & \footnotesize{$\begin{pmatrix} 1&0&1 \\ 0&-1&-1 \\ 0&1&0 \end{pmatrix}$} & $C_3$ & trivial & $g_1 \cdot 1 = 1$ & \footnotesize{$\begin{pmatrix} 1&1&1 \\ 0&-1&0 \\ 0&0&-1 \end{pmatrix}$}  \\[2em] \hline
&&& \\[-.8em]
$D_6^{\prime}$ & $\left<g_2\right>$ & $g_1 \cdot 1 = 1,\, g_2 \cdot 1 = 1$ & \footnotesize{$\begin{pmatrix} 1&1&1 \\ 0&-1&0 \\ 0&0&-1 \end{pmatrix}$} \\[2em] \hline
\end{tabular}}
\caption{Conjugacy classes in $\text{GL}(2,\mathbb{Z})$ that can be applied Theorme \ref{induced 2} and Corollary \ref{cor of induced 2}}
\end{table}

Next, consider when $G$ is a subgroup of $\text{GL}(3,\mathbb{Z})$. Only for $G\simeq D_8$, redefine $g_1=A$ and $g_2=BA$, where $A$ is the first generator of $G$ and $B$ the second generator of $G$ appeared in Tahara's list \cite{tahara}. For example, when $G\simeq D_8$ and its conjugacy class is $W_{11}$, $g_1$ and $g_2$ are given as follows, see Tahara\cite{tahara}.
\[
g_1 = \footnotesize{\begin{pmatrix} 1&0&1 \\ 0&0&-1 \\ 0&1&0 \end{pmatrix}},\,\, g_2 = \begin{pmatrix} -1&0&0 \\ 0&0&-1 \\ 0&-1&0 \end{pmatrix} \begin{pmatrix}1&0&1 \\ 0&0&-1 \\ 0&1&0 \end{pmatrix} = \begin{pmatrix} -1&0&-1 \\ 0&-1&0 \\ 0&0&1 \end{pmatrix}
\] 
Table 6 summarize the subgroup $N$ of $G$, an $N$-integer $\mathcal{U}$ and $T\in \text{GL}(4,\mathbb{Z})$ that satisfy the similarity condition of Theorem \ref{induced 2}.

Let $\mathcal{B}$ denote the $\mathbb{Z}$-basis for $\ind{N}{G}{\mathcal{U}}$. For $G=\mathbb{Z}_2\times \mathbb{Z}_2,\, \mathbb{Z}_2\times \mathbb{Z}_2 \times \mathbb{Z}_2$, we take $\mathcal{B} = \{e_1=1\otimes 1, \, e_2=g_1\otimes 1,\, e_3=g_2\otimes 1,\, e_4 = g_1g_2\otimes 1\}$. For $G=\mathbb{Z}_2 \times \mathbb{Z}_4,\, D_8$, we take $\mathcal{B} =\{e_1=1\otimes 1,\, e_2= g_1\otimes 1,\, e_3= g_1^{\,2}\otimes 1, \, e_4 = g_1^{\,3}\otimes 1 \}$. For $G=A_4,\, A_4\times \mathbb{Z}_2$, we take $\mathcal{B}= \{e_1=1\otimes 1,\, e_2=g_2\otimes 1, \, e_3 = g_1^{-1}g_2g_1 \otimes 1,\, e_4 = g_1g_2g_1^{-1}\otimes 1\}$. By considering $T_{g_i}$ as its matrix representation with respect to $\mathcal{B}$, we can check $T^{-1}T_{g_i}T$ saistfy the similarity condition of Theorem \ref{induced 2}. Theorem \ref{induced 2}-(2) can be applied to the following: $W_{12}$,$W_{13}$ of $G=\mathbb{Z}_2\times \mathbb{Z}_2,$, $W_{11}$ of $G=A_4$, $W_{12}$ and $W_{14}$ of $G=D_8$. And Corollary \ref{cor of induced 2} can be applied to the following: $W_2$ of $G=\mathbb{Z}_4\times \mathbb{Z}_2$, $W_5$ of $G=\mathbb{Z}_2\times \mathbb{Z}_2 \times \mathbb{Z}_2$, $W_{11}$ and $W_{13}$ of $G=D_8$, $W_{3}$ of $G=A_4 \times \mathbb{Z}_2$. \newpage

\begin{table}[ht]
\footnotesize{
\begin{tabular}{c|c|c|c|c}
\hline
$G$ & conj class & $N$ & $\mathcal{U}$ & $T$ \\ \hline \hline 
\multirow{4}*{$\mathbb{Z}_2 \times \mathbb{Z}_2$} & $W_{12}$ & trivial & $\begin{array}{c} g_1 \cdot 1 =1 \\ g_2\cdot 1 =1 \end{array}$ & \footnotesize{$\begin{pmatrix} 1&1&1&0 \\ 0&-1&0&1 \\ 0&0&0&-1 \\ 0&0&-1&0  \end{pmatrix}$} \\ \cline{2-5}
& $W_{13}$ & trivial & $\begin{array}{c} g_1 \cdot 1 =1 \\ g_2 \cdot 1 =-1 \end{array}$ & \footnotesize{$\begin{pmatrix} 1&1&1&0 \\ 0&-1&0&1 \\ 0&0&0&1 \\ 0&0&1&0 \end{pmatrix}$}\\ \hline
$\mathbb{Z}_4\times \mathbb{Z}_2$ & $W_2$ & $\left<g_2\right>$ & $\begin{array}{c} g_1\cdot 1 = -1 \\ g_2 \cdot 1 =-1 \end{array}$ & \footnotesize{$\begin{pmatrix} 1&1&0&0 \\ 0&1&1&0 \\ 0&1&1&1 \\ 0&1&0&1 \end{pmatrix}$} \\ \hline
$\mathbb{Z}_2\times \mathbb{Z}_2 \times \mathbb{Z}_2$ & $W_5$ & $\left<g_3\right>$ & $\begin{array}{c} g_1\cdot 1 = 1 \\ g_2\cdot 1 =1 \\ g_3\cdot 1 = -1 \end{array}$ & \footnotesize{$\begin{pmatrix} 1&1&1&0 \\ 0&-1&0&1 \\ 0&0&0&-1 \\ 0&0&-1&0  \end{pmatrix}$} \\ \hline 
\multirow{12}*{$D_8$} & $W_{11}$ & $\left<g_2g_1\right>$ & $\begin{array}{c} g_1\cdot 1 = -1 \\ g_2 \cdot 1 =1 \end{array}$ & \footnotesize{$\begin{pmatrix} 1&1&0&1 \\ 0&1&0&0 \\ 0&1&1&0 \\ 0&1&1&1 \end{pmatrix}$} \\ \cline{2-5}
& $W_{12}$ & $\left<g_2g_1\right>$ & $\begin{array}{c} g_1\cdot 1 =-1 \\ g_2\cdot 1 = -1 \end{array}$ & \footnotesize{$\begin{pmatrix} 1&1&0&1 \\ 0&1&0&0 \\ 0&1&1&0 \\ 0&1&1&1 \end{pmatrix}$} \\ \cline{2-5} 
& $W_{13}$ & $\left<g_2g_1\right>$ & $\begin{array}{c} g_1\cdot 1 = 1 \\ g_2\cdot 1 = -1 \end{array}$ & \footnotesize{$\begin{pmatrix} 1&1&0&1 \\ 0&-1&0&0 \\ 0&1&1&0 \\ 0&-1&-1&-1  \end{pmatrix}$} \\ \cline{2-5}
& $W_{14}$ & $\left<g_2g_1\right>$ & $\begin{array}{c} g_1 \cdot 1 =1 \\ g_2\cdot 1 =1 \end{array}$ & \footnotesize{$\begin{pmatrix} 1&1&0&1 \\ 0&-1&0&0 \\ 0&1&1&0 \\ 0&-1&-1&-1 \end{pmatrix}$} \\ \hline
$A_4$ & $W_{11}$ & $\left<g_1\right>$ & $\begin{array}{c} g_1\cdot 1 =1 \\ g_2\cdot 1 =1 \end{array}$ & \footnotesize{$\begin{pmatrix} 2&1&1&1 \\ -1&-1&0&0 \\ -1&0&-1&0 \\ -1&0&0&-1 \end{pmatrix}$} \\ \hline
$A_4 \times \mathbb{Z}_2$ & $W_3$ & $\left<g_1,g_3\right>$ & $\begin{array}{c} g_1\cdot 1 = 1 \\ g_2\cdot 1 =1 \\ g_3 \cdot 1 =-1 \end{array}$ & \footnotesize{$\begin{pmatrix} 2&1&1&1 \\ -1&-1&0&0 \\ -1&0&-1&0 \\ -1&0&0&-1 \end{pmatrix}$} \\ \hline
\end{tabular}}
\caption{Conjugacy classes in $\text{GL}(3,\mathbb{Z})$ that can be applied Theorem \ref{induced 2} and Corollary \ref{cor of induced 2}}
\end{table} \newpage

\noindent(3) The cases of Theorem \ref{main thm of 3.3}

First, consider when $G$ is a subgroup of $\text{GL}(2,\mathbb{Z})$. Since the form of generators of $D_4$, $D_2$, $C_2$, $C_1$ are given as (\ref{block similarity}), Theorem \ref{main thm of 3.3} holds for these cases. Next, consider when $G$ is a subgroup of $\text{GL}(3,\mathbb{Z})$. Here is the table for conjugacy classes of $\text{GL}(3,\mathbb{Z})$ that can be applied Theorem \ref{main thm of 3.3}.

\begin{table}[h]
\begin{center}
\footnotesize{
\begin{tabular}{c|c|c|c|c|c}
\hline
$G$ & conj class & $G$ & conj class & $G$ & conj class \\ \hline \hline
$\mathbb{Z}_2$ & $W_1 \sim W_5 $ & $\mathbb{Z}_3$ & $W_1$ & $\mathbb{Z}_4$ & $W_1 \sim W_4$ \\ \hline
$\mathbb{Z}_2 \times \mathbb{Z}_2$ & $W_5 \sim W_{11}$ & $\mathbb{Z}_6$ & $W_1 \sim W_3$ & $D_6$ & $W_5 \sim W_8$  \\ \hline
$\mathbb{Z}_4 \times \mathbb{Z}_2$ & $W_1$ & $\mathbb{Z}_2\times \mathbb{Z}_2 \times \mathbb{Z}_2$ & $W_3 \sim W_4$ & $D_8$ & $W_7 \sim W_{10}$ \\ \hline
$\mathbb{Z}_6 \times \mathbb{Z}_2$ & $W_1$ & $D_{12}$ & $W_2 \sim W_7$ & $D_8 \times \mathbb{Z}_2$ & $W_1 \sim W_2$ \\ \hline
$D_{12} \times \mathbb{Z}_2$ & $W_4 \sim W_5$ & & & \\\hline
\end{tabular}}
\caption{Conjugacy classes in $\text{GL}(3,\mathbb{Z})$ that can be applied Theorem \ref{main thm of 3.3}}
\end{center}
\end{table}

\subsection{The number of real forms of 3-dimensional split toric varieties}

Consider when a fan $\Sigma$ is in $\mathbb{Z}^3$, $K=\mathbb{C}$ and $k=\mathbb{R}$. Suppose that $\text{Aut}_{\Sigma}$ is a finite cyclic group or a dihedral group. By applying the algorithm (\ref{algorithm}), we can write $\coho{1}{G}{\text{Aut}_{\Sigma}^T}$ in terms of Brauer groups. Using $\brau{\mathbb{R}}{\mathbb{C}}\simeq \mathbb{Z}_2$, we get the number of $\mathbb{C}/\mathbb{R}$-forms of $X_{\Sigma}$ as follows. Denote this number by $\#$.

\begin{table}[ht]
\renewcommand{\arraystretch}{1.3}
\footnotesize{
\begin{tabular}{c|c|c|c|c|c|c|c|c}
\hline
$\text{Aut}_{\Sigma}$ & conj class & \#  & $\text{Aut}_{\Sigma}$ & conj class & \# & $\text{Aut}_{\Sigma}$ & conj class & $\#$  \\ \hline \hline
\multirow{4}*{$\mathbb{Z}_2$} & \multirow{1}*{$W_4$} & 2 & \multirow{2}*{$\mathbb{Z}_3$} & \multirow{1}*{$W_2$} & 2 & \multirow{4}*{$D_{12}$} & \multirow{1}*{$W_3$} & 5  \\ \cline{2-3} \cline{5-6} \cline{8-9}
& \multirow{1}*{$W_2, W_3$} & 3 & & \multirow{1}*{$W_1$} & 3 & & \multirow{1}*{$W_4, W_5$} & 6 \\ \cline{2-3} \cline{4-6} \cline{8-9}
& \multirow{1}*{$W_1$} & 5 & \multirow{2}*{$\mathbb{Z}_4$} & \multirow{1}*{$W_3, W_4$} & 3 & & \multirow{1}*{$W_2$} & 7 \\ \cline{2-3} \cline{5-6} \cline{8-9}
& \multirow{1}*{$W_5$} & 9 & & \multirow{1}*{$W_1, W_2$} & 4 & & \multirow{1}*{$W_6, W_7, W_8$} & 8 \\ \hline
\multirow{4}*{$\mathbb{Z}_2 \times \mathbb{Z}_2$} & \multirow{1}*{$W_{13}, W_{15}$} & 5 & \multirow{4}*{$\mathbb{Z}_2 \times \mathbb{Z}_2$} & \multirow{1}*{$W_7$} & 9 & \multirow{2}*{$\mathbb{Z}_6$} & \multirow{1}*{$W_1, W_2$} & 3 \\ \cline{2-3} \cline{5-6} \cline{8-9}
& \multirow{1}*{$W_9, W_{10}$} & 6 & & \multirow{1}*{$W_{11}$} & 10 & & \multirow{1}*{$W_3, W_4$} & 5 \\ \cline{2-3} \cline{5-6} \cline{7-9}
& \multirow{1}*{$W_{12}, W_{14}$} & 7 & & \multirow{1}*{$W_{6}$} & 13 & \multirow{2}*{$D_6$} & \multirow{1}*{$W_6, W_8, W_{10}$} & 2 \\ \cline{2-3} \cline{5-6} \cline{8-9}
& \multirow{1}*{$W_8$} & 8 & & \multirow{1}*{$W_5$} & 15 & & \multirow{1}*{$W_5, W_7, W_9$} & 3 \\ \hline
\multirow{2}*{$D_8$} & \multirow{1}*{$W_{12}$} & 5 & \multirow{2}*{$D_8$} & \multirow{1}*{$W_8, W_{11}$} & 7 & \multirow{2}*{$D_8$} & \multirow{1}*{$W_{10}$} & 9 \\ \cline{2-3} \cline{5-6} \cline{8-9} 
& \multirow{1}*{$W_{13},W_{14}$} & 6 & & \multirow{1}*{$W_9$} & 8 & & \multirow{1}*{$W_{7}$} & 10 \\\hline
\end{tabular}}
\end{table}

\section*{Appendix B} 
The table of Theorem \ref{D_6} was made through the algorithm (\ref{algorithm}). In Appendix B, we summarize the step (1) and (2) of the algorithm (\ref{algorithm}) in this theorem. These steps can be completed as Examples of Section 3. For each $g\in G$, let $T_g$ be a linear operator defined as (\ref{def:operator}). And we follow the notation $L$, $\overline{G}$, $\overline{\varphi}$ in (\ref{algorithm}) and assume $\overline{G}$ is a subgroup of $\text{GL}(3,\mathbb{Z})$ via $\overline{\varphi}$.

\begin{lem}\label{lem 1}
Suppose that $\overline{G}=\operatorname{Gal}(L/k)\simeq \mathbb{Z}_2 \leq \operatorname{GL}(3,\mathbb{Z})$ and let $s$ be a generator of $\overline{G}$. Then,
\end{lem}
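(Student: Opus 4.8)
The plan is to handle the six relevant conjugacy classes $W_5,\dots,W_{10}$ simultaneously by reducing the single involution $s=\overline{\varphi}(s)\in\operatorname{GL}(3,\mathbb{Z})$ to a block form to which the machinery of Section 3 applies. Since $s^2=I$, the cocharacter lattice $\mathbb{Z}^3$ is a $\mathbb{Z}[\mathbb{Z}_2]$-module, and for each $W_i$ I would first produce an explicit $T\in\operatorname{GL}(3,\mathbb{Z})$ conjugating $s$ into $\operatorname{diag}(\varepsilon)\oplus\left(\begin{smallmatrix}0&1\\1&0\end{smallmatrix}\right)$ with $\varepsilon\in\{+1,-1\}$. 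Reading off the second (reflection) generators in Table 1, one finds $\varepsilon=-1$ for $W_5,W_7,W_9$ and $\varepsilon=+1$ for $W_6,W_8,W_{10}$; for instance the reflection of $W_5$ already splits off $f_1\mapsto -f_1$ together with $f_2,f_3\mapsto -f_3,-f_2$, the latter block being $\operatorname{GL}(2,\mathbb{Z})$-conjugate to $\left(\begin{smallmatrix}0&1\\1&0\end{smallmatrix}\right)$ via $\operatorname{diag}(1,-1)$. This verifies the block hypothesis of Theorem \ref{main thm of 3.3}(1) with $C=0$.

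With $s$ in this form, Theorem \ref{main thm of 3.3}(1) splits $\coho{1}{\overline{G}}{{}_{\overline{\varphi}}\mathcal{T}(L)}$ as the direct sum of the cohomology of the rank-one block $\operatorname{diag}(\varepsilon)$ and that of the rank-two transposition block. For the transposition block I would invoke Theorem \ref{induced 1} with subgroup $N=\{1\}$ and the trivial $N$-integer, since $\left(\begin{smallmatrix}0&1\\1&0\end{smallmatrix}\right)$ is exactly the matrix of $T_s(\{1\},\mathbb{Z})$ on the basis $\{1\otimes 1,\,s\otimes 1\}$; Theorem \ref{induced 1}(2) then identifies its contribution with $\brau{L}{L}=1$. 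For the rank-one block Theorem \ref{1-dim} applies directly: when $\varepsilon=+1$ the canonical subgroup is all of $\overline{G}$, giving $\brau{k}{k}=1$, whereas when $\varepsilon=-1$ the canonical subgroup is trivial, giving $\brau{k}{L}$. Hence $\coho{1}{\overline{G}}{{}_{\overline{\varphi}}\mathcal{T}(L)}\simeq\brau{k}{L}$ for $W_5,W_7,W_9$ and is trivial for $W_6,W_8,W_{10}$. The stated generator $c(a)_s=(a,1,1)$ for $a\in k^\times$ is then the image under Remark \ref{rmk}(1) of the generator of the $\brau{k}{L}$-factor furnished by Theorem \ref{1-dim}, transported to the original coordinates by the $T$-action of (\ref{matrix action}).

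The step I expect to be genuinely delicate is the integral decomposition, not the cohomological bookkeeping. Over $\mathbb{Q}$ each of these involutions has eigenvalues $\{+1,-1,-1\}$ or $\{+1,+1,-1\}$, but the rational representation does not determine the lattice: a priori the $-1$-isotypic part could split as two copies of the sign module (which would wrongly produce $\brau{k}{L}$ twice) rather than as a sign module plus the regular module $\mathbb{Z}[\mathbb{Z}_2]$. The resolution is that the $\pm1$-eigenvectors of a transposition block span only an index-two sublattice, so that block is the indecomposable regular lattice $\mathbb{Z}[\mathbb{Z}_2]$ and contributes nothing; pinning this down is precisely what the explicit choice of $T$ for each $W_i$ accomplishes, and it is what forces the answer to be a single copy of $\brau{k}{L}$ in the nontrivial cases.
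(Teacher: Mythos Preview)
Your proposal is correct and follows essentially the same route as the paper: both arguments block-diagonalize the involution $s$ over $\mathbb{Z}$ as a rank-one piece $\operatorname{diag}(\varepsilon)$ plus the regular $\mathbb{Z}[\mathbb{Z}_2]$-lattice $\left(\begin{smallmatrix}0&1\\1&0\end{smallmatrix}\right)$, then invoke Theorem~\ref{main thm of 3.3} to split the cohomology and Theorem~\ref{induced 1} (with $N=\{1\}$) to kill the transposition block. The only cosmetic differences are that the paper selects one already-block-diagonal representative per column and appeals to ``all matrices on this side are similar'' rather than conjugating each $W_i$ explicitly, and that for $\varepsilon=+1$ it quotes Theorem~\ref{main thm of 3.3}(2) instead of (1); your closing paragraph on why the integral lattice must be $\mathbb{Z}_\varepsilon\oplus\mathbb{Z}[\mathbb{Z}_2]$ rather than $\mathbb{Z}_\varepsilon\oplus\mathbb{Z}_-^2$ is a welcome justification of precisely the similarity assertion the paper leaves implicit.
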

\begin{center}
\begin{table}[ht]
\footnotesize{
\begin{tabular}{c|c|c|c|c|c}
\hline
\multirow{2}*{generator of $\overline{G}$} & \multirow{2}*{$\coho{1}{\overline{G}}{{}_{\overline{\varphi}}\mathcal{T}(L)}$} & generators & \multirow{2}*{generator of $\overline{G}$} & \multirow{2}*{$\coho{1}{\overline{G}}{{}_{\overline{\varphi}}\mathcal{T}(L)}$} & generators \\
& & of cohomology & & &of cohomology \\ \hline \hline
&&&&& \\[-.8em]
$\begin{pmatrix} -1&0&0 \\ 0&0&-1 \\ 0&-1&0 \end{pmatrix}$ & \multirow{7}*{$\brau{k}{L}$} & \multirow{6}*{$c(a)_s = (a,1,1)$} & $\begin{pmatrix} 1&0&0 \\ 0&0&1 \\ 0&1&0 \end{pmatrix}$ & \multirow{7}*{trivial} & \multirow{7}*{trivial} \\[2em] \cline{0-0} \cline{4-4}
&&&&& \\[-.8em]
$\begin{pmatrix} -1&0&0 \\ 0&0&1 \\ 0&1&0 \end{pmatrix}$ & & \multirow{2}*{for each $a \in k^\times$} & $\begin{pmatrix} 1&0&0 \\ 0&0&-1 \\ 0&-1&0  \end{pmatrix}$ & & \\[2em] \cline{0-0} \cline{4-4}
&&&&& \\[-.8em] 
$\begin{pmatrix} 0&0&-1 \\ 0&-1&0 \\ -1&0&0 \end{pmatrix}$ & & & $\begin{pmatrix} 0&0&1 \\ 0&1&0 \\ 1&0&0 \end{pmatrix}$ & & \\[2em] \hline
\end{tabular}}
\end{table}
\end{center}
\vspace{-2em}
\begin{proof}
Consider when $s$ is given as follows with submatrices $A$ and $B$.
\[
\footnotesize{ s=\begin{pmatrix}-1 &0&0 \\ 0&0&1 \\ 0&1&0 \end{pmatrix},\, A=-1, \,\,B=\begin{pmatrix} 0&1 \\ 1&0 \end{pmatrix}}
\]
Let $\pi:\overline{G} \rightarrow \text{Aut}(\mathbb{Z})$ be a group homomorphism defined by $\pi(s)=A$ and $\psi:\overline{G} \rightarrow \text{GL}(2,\mathbb{Z})$ be a group homomorphism defined by $\psi(s)=B$. By Theorem \ref{main thm of 3.3}-(1), we have $\coho{1}{\overline{G}}{{}_{\overline{\varphi}}\mathcal{T}(L)} \simeq \coho{1}{\overline{G}}{{}_{\pi}\mathcal{T}(L)} \oplus \coho{1}{\overline{G}}{{}_{\psi}\mathcal{T}(L)}$. Since $A$ is a diagonal matrix, we get $\coho{1}{\overline{G}}{{}_{\pi}\mathcal{T}(L)} \simeq \brau{k}{L}$ by Theorem \ref{main thm of 3.3}-(3). Via the injective map $\psi$, $\overline{G}$ is also considered as a subgroup of $\text{GL}(2,\mathbb{Z})$. We use Theorem \ref{induced 1} to compute $\coho{1}{\overline{G}}{{}_{\psi}\mathcal{T}(L)}$. Take a trivial subgroup $N$ of $\overline{G}$ and $\mathcal{U}$ be a $N$-integer with a trivial action. Since a matrix representation of $T_s$ with respect to $\{e_1=1\otimes1, \, e_2= s\otimes 1\}$ is same as $B$, we have $\coho{1}{\overline{G}}{{}_{\psi}\mathcal{T}(L)} \simeq \brau{L}{L}\simeq 1$. Applying Remark \ref{rmk}-(1) and (3), generators in the table can be obtained. Since every matrices of the left side are similar, same analysis holds.

Consider when $s$ is given as follows with submatrices $A$ and $B$.
\[\footnotesize{
s= \begin{pmatrix}1&0&0 \\ 0&0&1 \\ 0&1&0 \end{pmatrix},\,\, A=1,\,\, B=\begin{pmatrix}0&1 \\ 1&0 \end{pmatrix}}
\]
By Theorem \ref{main thm of 3.3}-(2), $\coho{1}{\overline{G}}{{}_{\overline{\varphi}}\mathcal{T}(L)} \simeq \coho{1}{\overline{G}}{{}_{\psi}\mathcal{T}(L)}$ and the last term is trivial as the previous paragraph. Since every matrices of the right side are similar, same analysis holds.
\end{proof} 

\begin{lem}\label{lem 2}
Suppose that $\overline{G}=\operatorname{Gal}(L/k)\simeq \mathbb{Z}_3 \leq \operatorname{GL}(3,\mathbb{Z})$ and let $r$ be a generator of $\overline{G}$. Then,
\end{lem}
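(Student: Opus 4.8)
The plan is to split the analysis according to the conjugacy class of the order-three generator $r$, in the same spirit as the proof of Lemma \ref{lem 1}; since $\coho{1}{\overline{G}}{{}_{\overline{\varphi}}\mathcal{T}(L)}$ depends only on the conjugacy class of $r$ in $\operatorname{GL}(3,\mathbb{Z})$, it suffices to treat one representative of each class occurring in the table. Two shapes arise: the cyclic permutation matrix (the rotation of $W_9,W_{10}$), and the block-diagonal rotation of $W_5,\dots,W_8$, whose $(1,1)$-entry equals $1$ and whose lower-right $2\times 2$ block $B$ is the companion matrix of $x^2+x+1$, an order-three element of $\operatorname{GL}(2,\mathbb{Z})$ in the class $C_3$ of Table 2.

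For the permutation matrix I would recognize ${}_{\overline{\varphi}}\mathcal{T}(L)$ as the induced module $\ind{\{1\}}{\overline{G}}{\mathcal{U}\otimes L^\times}$, taking $N=\{1\}$ and $\mathcal{U}=\mathbb{Z}$ with the trivial action. In the basis $\{1\otimes1,\,r\otimes1,\,r^2\otimes1\}$ of $\ind{\{1\}}{\overline{G}}{\mathcal{U}}$ the operator $T_r$ is left multiplication by $r$, hence its matrix is exactly the permutation matrix; thus the similarity hypothesis of Theorem \ref{induced 1} holds with $T$ the identity. As $I=N=\{1\}$, part (2) of that theorem gives $\coho{1}{\overline{G}}{{}_{\overline{\varphi}}\mathcal{T}(L)}\simeq \brau{L^N}{L^I}=\brau{L}{L}$, which is trivial; this is the ``trivial'' entry of the table.

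For the block-diagonal rotation I would apply Theorem \ref{main thm of 3.3}-(1) to the splitting into the $1\times1$ block $A=1$ and the $2\times2$ block $B$, obtaining $\coho{1}{\overline{G}}{{}_{\overline{\varphi}}\mathcal{T}(L)}\simeq \coho{1}{\overline{G}}{{}_{\pi}\mathcal{T}(L)}\oplus \coho{1}{\overline{G}}{{}_{\psi}\mathcal{T}(L)}$ with $\pi(r)=A$ and $\psi(r)=B$. The first summand is $\coho{1}{\overline{G}}{L^\times}$, which vanishes by Hilbert's Theorem 90 (equivalently Theorem \ref{main thm of 3.3}-(3) for the trivial $G$-integer). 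For the second I would regard $\overline{G}$ as the subgroup $C_3\le\operatorname{GL}(2,\mathbb{Z})$ via $\psi$ and invoke the $C_3$ row of Table 5, that is Theorem \ref{induced 2}-(2), which yields $\coho{1}{\overline{G}}{{}_{\psi}\mathcal{T}(L)}\simeq\brau{k}{L}=k^\times/\operatorname{Im}\norm{L}{k}{}$ for the cyclic extension $L/k$. Summing, $\coho{1}{\overline{G}}{{}_{\overline{\varphi}}\mathcal{T}(L)}\simeq\brau{k}{L}$, and transporting the generator of the $\psi$-block through the isomorphism of Remark \ref{rmk}-(1) produces the stated cocycles $c(a)_r=(1,1,a^{-1})$, $a\in k^\times$.

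The main obstacle is the $C_3$-block: the $2\times2$ torus ${}_{\psi}\mathcal{T}(L)$ is the norm-one torus of $L/k$, whose first cohomology is nonzero, in sharp contrast to the order-two reflection block of Lemma \ref{lem 1} (a regular-representation torus with vanishing $H^1$). Here Theorem \ref{induced 1} does not apply and one must run the long exact sequence (\ref{second seq}) of Theorem \ref{induced 2}, verifying that the hypothesis $N\subseteq I$ holds so that $\ker p^1$ is trivial and $\coho{1}{\overline{G}}{{}_{\psi}\mathcal{T}(L)}\simeq\operatorname{im}\delta\simeq\brau{k}{L}$. The only genuinely computational point is then matching the resulting two-dimensional cocycle with the three-dimensional generator $c(a)_r=(1,1,a^{-1})$ via Remark \ref{rmk}.
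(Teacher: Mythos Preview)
Your approach is essentially the same as the paper's: treat the two conjugacy classes of order-three elements separately, handling the permutation matrix via Theorem \ref{induced 1} with $N$ trivial and reducing the block-diagonal case to the $2\times 2$ block $C_3$, then invoking Theorem \ref{induced 2}. The paper uses Theorem \ref{main thm of 3.3}-(2) rather than (1) for the block reduction, but since your first summand vanishes by Hilbert 90 the outcome is identical, and your use of Remark \ref{rmk}-(1) in place of \ref{rmk}-(2) yields the same generator $(1,1,a^{-1})$.

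One small slip: for the permutation case, in the basis $\{1\otimes1,\,r\otimes1,\,r^2\otimes1\}$ the operator $T_r$ has matrix $\left(\begin{smallmatrix}0&0&1\\1&0&0\\0&1&0\end{smallmatrix}\right)$, which is the \emph{inverse} of the generator $r=\left(\begin{smallmatrix}0&1&0\\0&0&1\\1&0&0\end{smallmatrix}\right)$ from Table~1, so $T$ cannot be the identity. The paper fixes this by ordering the basis as $\{1\otimes1,\,r^2\otimes1,\,r\otimes1\}$; equivalently you may take $T$ to be the transposition of the last two coordinates. This does not affect the conclusion, since the two matrices are similar over $\mathbb{Z}$ and Theorem \ref{induced 1} still applies.
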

\begin{center}
\begin{table}[ht]
\hspace*{-3em}
\footnotesize{
\begin{tabular}{c|c|c|c|c|c}
\hline
\multirow{2}*{generator of $\overline{G}$} & \multirow{2}*{$\coho{1}{\overline{G}}{{}_{\overline{\varphi}}\mathcal{T}(L)}$} & generators & \multirow{2}*{generator of $\overline{G}$} & \multirow{2}*{$\coho{1}{\overline{G}}{{}_{\overline{\varphi}}\mathcal{T}(L)}$} & generators \\
& & of cohomology & & & of cohomology \\ \hline \hline
\multirow{4}*{$\begin{pmatrix}1&0&0 \\ 0&0&-1 \\ 0&1&-1\end{pmatrix}$} & \multirow{4}*{$\brau{k}{L}$} & \multirow{3}*{$c(a)_r = (1,1,a^{-1})$} & \multirow{4}*{$\begin{pmatrix}0&1&0 \\ 0&0&1 \\ 1&0&0\end{pmatrix}$} & \multirow{4}*{trivial} & \multirow{4}*{trivial} \\ 
& & \multirow{3}*{for each $a \in k^\times$} & & & \\[2.5em] \hline
\end{tabular}}
\end{table}
\end{center}
\vspace{-2em}
\begin{proof}
Consider when $r$ is given as follows with submatrices $A$ and $B$.
\[\footnotesize{
r= \begin{pmatrix} 1&0&0 \\ 0&0&-1 \\ 0&1&-1 \end{pmatrix},\,\, A=1,\, B=\begin{pmatrix} 0&-1 \\ 1&-1 \end{pmatrix}}
\]
Let $\psi:\overline{G} \rightarrow \text{GL}(2,\mathbb{Z})$ be a group homomorphism defined by $\psi(s)=B$. By Theorem \ref{main thm of 3.3}-(2), we have $\coho{1}{\overline{G}}{{}_{\overline{\varphi}}\mathcal{T}(L)} \simeq \coho{1}{\overline{G}}{{}_{\psi}\mathcal{T}(L)}$. Via the injective map $\psi$, $\overline{G}$ is also considered as a subgroup of $\text{GL}(2,\mathbb{Z})$. Now, we use Theorem \ref{induced 2} to compute $\coho{1}{\overline{G}}{{}_{\psi}\mathcal{T}(L)}$. Take a trivial subgroup $N$ of $\overline{G}$ and a $\overline{G}$-integer $\mathcal{U}$ with a trivial action. A matrix representation of $T_r$ with respect to $\{e_1=r^i\otimes 1 \,|\, i=0,1,2\}$ is given as follows. By observing $T^{-1}T_rT$, we check the similarity condition of Theorem \ref{induced 2}.
\[\footnotesize{
T_r = \begin{pmatrix} 0&0&1 \\ 1&0&0 \\ 0&1&0 \end{pmatrix}, \, T=\begin{pmatrix} 1&-1&1 \\ 0&0&-1 \\ 0&1&0 \end{pmatrix},\, T^{-1}T_rT = \begin{pmatrix} 1&0&0 \\ 0&0&-1 \\ -1&1&-1 \end{pmatrix}}
\]
By Theorem \ref{induced 2}-(2), we get $\{c(a)\,|\,a\in k^\times\}$ as a collection of generators of $\coho{1}{\overline{G}}{{}_{\overline{\varphi}}\mathcal{T}(L)}$ where $c(a)_r = (1,a^{-1})$. A calculation shows that $c(a)$ is cohomologous to $c(a^{\prime})$ iff $a^{\prime}=a\norm{K}{k}{(x)}$ for some $x\in K^\times$. Therefore, the natural map $c(a) \mapsto \overline{a}$ gives the relation $\coho{1}{\overline{G}}{{}_{\psi}\mathcal{T}(L)}\simeq \brau{k}{L}$ where $\overline{\phantom{a}}$ denotes a representative of an element of the Brauer group. By applying Remark \ref{rmk}-(2), generators of $\coho{1}{\overline{G}}{{}_{\overline{\varphi}}\mathcal{T}(L)}$ in the table can be obtained.

Now, consider the second case. For this case, we use Theorem \ref{induced 1}. Take a trivial subgroup $N$ of $\overline{G}$ and an $N$-integer $\mathcal{U}$ with a trivial action. A matrix representation of $T_r$ with respect to $\{e_1=1\otimes 1 , e_2=r^2\otimes 1, e_3 = r\otimes 1\}$ is same as the second matrix. Hence, $\coho{1}{\overline{G}}{{}_{\overline{\varphi}}\mathcal{T}(L)}\simeq \brau{L}{L}$ is trivial.
\end{proof}

\begin{lem}\label{lem 3}
Let $W_i$ be the set of generators in Table 1. Suppose that $\overline{G}=\operatorname{Gal}(L/k)\simeq D_6 \leq \operatorname{GL}(3,\mathbb{Z})$. When $\overline{G}$ is generated by $W_i$, $r$ denotes the first matrix of $W_i$ and and $s$ denotes the second matrix of $W_i$. Then, we have
\begin{center}
\begin{table}[ht]
\footnotesize{
\begin{tabular}{c|c|c|c|c|c}
\hline
$\operatorname{conj}$ $\operatorname{class}$ & $\coho{1}{\overline{G}}{{}_{\overline{\varphi}}\mathcal{T}(L)}$ & $\operatorname{conj}$ $\operatorname{class}$ & $\coho{1}{\overline{G}}{{}_{\overline{\varphi}}\mathcal{T}(L)}$ & $\operatorname{conj}$ $\operatorname{class}$ & $\coho{1}{\overline{G}}{{}_{\overline{\varphi}}\mathcal{T}(L)}$ \\ \hline \hline
$W_5$ & $\operatorname{Br}(k|L^{\left<r\right>}) \oplus \operatorname{Br}(k|L^{\left<s\right>})$ & $W_6$ & $M$ & $W_7$ & $\operatorname{Br}(k|L^{\left<r\right>})\oplus M$ \\ \hline
$W_8$ & $\operatorname{Br}(k|L^{\left<s\right>})$ & $W_9$ & $\operatorname{Br}(L^{\left<s\right>}|L)$ & $W_{10}$ & $\operatorname{trivial}$ \\ \hline  
\end{tabular}}
\end{table}
\end{center} \vspace{-2em}
where $M = \displaystyle{\frac{\{a\in L^\times \, | \, r(a)=a,s(a)=a^{-1}\}}{\{\norm{L}{L^{\left<r\right>}}{(b)}\,|\,b \in L^\times \,\, \text{with} \,\, \norm{L}{L^{\left<sr\right>}}{(b)}=1\}}}$.
\end{lem}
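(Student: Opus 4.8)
The plan is to treat the six conjugacy classes in two families according to the shape of the rotation $r$. For $W_9$ and $W_{10}$ the rotation is the $3$-cycle permutation matrix, and in both cases ${}_{\overline{\varphi}}\mathcal{T}(L)$ is an honest induced module, so I would apply Theorem \ref{induced 1} directly. For $W_5,\dots,W_8$ the rotation is block diagonal, $r=\operatorname{diag}(1,\rho)$ with $\rho=\left(\begin{smallmatrix}0&-1\\1&-1\end{smallmatrix}\right)$ of order three, and each corresponding reflection $s$ in Table 1 is likewise block diagonal, $s=\operatorname{diag}(\epsilon,\tau)$ with $\epsilon\in\{1,-1\}$ and $\tau\in\{J,-J\}$, $J=\left(\begin{smallmatrix}0&1\\1&0\end{smallmatrix}\right)$; here I would peel off the one-dimensional piece using Theorem \ref{main thm of 3.3}-(1).

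For $W_9$, taking $N=\langle s\rangle$ and the $N$-integer $\mathcal{U}$ with $s\cdot 1=-1$ reproduces verbatim the computation of Example \ref{induced 1 exa}: the matrices of $T_r,T_s$ in a suitable basis are simultaneously similar to $r,s$, so Theorem \ref{induced 1} applies with canonical subgroup $I=\{1\}$ and gives $\operatorname{Br}(L^{\langle s\rangle}\mid L)$. For $W_{10}$ the same $N=\langle s\rangle$ but with the trivial $N$-integer yields $I=N$, so Theorem \ref{induced 1} returns $\operatorname{Br}(L^{\langle s\rangle}\mid L^{\langle s\rangle})$, i.e.\ the trivial group. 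The cocycle generators are read off from Corollary \ref{generator form}.

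For $W_5,\dots,W_8$ the hypothesis $C_i=0$ of Theorem \ref{main thm of 3.3}-(1) holds, giving
\[
\coho{1}{\overline{G}}{{}_{\overline{\varphi}}\mathcal{T}(L)}\;\simeq\;\coho{1}{\overline{G}}{{}_{\pi}\mathcal{T}(L)}\oplus\coho{1}{\overline{G}}{{}_{\psi}\mathcal{T}(L)},
\]
where $\pi$ records the diagonal signs $r\mapsto 1,\ s\mapsto\epsilon$ and $\psi$ records the blocks $r\mapsto\rho,\ s\mapsto\tau$. The first summand is evaluated by Theorem \ref{main thm of 3.3}-(3): it is $\operatorname{Br}(k\mid L^{\langle r\rangle})$ when $\epsilon=-1$ (the classes $W_5,W_7$) and trivial when $\epsilon=1$ (the classes $W_6,W_8$). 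The second summand is the cohomology of a two-dimensional $D_6$-torus; there are exactly two $\operatorname{GL}(2,\mathbb{Z})$-conjugacy classes of $D_6$, distinguished by $\tau=J$ against $\tau=-J$. The $J$-type ($W_6,W_7$) is precisely the torus of Example \ref{induced 2 exa 2} and contributes $M$ through Corollary \ref{cor of induced 2}, while the $-J$-type ($W_5,W_8$) I would treat by Theorem \ref{induced 2}-(2) (the $D_6'$ entry of Table 5) and obtain $\operatorname{Br}(k\mid L^{\langle s\rangle})$. Adding the two summands in each of the four cases reproduces the tabulated entries.

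I expect the main obstacle to be the $-J$-type two-dimensional block. It is not an induced module: inducing the only admissible integer from the index-two subgroup $\langle r\rangle$ forces $T_r=\mathrm{id}$ instead of an order-three rotation, so Theorem \ref{induced 1} is unavailable and one must instead realize ${}_{\psi}\mathcal{T}(L)$ as a direct summand of an induced module and extract $\operatorname{im}\,\delta$ from the exact sequence of Theorem \ref{induced 2}, with the norm-kernel bookkeeping of Corollary \ref{cor of induced 2}. A secondary but genuine point of care is to match each $W_i$ to the correct $\operatorname{GL}(2,\mathbb{Z})$-class of its block; once the identifications are fixed, the remaining work consists of the routine similarity checks and coboundary computations already illustrated in the examples of Section 3.
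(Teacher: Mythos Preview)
Your proposal is correct and follows essentially the same route as the paper's own proof: Theorem \ref{induced 1} handles $W_9$ and $W_{10}$ directly, while for $W_5$--$W_8$ you block-decompose via Theorem \ref{main thm of 3.3}-(1), evaluate the $1\times 1$ piece by Theorem \ref{main thm of 3.3}-(3), and treat the $2\times 2$ piece through Theorem \ref{induced 2} (the $J$-type via Example \ref{induced 2 exa 2}, the $-J$-type via the trivial $G$-integer as in the paper's handling of $W_5$). One small simplification: for the $-J$-type block the $G$-integer $\mathcal{U}$ is trivial, so $N\subset I=G$ and Theorem \ref{induced 2}-(2) already yields $\coho{1}{\overline{G}}{{}_{\psi}\mathcal{T}(L)}=\operatorname{im}\delta$ in full---the norm-kernel machinery of Corollary \ref{cor of induced 2} is not needed there.
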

\begin{proof}
When $\overline{G}$ is generated by $W_{10}$, we showed $\coho{1}{\overline{G}}{{}_{\overline{\varphi}}\mathcal{T}(L)}\simeq \brau{L^{\left<s\right>}}{L}$ in Example \ref{induced 1 exa}. Consider when $\overline{G}$ is generated by $W_9$. As in the case of $W_{10}$, we can show $\coho{1}{\overline{G}}{{}_{\overline{\varphi}}\mathcal{T}(L)}$ is trivial by using Theorem \ref{induced 1} in the setting of $N=\{1,s\}$ and an $N$-integer $\mathcal{U}$ with a trivial action. When $\overline{G}$ is generated by $W_7$, we showed $\coho{1}{\overline{G}}{{}_{\overline{\varphi}}\mathcal{T}(L)}\simeq \brau{k}{L^{\left<r\right>}}\oplus M$ in Example \ref{exa2 of 3.3}. Consider when $\overline{G}$ is generated by $W_6$. By Theorem \ref{main thm of 3.3}-(2), $\coho{1}{\overline{G}}{{}_{\overline{\varphi}}\mathcal{T}(L)}$ reduced to the Galois cohomology in Example \ref{induced 2 exa 2}, so it is isomorphic to $M$. Now, consider when $\overline{G}$ is generated by $W_5$. Let $A_1$, $A_2$, $B_1$ and $B_2$ be submatrices of $r$ and $s$ as follows. 
\[\footnotesize{
A_1=1,\,\, A_2=-1,\,\, B_1=\begin{pmatrix} 0& -1 \\ 1&-1 \end{pmatrix}, \,\, B_2 = \begin{pmatrix} 0&-1 \\ -1&0 \end{pmatrix}}
\]
Let $\pi:\overline{G} \rightarrow \text{Aut}(\mathbb{Z})=\{1,-1\}$ be a group homomorphism that sends $r$ to $A_1$ and $s$ to $A_2$ and $\psi: \overline{G} \rightarrow \text{GL}(2,\mathbb{Z})$ be a group homomorphism that sends $r$ to $B_1$ and $s$ to $B_2$. By Theorem \ref{main thm of 3.3}, we have $\coho{1}{\overline{G}}{{}_{\overline{\varphi}}\mathcal{T}(L)} \simeq \coho{1}{\overline{G}}{{}_{\pi}\mathcal{T}(L)}\oplus \coho{1}{\overline{G}}{{}_{\psi}\mathcal{T}(L)} \simeq \brau{k}{L^{\left<r\right>}}\oplus \coho{1}{\overline{G}}{{}_{\psi}\mathcal{T}(L)}$. Now, we compute $\coho{1}{\overline{G}}{{}_{\psi}\mathcal{T}(L)}$ by Theorem \ref{induced 2}. Let $N=\{1,s\}$ be a subgroup of $\overline{G}$ and $\mathcal{U}$ be a $\overline{G}$-integer with a trivial action. Matrix representations of $T_r$ and $T_s$ with respect to $\{e_i=r^{i-1}\otimes 1 \, | \, i=1,2,3\}$ are given as follows.
\[\footnotesize{
T_r = \begin{pmatrix}0&0&1 \\ 1&0&0 \\ 0&1&0 \end{pmatrix},\, T_s = \begin{pmatrix} 1&0&0 \\ 0&0&1 \\ 0&1&0\end{pmatrix},\,T=\begin{pmatrix} 1&-1&1 \\ 0&0&-1 \\ 0&1&0 \end{pmatrix}}
\]
By observing the forms of $T^{-1}T_rT$ and $T^{-1}T_sT$, we check the similarity condition of Theorem \ref{induced 2}.
\[\footnotesize{
T^{-1}T_rT = \begin{pmatrix}1&0&0 \\ 0&0&-1 \\ -1&1&-1 \end{pmatrix}, T^{-1}T_sT = \begin{pmatrix}1&0&0 \\ 0&0&-1 \\ 0&-1&0 \end{pmatrix}}
\]
By Theorem \ref{induced 2}-(2), we get $\{c(a)\,|\, a \in k^\times\}$ as a collection of generators of $\coho{1}{\overline{G}}{{}_{\psi}\mathcal{T}(L)}$ where $c(a)_r=(1,a^{-1})$ and $c(a)_s=(1,1)$. A calculation shows that $c(a)$ is cohomologous $c(a^{\prime})$ iff $a^{\prime}=a\norm{K^{\left<s\right>}}{k}{(x)}$ for some $x \in (K^{\left<s\right>})^\times$. Therefore, the natural map $c(a) \mapsto \overline{a}$ gives $\coho{1}{\overline{G}}{{}_{\varphi}\mathcal{T}(K)} \simeq \brau{k}{K}$ where $\overline{\phantom{a}}$ denotes a representative of an element in the Brauer group. Hence, $\coho{1}{\overline{G}}{{}_{\overline{\varphi}}\mathcal{T}(L)} \simeq \brau{k}{L^{\left<r\right>}}\oplus \brau{k}{L^{\left<s\right>}}$. The similar analysis can be applied when $\overline{G}$ is generated by $W_7$.
\end{proof}

\bibliographystyle{plain}
\bibliography{reference}

\end{document}